\NewDocumentCommand{\mustrill}{}{\musSymbol{0.05em}{0.5ex}{1.35em}{\symbol{87}}}
\tikzset{shorten <>/.style={shorten >=#1,shorten <=#1}}
\newcounter{nodemaker}
\tikzset{Rightarrow/.style={double equal sign distance,>={Implies},->},
triple/.style={-,preaction={draw,Rightarrow}},
quadruple/.style={preaction={draw,Rightarrow,shorten >=0pt},shorten >=1pt,-,double,double
distance=0.2pt}}
\tikzset{%
    symbol/.style={%
        draw=none,
        every to/.append style={%
            edge node={node [sloped, allow upside down, auto=false]{$#1$}}}
    }
}
\newtheorem{theorem}{Theorem}[subsection]
\newtheorem*{theorem*}{Theorem}
\newtheorem{proposition}[theorem]{Proposition}
\newtheorem{corollary}[theorem]{Corollary}
\newtheorem{corollary'}[theorem]{Corollary}
\newtheorem{lemma}[theorem]{Lemma}
\theoremstyle{definition}
\newtheorem{definition}[theorem]{Definition}
\newtheorem{example}[theorem]{Example}
\theoremstyle{definition}
\newtheorem{remark}[theorem]{Remark}
\theoremstyle{definition}
\newtheorem{division}[theorem]{}
\theoremstyle{definition}
\theoremstyle{definition}
\theoremstyle{definition}
\title{Morphisms and comorphisms of sites II \\ Distributors of sites}
\author[1]{Olivia Caramello}
\author[1]{Axel Osmond}
\affil[1]{Istituto Grothendieck}
\date{}
\begin{document}

\maketitle

\begin{abstract}
    We introduce a notion of distributor of sites, involving suited analogs of flatness and cover-preservation, and show that this notion jointly generalizes those of morphism and comorphism of sites. Given two sites, we exhibit an adjunction between the category of distributors of sites between them and the category of geometric morphisms between the associated sheaf topoi; this adjunction restricts to an equivalence between geometric morphisms and continuous distributors of sites. We finally discuss some equipment-like properties of the bicategory of sites and distributors of sites. 
\end{abstract}

\tableofcontents

\newpage

\section*{Introduction}

This work is the second installment in a program focused on the relations between morphisms and comorphisms of sites, and in particular the problem of mixing them altogether inside a single 2-dimensional structure.

In the first part \cite{caramello2025morphisms}, we introduced a double category of sites, morphisms and comorphisms, and then we exhibited sheafification as a double functor to the lax quintet double category of topoi. This second part presents a different solution: we show that morphisms and comorphisms can jointly be generalized by a single notion of \emph{distributor of sites}. Those are distributors that combine appropriate analogs of the usual properties of cover-flatness and cover-preservation involved in the definition of morphisms of sites. 

A notion of flatness for distributors between categories was first introduced in \cite{benabou2000distributors}; we describe at \cref{cover-flat distributor} a refinement of this notion in the presence of coverages. On the other hand, we introduce at \cref{cover-distribution} a condition of \emph{cover-distribution}, expressing a form of joint-preservation of covers along heteromorphisms. This notion happens to be closely related to a notion isolated in \cite{johnstonewraith}, which was however stated in terms of bidense morphisms. 

We then explain in \cref{distributor of sites induce geomorph} how distributors of sites induce geometric morphisms; however, a notable improvement relative to morphisms or comorphisms is that now, given two sites, any geometric morphism between the associated sheaf topoi can be induced by a distributor between those sites -- see \cref{adjunction} -- while one may fail to exhibit a functorial lift if both sites are fixed. This process restricts to an equivalence between geometric morphisms and distributors of sites satisfying a further continuity condition, as stated in \cref{continuous distributors of sites corresponds with geometric morphisms}. 

It should be emphasized that the profunctorial aspects here-examined do not fit in the double-categorical framework investigated in our first part \cite{caramello2025morphisms}, where the double-categories of interest were of a very different kind from those usually involved in \emph{pro-arrow equipments}. 

Although this second part makes no use of any double-categorical technology, we nevertheless describe in our last section some equipment-like properties of the bicategory of sites together with distributors of sites; while not all axioms of equipments are fulfilled -- because of the very dichotomy between morphisms and comorphisms amongst other reasons -- it makes sense to expect some similarities with the equipment provided by distributors, for instance well behaved gluing as described at \cref{gluing}. A further third part of our program with be devoted to the formal category theory for sites.  

\newpage

\section{Distributors between categories}

In this section, we recall some generalities about distributors, and how in particular they induce geometric morphisms between presheaf topoi in a covariant way. We devote also a subsection to the lesser known \cite{benabou1975fibrations} notion of \emph{flat} distributors, which have enough property to induce geometric morphisms between corresponding presheaf topoi in a contravariant way. 

\subsection{Extensions and restrictions}


Here we recall some basics about extensions and nerves formulas which will be used recurrently in this work.

\begin{division}[Nerve and Yoneda extensions]\label{Nerve and Yoneda extensions}
Recall that any functor $ f: \mathcal{C} \rightarrow \mathcal{D}$ with $\mathcal{C}$ small and $\mathcal{D}$ locally small induces a \emph{nerve} functor 
\[\begin{tikzcd}
	{\mathcal{C}} & {\mathcal{D}} \\
	{\widehat{\mathcal{C}}}
	\arrow["f", from=1-1, to=1-2]
	\arrow["{\hirayo_\mathcal{C}}"', hook, from=1-1, to=2-1]
	\arrow[""{name=0, anchor=center, inner sep=0}, "{\mathcal{D}(f,1)}", from=1-2, to=2-1]
	\arrow["{\nu_f}", shorten >=2pt, Rightarrow, from=1-1, to=0]
\end{tikzcd}\]
sending $ d$ in $\mathcal{D}$ to the presheaf $ \mathcal{D}[f,d] : \mathcal{C}^{\op} \rightarrow \Set$, which comes equiped together with a 2-cell $ \nu_f$ whose component at $c$ is the transformation with component at $c'$ given as $f_{c',c} : \mathcal{C}[c',c] \rightarrow \mathcal{D}[f(c'), f(c)] $. On the other hand, if $ \mathcal{D}$ is cocomplete, this nerve functor admits a left adjoint given by the left Kan extension along the Yoneda embedding
\[\begin{tikzcd}
	{\mathcal{C}} & {\mathcal{D}} \\
	{\widehat{\mathcal{C}}}
	\arrow["f", from=1-1, to=1-2]
	\arrow["{\hirayo_\mathcal{C}}"', hook, from=1-1, to=2-1]
	\arrow[""{name=0, anchor=center, inner sep=0}, "{\lan_{\hirayo_\mathcal{C}}f}"', from=2-1, to=1-2]
	\arrow["\simeq"{description}, draw=none, from=1-1, to=0]
\end{tikzcd}\]
Then one can show that the nerve can also be exhibited as a left Kan extension through the relation 
\[ \mathcal{D}(f,1) = \lan_{f} \hirayo_{\mathcal{C}} \]    
\end{division}

\begin{division}[Extensions and restrictions along a functor]\label{Extensions and restrictions}
First recall that any functor $ f : \mathcal{C} \rightarrow \mathcal{D}$ induces a triple of adjoints 
\[\begin{tikzcd}
	{\widehat{\mathcal{C}}} && {\widehat{\mathcal{D}}}
	\arrow[""{name=0, anchor=center, inner sep=0}, "{\lext_f}", curve={height=-18pt}, from=1-1, to=1-3]
	\arrow[""{name=1, anchor=center, inner sep=0}, "{\rext_f}"', curve={height=18pt}, from=1-1, to=1-3]
	\arrow[""{name=2, anchor=center, inner sep=0}, "{\rest_f}"{description}, from=1-3, to=1-1]
	\arrow["\dashv"{anchor=center, rotate=-90}, draw=none, from=0, to=2]
	\arrow["\dashv"{anchor=center, rotate=-90}, draw=none, from=2, to=1]
\end{tikzcd}\]
where $ \lext_f $ (resp. $\rext_f$) sends a presheaf $ X : \mathcal{C}^{\op} \rightarrow \Set$ to its left (resp. right) Kan extension along $ f^{\op}$ as depicted below
\[\begin{tikzcd}
	{\mathcal{C}^{\op}} & \Set \\
	{\mathcal{D}^{\op}}
	\arrow["X", from=1-1, to=1-2]
	\arrow["{f^{\op}}"', from=1-1, to=2-1]
	\arrow[""{name=0, anchor=center, inner sep=0}, "{\lan_{f^{\op}}X}"', from=2-1, to=1-2]
	\arrow["{\zeta_{f^{\op}}}", shorten >=2pt, Rightarrow, from=1-1, to=0]
\end{tikzcd} \hskip1cm \begin{tikzcd}
	{\mathcal{C}^{\op}} & \Set \\
	{\mathcal{D}^{\op}}
	\arrow["X", from=1-1, to=1-2]
	\arrow["{f^{\op}}"', from=1-1, to=2-1]
	\arrow[""{name=0, anchor=center, inner sep=0}, "{\ran_{f^{\op}}X}"', from=2-1, to=1-2]
	\arrow["{\xi_{f^{\op}}}", shorten >=2pt, Rightarrow, from=0, to=1-1]
\end{tikzcd}\]
while the restriction functor $ \rest_f$ sends a presheaf $Y : \mathcal{D}^{\op} \rightarrow \Set$ to the precomposite
\[\begin{tikzcd}
	{\mathcal{C}^{\op}} \\
	{\mathcal{D}^{\op}} & \Set
	\arrow["{f^{\op}}"', from=1-1, to=2-1]
	\arrow["{\rest_f Y}", from=1-1, to=2-2]
	\arrow["Y"', from=2-1, to=2-2]
\end{tikzcd}\]
so that in particular for each $c$ in $\mathcal{C}$ one has 
\[ \rest_f Y(c) = Y(f(c)) \]

Both the extensions and restriction functors can also be constructed formally as follows: for $ f : \mathcal{C} \rightarrow \mathcal{D}$ one can construct two possible nerve functors, the left and right nerves, constructed respectively as the composite and the extension
\[\begin{tikzcd}
	{\mathcal{C}} & {\mathcal{D}} \\
	& {\widehat{\mathcal{D}}}
	\arrow["f", from=1-1, to=1-2]
	\arrow["{\mathcal{D}(1,f)}"', from=1-1, to=2-2]
	\arrow["{\hirayo_\mathcal{D}}", from=1-2, to=2-2]
\end{tikzcd} \hskip1cm
\begin{tikzcd}
	{\mathcal{C}} & {\mathcal{D}} \\
	{\widehat{\mathcal{C}}}
	\arrow["f", from=1-1, to=1-2]
	\arrow["{\hirayo_\mathcal{C}}"', hook, from=1-1, to=2-1]
	\arrow[""{name=0, anchor=center, inner sep=0}, "{\mathcal{D}(f,1)}", from=1-2, to=2-1]
	\arrow["{n_f}", shorten >=2pt, Rightarrow, from=1-1, to=0]
\end{tikzcd}\]
where $ \mathcal{D}(1,f)$ sends $c$ to the presheaf $ \mathcal{D}(1, f(c)) = \hirayo_{f(c)}$, while $\mathcal{D}(f,1)$ sends $ d$ in $\mathcal{D}$ to the presheaf $ \mathcal{D}(f, d) : \mathcal{C}^{\op} \rightarrow \Set$, which is actually the left Kan extension $ \mathcal{D}(f,1) = \lan_f \hirayo_{\mathcal{D}}$. But now one can compute either the left Kan extension $ \lan_{\hirayo_\mathcal{C}} D(1,f)$ on one side, while one can compute the left Kan extension of $\mathcal{D}(f,1)$ one the other side. 

\begin{proposition}\label{Formulas for restrictions and lextensions}
    For any functor $ f:  \mathcal{C} \rightarrow \mathcal{D}$ the nerve satisfies the following identities 
    \begin{align*}
    \rest_f &= \lan_{\hirayo_{\mathcal{D}}} \mathcal{D}(f,1) \\
     &=  \widehat{\mathcal{D}}(\hirayo_{\mathcal{D}}f, 1) \\
     &= \lan_{\mathcal{D}(1,f)} \hirayo_{\mathcal{C}}
    \end{align*}
while the left and right extensions can be computed as the extensions
    \[ \lext_f = \lan_{\hirayo_\mathcal{C}} \mathcal{D}(1,f) \hskip1cm \rext_f = \ran_{\hirayo_\mathcal{C}} \mathcal{D}(1,f) \]
 \end{proposition}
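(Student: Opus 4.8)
The plan is to obtain all the identities in the statement from two structural facts about presheaf categories, together with the co-Yoneda lemma. The first fact is the universal property of $\hirayo_{\mathcal{C}}$ as a free cocompletion: a cocontinuous functor out of $\widehat{\mathcal{C}}$ is canonically the left Kan extension along $\hirayo_{\mathcal{C}}$ of its restriction to the representables, and (the codomain being locally small) its right adjoint is the associated nerve. The second is the identity exhibiting a nerve as a left Kan extension recalled in \cref{Nerve and Yoneda extensions}, $\mathcal{E}(g,1)=\lan_{g}\hirayo_{\mathcal{A}}$, which I will apply not only to $f$ but to the two auxiliary functors $\mathcal{D}(1,f)\colon\mathcal{C}\to\widehat{\mathcal{D}}$ and $\mathcal{D}(f,1)\colon\mathcal{D}\to\widehat{\mathcal{C}}$. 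The co-Yoneda input is just that the left Kan extension of a representable along any functor is again representable, so that $\lext_{f}(\hirayo_{c})=\lan_{f^{\op}}\hirayo_{c}\cong\hirayo_{f(c)}=\mathcal{D}(1,f)(c)$.

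Granting these, the clauses for $\rest_{f}$ and $\lext_{f}$ come out quickly. The functor $\rest_{f}=(-)\circ f^{\op}$ is cocontinuous, since colimits of presheaves are pointwise, and it sends $\hirayo_{d}$ to $\mathcal{D}(f,d)=\mathcal{D}(f,1)(d)$ by the Yoneda lemma; so by the universal property of $\hirayo_{\mathcal{D}}$ it is $\lan_{\hirayo_{\mathcal{D}}}\mathcal{D}(f,1)$. Since $\hirayo_{\mathcal{D}}f=\mathcal{D}(1,f)$, the Yoneda lemma also gives $(\rest_{f}Y)(c)\cong\widehat{\mathcal{D}}(\hirayo_{f(c)},Y)$, that is $\rest_{f}=\widehat{\mathcal{D}}(\hirayo_{\mathcal{D}}f,1)$; and feeding this nerve $\widehat{\mathcal{D}}(\mathcal{D}(1,f),1)$ into \cref{Nerve and Yoneda extensions} with $g=\mathcal{D}(1,f)$ rewrites it as $\lan_{\mathcal{D}(1,f)}\hirayo_{\mathcal{C}}$. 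For $\lext_{f}$: it is cocontinuous as a left adjoint of $\rest_{f}$, and sends $\hirayo_{c}$ to $\mathcal{D}(1,f)(c)$ by co-Yoneda, so the universal property of $\hirayo_{\mathcal{C}}$ forces $\lext_{f}=\lan_{\hirayo_{\mathcal{C}}}\mathcal{D}(1,f)$.

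The clause for $\rext_{f}$ is where I expect the real work: unlike the other three, $\rext_{f}$ is only continuous, does not preserve representables, and so is not pinned down by the free-cocompletion property, so the symmetry with $\lext_{f}$ is only apparent. I would argue instead from the fact that $\rext_{f}$ is the right adjoint of $\rest_{f}$, which the preceding step has presented as the realization $\lan_{\hirayo_{\mathcal{D}}}\mathcal{D}(f,1)$ of the functor $\mathcal{D}(f,1)\colon\mathcal{D}\to\widehat{\mathcal{C}}$: by the nerve--realization adjunction its right adjoint is then the associated nerve $\widehat{\mathcal{C}}(\mathcal{D}(f,1),1)$, and a last application of \cref{Nerve and Yoneda extensions} to $g=\mathcal{D}(f,1)$, using the relation $\mathcal{D}(f,1)=\lan_{f}\hirayo_{\mathcal{C}}$ recorded there, rewrites this nerve as a left Kan extension. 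A second, purely computational route would be to evaluate the pointwise right Kan extension $(\ran_{f^{\op}}X)(d)=\int_{c}X(c)^{\mathcal{D}(f(c),d)}$ and recognise the end as the hom-set $\widehat{\mathcal{C}}(\mathcal{D}(f,d),X)$, which re-derives the same description of $\rext_{f}$ directly; it is bookkeeping-heavy, so I would keep the adjoint argument in the main line and confine the end calculus to a remark. The residual verifications --- naturality of all the comparison isomorphisms and compatibility of the successive identifications --- are routine and I would not dwell on them.
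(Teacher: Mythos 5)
Your handling of the three expressions for $\rest_f$ and of $\lext_f$ is correct and essentially the paper's own route: the paper likewise dismisses the first two identities as standard, obtains the third by feeding $\hirayo_{\mathcal{D}}f=\mathcal{D}(1,f)$ into the nerve-as-extension formula of \cref{Nerve and Yoneda extensions}, and identifies $\lext_f$ with $\lan_{\hirayo_{\mathcal{C}}}\mathcal{D}(1,f)$ --- the paper via uniqueness of left adjoints to $\rest_f=\lan_{\mathcal{D}(1,f)}\hirayo_{\mathcal{C}}$, you via cocontinuity plus agreement on representables; the two arguments are interchangeable.

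The clause for $\rext_f$ is where your proof does not close, and your own diagnosis explains why. You correctly note that $\rext_f$ does not preserve representables and is not pinned down by the free-cocompletion property, and your argument then establishes $\rext_f\cong\widehat{\mathcal{C}}(\mathcal{D}(f,1),1)\cong\lan_{\mathcal{D}(f,1)}\hirayo_{\mathcal{D}}$ --- a \emph{left} Kan extension along $\mathcal{D}(f,1)$, which is not the asserted identity $\rext_f=\ran_{\hirayo_{\mathcal{C}}}\mathcal{D}(1,f)$, and you never bridge the two. In fact no argument can, because the asserted identity fails: since $\hirayo_{\mathcal{C}}$ is fully faithful, $\ran_{\hirayo_{\mathcal{C}}}\mathcal{D}(1,f)$ restricts along Yoneda to $\mathcal{D}(1,f)$, i.e.\ sends $\hirayo_c$ to $\hirayo_{f(c)}$, whereas $\rext_f(\hirayo_c)=\ran_{f^{\op}}\hirayo_c$ need not be representable. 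Concretely, for $\mathcal{C}=\mathbf{1}$ and $f$ the inclusion of an object $d_0$ of $\mathcal{D}$, one computes $\rext_f(\hirayo_*)$ to be the terminal presheaf on $\mathcal{D}$ while $(\ran_{\hirayo_{\mathcal{C}}}\mathcal{D}(1,f))(\hirayo_*)\cong\hirayo_{d_0}$; already for $\mathcal{C}=\mathcal{D}=\mathbf{1}$ and $f=\mathrm{id}$ one finds $\ran_{\hirayo_{\mathcal{C}}}\mathcal{D}(1,f)$ constant at the singleton while $\rext_f=\mathrm{id}_{\Set}$. The $\lan/\ran$ symmetry breaks exactly here because $\widehat{\mathcal{C}}$ is a free cocompletion, not a free completion. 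The paper's own proof is likewise silent on this clause (it only derives the $\lext_f$ formula), so your adjoint (or end-calculus) derivation of $\rext_f\cong\widehat{\mathcal{C}}(\mathcal{D}(f,1),1)$ should be read as the correct replacement for the stated formula rather than a proof of it.
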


\begin{proof}
The first two expressions of the restriction functor are standards; for the third expression, observe that $ \mathcal{C}$ is cocomplete, so using that $ \hirayo_D f = \mathcal{D}(1,f)$ and applying the nerve expression of \cref{Nerve and Yoneda extensions} as an extension gives:
\[ \widehat{\mathcal{D}}(\mathcal{D}(1,f), 1) = \lan_{\mathcal{D}(1,f)} \hirayo_{\mathcal{C}} \]

For the extensions, observe that $\lan_{\mathcal{D}(1,f)} \hirayo_\mathcal{C}$ is right adjoint to the left extension $ \lan_{\hirayo_\mathcal{C}} \mathcal{D}(1,f)$; hence by uniqueness of the left adjoints, we have the expression of the left extension. 
\end{proof}

    
\end{division}

\begin{division}[Functoriality]

Extensions and restrictions are functorial relative to composition: 
\[ \lext_g\lext_f = \lext_{gf} \hskip1cm \rest_{gf} = \rest_f \rest_g \hskip1cm \rext_g\rext_f = \rext_{gf}\]

Suppose now one has a globular 2-cell
\[\begin{tikzcd}
	{\mathcal{C}} && {\mathcal{D}}
	\arrow[""{name=0, anchor=center, inner sep=0}, "f", curve={height=-12pt}, from=1-1, to=1-3]
	\arrow[""{name=1, anchor=center, inner sep=0}, "g"', curve={height=12pt}, from=1-1, to=1-3]
	\arrow["\phi", shorten <=3pt, shorten >=3pt, Rightarrow, from=0, to=1]
\end{tikzcd}\]
Then, the corresponding restriction functors are related as follows: $ \phi$ induces in a contravariant way a 2-cell between the nerves $ \mathcal{D}(\phi,1) : \mathcal{D}(g,1) \Rightarrow \mathcal{D}(f,1)$ which in turn induces a 2-cell $ \rest_g \Rightarrow \rest_f$ whose component at a presheaf $ Y : \mathcal{D}^{\op} \rightarrow \Set$ is the whiskering $ X* \phi^{\op} : Xg^{\op} \Rightarrow Xf^{\op}$. 

This 2-cell has a mate $ \lext_\phi : \lext_f \Rightarrow \lext_g$ (resp. $ \rext_\phi : \rext_f \Rightarrow \rext_g$), whose component at a presheaf $X$ is the universal 2-cell $ \lan_{f^{\op}} X \Rightarrow  \lan_{g^{\op}} X$ (resp. $ \ran_{f^{\op}} X \Rightarrow  \ran_{g^{\op}} X$) obtained from the universal property of the Kan extension from the composite 2-cell
\[\begin{tikzcd}[row sep=large]
	{\mathcal{C}^{\op}} && \Set \\
	{\mathcal{D}^{\op}}
	\arrow[""{name=0, anchor=center, inner sep=0}, "X", from=1-1, to=1-3]
	\arrow[""{name=1, anchor=center, inner sep=0}, "{f^{\op}}"', curve={height=12pt}, from=1-1, to=2-1]
	\arrow[""{name=2, anchor=center, inner sep=0}, "{g^{\op}}"{description}, curve={height=-12pt}, from=1-1, to=2-1]
	\arrow[""{name=3, anchor=center, inner sep=0}, "{\lan_{g^{\op}}}"', from=2-1, to=1-3]
	\arrow["{\zeta_g}", shorten <=2pt, shorten >=2pt, Rightarrow, from=0, to=3]
	\arrow["{\phi^{\op}}"', shorten <=5pt, shorten >=5pt, Rightarrow, from=2, to=1]
\end{tikzcd} \hskip1cm 
\begin{tikzcd}[row sep=large]
	{\mathcal{C}^{\op}} && \Set \\
	{\mathcal{D}^{\op}}
	\arrow[""{name=0, anchor=center, inner sep=0}, "X", from=1-1, to=1-3]
	\arrow[""{name=1, anchor=center, inner sep=0}, "{g^{\op}}"', curve={height=12pt}, from=1-1, to=2-1]
	\arrow[""{name=2, anchor=center, inner sep=0}, "{f^{\op}}"{description}, curve={height=-12pt}, from=1-1, to=2-1]
	\arrow[""{name=3, anchor=center, inner sep=0}, "{\ran_{f^{\op}}}"', from=2-1, to=1-3]
	\arrow["{\phi^{\op}}", shorten <=5pt, shorten >=5pt, Rightarrow, from=1, to=2]
	\arrow["{\xi_f}"', shorten <=2pt, shorten >=2pt, Rightarrow, from=3, to=0]
\end{tikzcd}\]
  
\end{division}

There is another way to mix morphisms and comorphisms, this time as two instances of a same notion of 1-cells. 
It also relates to the imperfect correspondence of (co)morphisms of sites with geometric morphisms: 
for two fixed sites $ (\mathcal{C},J)$, $ (\mathcal{D},K)$, there are geometric morphisms $ \widehat{\mathcal{D}}_K \rightarrow \widehat{\mathcal{C}}_J $ that are not induced from morphism of sites $ (\mathcal{C},J) \rightarrow (\mathcal{D},K)$, for the inverse image may not restrict to a functor. 
But it always restricts to \emph{a distributor}, and we may ask what flatness and continuity mean for distributors.

\subsection{Generalities on distributors}

Distributors are relational analog of functors;  

\begin{definition}
A \emph{distributor} $ H : \mathcal{C} \looparrowright \mathcal{D}$ is a functor $ \mathcal{D}^{\op} \times \mathcal{C} \rightarrow \Set$. A \emph{transformation of distributors} $ \phi : H \Rightarrow H'$ is a natural transformation in $ \Cat[\D^{\op} \times \C, \Set]$. 
\end{definition}

For a pair $ (d,c)$ with $d$ in $\D$ and $c $ in $\C$, the elements of $H(d,c)$ are called \emph{heteromorphisms from $d$ to $c$} and will be denoted as generalized arrows across categories as below
\[\begin{tikzcd}
	d & c
	\arrow["x", squiggly, from=1-1, to=1-2]
\end{tikzcd}\]

This terminology echoes to the formal name of \emph{homomorphism} for ordinary arrows inside a category; those are in fact a special case of heteromorphisms, namely those associated to the \emph{unit} $ \hom_\C : \C \looparrowright \C$ given by the hom functor $ \C[-,-] : \C^\op \times \C \rightarrow \Set$.

\begin{division}[Composition of distributors]
    For two distributors $ H : \mathcal{C} \looparrowright \mathcal{D}$ and $ K : \mathcal{D} \looparrowright \mathcal{E}$, one can consider the composite distributor, also known as their \emph{tensor product} $ K \otimes H : \mathcal{C} \looparrowright \mathcal{E}$, which is computed at a pair $ (e,c)$ as the coend
\[ (K \otimes H)[e,c] = \int^{d \in \mathcal{D}} H[d,c] \times K[e,d] \]



    The unit for the composition of distributors is given by the homset distributor $\hom_\C : \C \looparrowright \C$ sending $ (c,c')$ to $ \C[c,c']$. 
\end{division}

\begin{remark}
    Beware that composition is weak: associativity and unit laws only hold up to canonical invertible 2-cells. For this reason, categories together with distributors and transformations between them actually arrange into a \emph{bicategory} rather than a strict 2-category. 
\end{remark}

We denote as $\Dist$ the bicategory of category, distributors and transformations between them.

\begin{division}[Distributors and free cocompletion]
    Recall that for two categories $ \mathcal{C}, \mathcal{D}$, the category of distributors is in equivalence with the category of cocontinuous functors:
    \begin{align*}
        \Dist[\mathcal{C}, \mathcal{D}] &= \CAT[\mathcal{D}^{\op} \times \mathcal{C}, \Set] \\
        &\simeq \CAT[\mathcal{C}, \widehat{\mathcal{D}}] \\
        &\simeq \coCont[\widehat{\mathcal{C}}, \widehat{\mathcal{D}}]
    \end{align*}
    where a distributor $H : \mathcal{C} \looparrowright \mathcal{D}$ is sent to the functor $ \widehat{H} : \mathcal{C} \rightarrow \widehat{\mathcal{D}}$ sending $ c$ to the presheaf $ H(-,c)$ on $\mathcal{D}$, which in turn induces a cocontinuous functor $\lext_H : \widehat{\mathcal{C}} \rightarrow \widehat{\mathcal{D}}$ obtained through the left Kan extension $\lext_H = \lan_{\hirayo_\mathcal{C}} \widehat{H} $. Concretely, this cocontinuous functor returns at a presheaf $ X : \mathcal{C}^{\op} \rightarrow \Set$ the presheaf on $\mathcal{D}$ computed at an object $d$ as the coend:
        \[ \lext_H(X)(d) = \int^{c \in \mathcal{C}} H(d,c) \times X(c)  \] 
    Moreover, being cocontinuous between presheaf categories, $ \lext_H$ possesses a right adjoint $ \rest_H = \lan_H \hirayo_\mathcal{D}$ sending a presheaf $ Y : \mathcal{D}^{\op} \rightarrow \Set$ to the presheaf on $\mathcal{C}$ computed at an object $c$ as the homset
\[ \rest_H(Y)(c) = \widehat{\mathcal{D}}[\widehat{H}(c), Y] \]
\end{division}

\begin{remark}\label{lext_H has no radj}
    In the case of a functor $f : \C \rightarrow \D$, we saw at \cref{Formulas for restrictions and lextensions} that the restriction functor expresses at a presheaf $Y$ in $\widehat{\D}$ as the nerve
    \[ \rest_f(Y) \simeq \widehat{\mathcal{D}}[\hirayo_{\mathcal{D}}f, Y]  \]
    where the object on the left, being representable, is indecomposable in $\widehat{\D}$, hence preserves all colimits: this is why $ \rest_f$ preserves colimits, hence admits a further right adjoint $ \rext_f : \widehat{\C} \rightarrow \widehat{\D}$. However in the case of a distributor $ H : \C \rightarrow \D$, it is not true anymore in general that $ \widehat{H}(c)$ needs being representable, nor even indecomposable. Hence in full generality, $ \rest_H$ needs \emph{not} be cocontinuous, and may lack a further right adjoint. 
\end{remark}


\begin{division}
    The construction of the classifying functor associated to a distributor is pseudofunctorial. For a pair $ H : \C \looparrowright \D$, $ K : \D \looparrowright \E$ one has 
    \begin{align*}
        \widehat{K\otimes H} &= \lext_K \circ \widehat{H} 
    \end{align*}
  Moreover, as observed above, the classifying functor of the unit distributor $ \widehat{\hom_\C} : \C \to \widehat{\C} $ is precisely the Yoneda embedding, and by denseness of this latter, the corresponding geometric morphism $ \widehat{\C} \rightarrow \widehat{\C}$ is equivalent to the identity $ 1_{\widehat{\C}}$.

  If now one considers a transformation of distributors $ \phi : H \Rightarrow K$, that is, a natural transformation with components $ \phi_{(d,c)} : H(d,c) \rightarrow K(d,c)$, we can induce a natural transformation $ \widehat{\phi} : \widehat{H} \Rightarrow \widehat{K}$ whose component at $ c$ is the morphism of presheaves $ H(-,c) \Rightarrow K(-,c)$ whose evaluation at $d$ is given by $\phi_{(d,c)}$.  
\end{division}

\begin{division}[Representables]\label{representable distributors}
    Any functor $ f : \mathcal{C} \rightarrow \mathcal{D}$ induces two distributors: \begin{itemize}
        \item the representable $ \mathcal{D}(1,f) : \mathcal{C} \looparrowright \mathcal{D}$ sending $ (d,c)$ to $ \mathcal{D}(d,f(c))$,
        \item the corepresentable $ \mathcal{D}(f,1) : \mathcal{D} \looparrowright \mathcal{C}$ sending $ (c,d)$ to $ \mathcal{D}(f(c),d)$.
    \end{itemize}  
    For any $f : \C \rightarrow \D$, we have an adjunction $ \D[1, f] \dashv \D[f,1]$ internally to $ \Dist$, as explained in \cite{borceux1994handbook1}[proposition 7.9.1]. Conversely, if $ \C$, $\D$ are Cauchy-complete, then any distributor admitting a right adjoint in $ \Dist$ is of the form $ \D(1,f)$ for some functor $ f : \C \rightarrow \D$. 
\end{division}


\begin{division}[Extensions of representable distributors]\label{Extensions of representable distributors}
Suppose one has a functor $ f : \mathcal{C} \rightarrow \mathcal{D}$: consider the associated distributor $ \mathcal{D}(1,f) : \mathcal{C} \looparrowright \mathcal{D}$. The functor $ \mathcal{C} \rightarrow \widehat{\mathcal{D}}$ induced from this distributor is precisely the functor $ \mathcal{D}(1,f) = \hirayo_{\D} f$ as defined at \cref{Extensions and restrictions}. Then from \cref{Formulas for restrictions and lextensions} we know that the left extension of $ \mathcal{D}(1,f)$ as a distributor coincides with the left extension functor:
\[ \lext_{\mathcal{D}(1,f)} = \lext_f \]

On the other hand, suppose one has a functor $ f: \mathcal{D} \rightarrow \mathcal{C}$ and take the other associated distributor $ \mathcal{C}(f,1) : \mathcal{C} \looparrowright \mathcal{D} $. The functor $ \mathcal{C} \rightarrow \widehat{\mathcal{D}}$ induced from this distributor is precisely the nerve functor $ \mathcal{C}(f,1)$ sending $ c$ to the presheaf $ \mathcal{C}[f,c] : \mathcal{D}^{\op} \rightarrow \Set$. Then, the expression of the associated cocontinuous functor as the left Kan extension of this induced functor, together with the first expression of the restriction functor given at \cref{Formulas for restrictions and lextensions}, gives that 
\[ \lext_{\mathcal{C}(f,1)} = \rest_f \]  
As we will see, those formulas will play an important role when unifying cover-preserving and cover-lifting conditions into a same condition for distributors. 
\end{division}

\begin{division}
    Distributors are classified by the presheaf construction in the sense that for any $\C, \D$, we have an equivalence of categories $ \Dist[\C,\D] \simeq \CAT[\C,\widehat{\D}]$. Formally, this means that $ \Dist$ is the \emph{Kleisli 2-category} for the relative pseudomonad $ \widehat{(-)} : \Cat \rightarrow \CAT$. We will exhibit latter a similar property for an appropriate bicategory of continuous distributors between sites. 
\end{division}


\subsection{Flat and lex distributors}

As well as flatness is required to construct a geometric morphism from a functor, we will require an analog of flatness for distributors; this will be sufficient to capture all geometric morphisms between associated presheaf topoi. Then content of this subsection is closely related to \cite{benabou2000distributors}[section 6] where the idea of flat distributor is introduced, while they also briefly appear in the context of torsors at \cite{elephant}[B3.2.8].

\begin{division}[Opfibration of elements of a distributor]
A distributor $ H : \mathcal{C} \looparrowright \mathcal{D}$ induces in each $d$ of $\mathcal{D}$ a functor $ H(d,-) : \mathcal{C} \rightarrow \Set$ sending $ c$ to $H(d,c)$ and $ u :c \rightarrow c'$ to the transition functor $ H(d,u) : H(d,c) \rightarrow H(d,c')$. This induces a discrete opfibration $ \pi^H_d : d \downarrow H \rightarrow \mathcal{C}$ where $ d \downarrow H$ has as objects $ (c,x)$ with $x \in H(d,c)$ and morphisms $(c,x) \rightarrow (c',x')$ those $u : c \rightarrow c'$ such that $H(d,u)(x) = x'$. Then in the extension above, the coend formula for the left extension $ \lext_H$ re-expresses as a colimit of the projection
\[  \lext_H(X)(d) = \colim ( (d \downarrow H)^{\op} \rightarrow \mathcal{C}^{\op} \rightarrow \Set ) \]
\end{division}

\begin{definition}
Then a distributor $H : \mathcal{C} \looparrowright \mathcal{D} $ is said to be \emph{flat} if for each $d$ the category $ d \downarrow H$ is cofiltered.     
\end{definition}

\begin{remark}
This generalizes exactly the usual notion of representable flatness; it means that for any $d$ we have\begin{enumerate}
    \item $d \downarrow H$ is non empty, so there is an heteromorphism $ d \rightsquigarrow c $
    \item for any diagram as below
\[\begin{tikzcd}
	& c \\
	d \\
	& {c'}
	\arrow["x", squiggly, from=2-1, to=1-2]
	\arrow["{x'}"', squiggly, from=2-1, to=3-2]
\end{tikzcd}\]
there is a span
\[\begin{tikzcd}
	& c \\
	d & {c''} \\
	& {c'}
	\arrow["x", squiggly, from=2-1, to=1-2]
	\arrow["{x''}"{description}, squiggly, from=2-1, to=2-2]
	\arrow["{x'}"', squiggly, from=2-1, to=3-2]
	\arrow["u"', from=2-2, to=1-2]
	\arrow["{u'}", from=2-2, to=3-2]
\end{tikzcd}\]
\item and for any diagram as below -- that is, $x \in H(d,c)$ such that $ H(d,u) (x) = x' = H(d,u')(x)$
\[\begin{tikzcd}
	& {c'} \\
	d & c
	\arrow["{x'}", squiggly, from=2-1, to=1-2]
	\arrow["x"', squiggly, from=2-1, to=2-2]
	\arrow["u", shift left, from=2-2, to=1-2]
	\arrow["{u'}"', shift right, from=2-2, to=1-2]
\end{tikzcd}\]
there is $ u'' : c'' \rightarrow c$ with $ uu'' = u'u''$ and $ x''$ such that $u''(x'') = x$
\[\begin{tikzcd}
	& {c'} \\
	d & c \\
	& {c''}
	\arrow["{x'}", squiggly, from=2-1, to=1-2]
	\arrow["x"{description}, squiggly, from=2-1, to=2-2]
	\arrow["{x''}"', squiggly, from=2-1, to=3-2]
	\arrow["u", shift left, from=2-2, to=1-2]
	\arrow["{u'}"', shift right, from=2-2, to=1-2]
	\arrow["{u''}"', from=3-2, to=2-2]
\end{tikzcd}\]
\end{enumerate}     
\end{remark}

The following is a rephrasing of a fact established in \cite{benabou2000distributors}[theorem 6.5] as well as \cite{elephant}[B.3.2.8 (c)]:

\begin{proposition}
 
For a distributor $ H : \mathcal{C} \looparrowright \mathcal{D}$ the following are equivalent:\begin{itemize}
    \item $H$ is a flat distributor;
    \item the classifying functor $ \widehat{H} : \C \rightarrow \widehat{\D}$ is representably flat;
    \item the left extension $ \lext_H : \widehat{\mathcal{C}} \rightarrow \widehat{\mathcal{D}}$ preserves finite limits.
\end{itemize}    
\end{proposition}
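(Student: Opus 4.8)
The plan is to settle the equivalence of the last two items using the classical theory of representably flat functors, and then to bridge to the combinatorial flatness of $H$ by working pointwise in $\widehat{\D}$. I would start by recalling the standard characterization (see \cite{elephant}[B3.2.8], \cite{benabou2000distributors}[theorem 6.5]): for a functor $F : \mathcal{A} \to \mathcal{E}$ out of a small category into a cocomplete category with finite limits, $F$ is representably flat if and only if the left Kan extension $\lan_{\hirayo_\mathcal{A}} F : \widehat{\mathcal{A}} \to \mathcal{E}$ preserves finite limits; and in the special case $\mathcal{E} = \Set$ this further amounts to the category of elements of $F$ being cofiltered. Since $\widehat{\D}$ is cocomplete with finite limits, applying the first statement to $F = \widehat{H} : \C \to \widehat{\D}$, together with the identity $\lext_H = \lan_{\hirayo_\C}\widehat{H}$ recalled above, gives immediately that $\widehat{H}$ is representably flat if and only if $\lext_H$ preserves finite limits; this disposes of the second and third items.

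For the remaining equivalence I would exploit that limits in $\widehat{\D}$ are computed pointwise, so that the evaluation functors $\operatorname{ev}_d : \widehat{\D} \to \Set$, with $d$ ranging over $\D$, jointly create finite limits; hence $\lext_H$ preserves finite limits if and only if $\operatorname{ev}_d \circ \lext_H : \widehat{\C} \to \Set$ does for every $d$. The coend formula for $\lext_H$ together with Yoneda then identify this composite:
\[ (\operatorname{ev}_d \circ \lext_H)(X) \;=\; \int^{c \in \C} H(d,c) \times X(c) \;=\; \int^{c \in \C} \widehat{\C}(\hirayo_c, X) \times H(d,c) \;=\; \lan_{\hirayo_\C}\big( H(d,-) \big)(X), \]
so that $\operatorname{ev}_d \circ \lext_H = \lan_{\hirayo_\C}(H(d,-))$. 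By the $\Set$-valued case of the characterization above, this preserves finite limits exactly when the functor $H(d,-) : \C \to \Set$ is flat, that is, when its category of elements — which is precisely $d \downarrow H$ — is cofiltered. Letting $d$ vary recovers the first item, closing the cycle.

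The delicate point here is organisational rather than computational: one should resist attempting a direct proof that flatness of $H$ implies representable flatness of $\widehat{H}$, since the latter is tested against all presheaves $Y$ and $\widehat{\D}(Y, \widehat{H}(-))$ is a limit of the flat functors $H(d,-)$, which need not itself be flat. Routing everything through $\lext_H$ bypasses this obstruction, precisely because finite limits in the presheaf category $\widehat{\D}$ are detected objectwise. I would also be careful with the variance conventions, so that ``$d \downarrow H$ cofiltered'' matches ``$\lext_H(X)(d)$ a filtered colimit of the values of $X$'', which is how the colimit reformulation of $\lext_H$ has been set up above.
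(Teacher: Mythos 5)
The paper gives no argument for this proposition beyond a pointer to B\'enabou and to the Elephant, so there is no ``official'' proof to measure yours against; judged on its own terms, your treatment of the equivalence between the first and third items is correct and is the right mechanism. Finite limits in $\widehat{\D}$ are computed pointwise, the identification $\mathrm{ev}_d \circ \lext_H \simeq \lan_{\hirayo_\C}(H(d,-))$ is a valid coend/Yoneda computation, and the $\Set$-valued case of Diaconescu's theorem then gives that $\lext_H$ is lex if and only if each category of elements $d \downarrow H$ is cofiltered, i.e.\ if and only if $H$ is a flat distributor.

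The gap is in the second item. The ``standard characterization'' you invoke --- that for $F : \mathcal{A} \to \mathcal{E}$ with $\mathcal{E}$ cocomplete and finitely complete, $F$ is representably flat if and only if $\lan_{\hirayo_{\mathcal{A}}} F$ is lex --- is false in that generality: only the forward implication holds. Already for $\mathcal{E} = \Set$ and $\mathcal{A}$ the discrete category on two objects $a,b$, the functor $F$ with $F(a)=1$ and $F(b)=\emptyset$ has left Kan extension $(X,Y) \mapsto X$, which is lex, while $\Set(\emptyset, F(-))$ is the constant functor at the singleton, whose category of elements is the two-object discrete category and hence not cofiltered; so $F$ is not representably flat. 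Your closing paragraph names exactly this obstruction --- $\widehat{\D}(E,\widehat{H}(-))$ for non-representable $E$ is a limit of the flat functors $H(d,-)$, and limits of flat functors need not be flat --- but routing through $\lext_H$ does not bypass it when the goal is the second item; it only avoids it for the third. Indeed, with the usual meaning of ``representably flat'' (flatness of $\widehat{\D}(E,\widehat{H}(-))$ for \emph{all} presheaves $E$), the implication from the third item to the second genuinely fails, and the example above, read as a distributor $\{a,b\} \looparrowright 1$, contradicts the proposition itself. The repair, both of the statement and of your proof, is to read the second item as flatness tested against representables only, i.e.\ cofilteredness of $\hirayo_d \downarrow \widehat{H}$ for each $d$ in $\D$: the Yoneda isomorphism $\hirayo_d \downarrow \widehat{H} \cong d \downarrow H$ then makes the first and second items tautologically equivalent, and your pointwise argument supplies the third. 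This is also the reading under which the cited sources apply, since what they characterize is the ``filtering''/internally flat condition equivalent to left-exactness of the extension, not representable flatness against arbitrary objects.
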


\begin{corollary}
A distributor $ H $ is flat if and only if it induces a geometric morphism $ (\lext_H, \rest_H) : \widehat{\mathcal{D}} \rightarrow \widehat{\mathcal{C}}$. Conversely, any geometric morphism $ (f^*, f_*) : \widehat{\mathcal{D}} \rightarrow \widehat{\mathcal{C}}$ is uniquely induced by a flat distributor $ H_f : \mathcal{C} \looparrowright \mathcal{D}$.      
\end{corollary}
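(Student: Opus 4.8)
The plan is to deduce the statement entirely from the preceding Proposition together with the free-cocompletion dictionary $\Dist[\C,\D] \simeq \coCont[\widehat{\C},\widehat{\D}]$ recorded above; essentially no new computation should be required.

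First I would treat the direct implication. For any distributor $H$ the functor $\lext_H : \widehat{\C} \to \widehat{\D}$ is cocontinuous and comes equipped with the right adjoint $\rest_H$, so the adjoint pair $(\lext_H, \rest_H)$ is a geometric morphism $\widehat{\D} \to \widehat{\C}$ if and only if its inverse image part $\lext_H$ preserves finite limits. By the Proposition this is equivalent to flatness of $H$, which settles the ``if and only if''.

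Next I would handle the converse. Starting from a geometric morphism $(f^*, f_*) : \widehat{\D} \to \widehat{\C}$, the inverse image $f^* : \widehat{\C} \to \widehat{\D}$ is a left adjoint, hence cocontinuous, so under the equivalence $\coCont[\widehat{\C},\widehat{\D}] \simeq \Dist[\C,\D]$ it corresponds to an essentially unique distributor $H_f : \C \looparrowright \D$ with $\lext_{H_f} \simeq f^*$. Since $f^*$ preserves finite limits, the Proposition forces $H_f$ to be flat; and since $\rest_{H_f}$ is a right adjoint to $\lext_{H_f} \simeq f^*$, uniqueness of adjoints gives $\rest_{H_f} \simeq f_*$, so that the geometric morphism induced by $H_f$ is $(f^*, f_*)$ up to canonical isomorphism. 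Uniqueness of $H_f$ then follows from the fact that the cocompletion dictionary is an equivalence of categories.

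The only delicate point -- and the nearest thing to an obstacle -- is the bookkeeping around ``uniqueness'': the correspondence $\Dist[\C,\D] \simeq \coCont[\widehat{\C},\widehat{\D}]$ is an equivalence of categories, so $H_f$ is pinned down only up to isomorphism of distributors, and accordingly the geometric morphism should be said to be recovered up to isomorphism of geometric morphisms rather than on the nose; I would phrase the corollary's uniqueness clause in exactly those terms.
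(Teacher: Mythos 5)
Your proposal is correct and follows essentially the same route as the paper: the paper's explicit distributor $H_f(d,c) = f^*\hirayo_c(d)$ is precisely the one your appeal to the equivalence $\coCont[\widehat{\mathcal{C}},\widehat{\mathcal{D}}] \simeq \Dist[\mathcal{C},\mathcal{D}]$ produces, and both arguments reduce flatness of $H_f$ to the equivalent characterizations in the preceding Proposition (the paper via representable flatness of $f^*\hirayo$, you via lexness of $\lext_{H_f} \simeq f^*$). Your caveat that uniqueness should be read up to isomorphism is also the right reading, and is made precise by the paper's subsequent pseudofully-faithfulness statement.
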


\begin{proof}
Consider indeed the distributor $H_f$ sending $ (d,c)$ to the homset $ \widehat{\mathcal{D}}[\hirayo_d, f^*\hirayo_c]$, that is, to the valued of the presheaf $ f^*\hirayo_c(d)$. This distributor is flat because the restriction $ f^*\hirayo : \mathcal{C} \rightarrow \widehat{\mathcal{D}}$ is representably flat for $f^*$ is so.    
\end{proof}

The pseudofunctor $\widehat{(-)} : \Dist \rightarrow \coCont $ induces a pseudofunctor $ (\Flat\Dist)^{\op}\rightarrow \Top$:

\begin{proposition}
The pseudofunctor $ \widehat{(-)} : \Flat\Dist^{\op} \rightarrow \Top$ is pseudofully faithful, that is, for any $ \mathcal{C}, \mathcal{D}$ we have an equivalence of categories
\[  \Flat\Dist[\mathcal{C}, \mathcal{D}] \simeq \Top[\widehat{\mathcal{D}}, \widehat{\mathcal{C}}] \]
\end{proposition}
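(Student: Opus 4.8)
The plan is to realise the claimed hom-equivalence as a composite of two equivalences already essentially at our disposal, and then to check that this composite is nothing but the action of $\widehat{(-)}$ on the hom-category from $\mathcal{C}$ to $\mathcal{D}$. Fix $\mathcal{C}$ and $\mathcal{D}$. By the equivalence recorded in the subsection on distributors and free cocompletion, the assignment $H \mapsto \lext_H$ (with a transformation $\phi : H \Rightarrow H'$ sent to the corresponding natural transformation $\lan_{\hirayo_\mathcal{C}}\widehat{\phi} : \lext_H \Rightarrow \lext_{H'}$) underlies an equivalence of categories $\Dist[\mathcal{C},\mathcal{D}] \simeq \coCont[\widehat{\mathcal{C}},\widehat{\mathcal{D}}]$. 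By the Proposition characterising flat distributors, $H$ lies in the full subcategory $\Flat\Dist[\mathcal{C},\mathcal{D}]$ precisely when $\lext_H$ lies in the full subcategory $\coCont_{\mathrm{lex}}[\widehat{\mathcal{C}},\widehat{\mathcal{D}}]$ of cocontinuous functors that preserve finite limits; since both of these are conditions on objects only, the equivalence above restricts to an equivalence $\Flat\Dist[\mathcal{C},\mathcal{D}] \simeq \coCont_{\mathrm{lex}}[\widehat{\mathcal{C}},\widehat{\mathcal{D}}]$.

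For the second equivalence, recall that a cocontinuous functor between presheaf topoi automatically has a right adjoint, the two categories being locally presentable; thus a finite-limit-preserving cocontinuous functor $F : \widehat{\mathcal{C}} \to \widehat{\mathcal{D}}$ is exactly the inverse image part of a geometric morphism $\widehat{\mathcal{D}} \to \widehat{\mathcal{C}}$, and this geometric morphism is determined by $F$ up to canonical isomorphism. Moreover, by the very definition of the $2$-category $\Top$, a geometric transformation between two such morphisms is the same datum as a natural transformation between their inverse image functors. Hence sending $F$ to the geometric morphism with inverse image $F$ underlies an equivalence $\coCont_{\mathrm{lex}}[\widehat{\mathcal{C}},\widehat{\mathcal{D}}] \simeq \Top[\widehat{\mathcal{D}},\widehat{\mathcal{C}}]$. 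Composing it with the equivalence of the previous paragraph yields the desired equivalence $\Flat\Dist[\mathcal{C},\mathcal{D}] \simeq \Top[\widehat{\mathcal{D}},\widehat{\mathcal{C}}]$.

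It remains to check that this composite coincides with $\widehat{(-)}$ on hom-categories. On objects, $H$ is sent to $\lext_H$ and then to the geometric morphism with inverse image $\lext_H$, i.e. to $(\lext_H,\rest_H)$ — which is exactly the geometric morphism attached to $H$ in the Corollary above. On a transformation $\phi : H \Rightarrow H'$, the composite produces $\lan_{\hirayo_\mathcal{C}}\widehat{\phi}$ regarded as a geometric transformation, i.e. precisely the $2$-cell that $\widehat{(-)}$ assigns to $\phi$. Therefore $\widehat{(-)} : \Flat\Dist^{\op} \to \Top$ induces an equivalence on each hom-category, which is exactly the assertion of pseudofull faithfulness; in particular essential surjectivity on objects (the second half of the Corollary) is recovered as a by-product.

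The one step needing care is the second equivalence: one must use that a geometric morphism between presheaf topoi is detected by, and freely reconstructed from, its inverse image, that every finite-limit-preserving cocontinuous functor $\widehat{\mathcal{C}} \to \widehat{\mathcal{D}}$ genuinely occurs as such an inverse image (existence of the right adjoint), and — to make the identification of $2$-cells unambiguous — that $\Top$ is set up with $2$-cells given by natural transformations of inverse image functors (equivalently, by mateship, of direct image functors in the reverse direction). With those conventions fixed, everything else is bookkeeping of equivalences already proved in this section, restricted to the full subcategories cut out respectively by the flatness condition and by finite-limit-preservation.
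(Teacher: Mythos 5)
Your proof is correct, but it takes a genuinely different route from the paper's. The paper argues by exhibiting an explicit pseudo-inverse $f \mapsto H_f$ (with $H_f(d,c) = f^*\hirayo_c(d)$) and then verifying by direct coend computation that both round-trips are isomorphic to identities: $H_{\lext_H}(d,c) \simeq \int^{c'} H(d,c') \times \mathcal{C}[c',c] \simeq H(d,c)$ by co-Yoneda in one direction, and $\lext_{H_f}E(d) \simeq \colim^{f^*\hirayo_{\mathcal{C}}(d)} E \simeq f^*E(d)$ by the weighted-colimit description of the inverse image in the other. You instead factor the hom-equivalence through the full subcategory of $\coCont[\widehat{\mathcal{C}},\widehat{\mathcal{D}}]$ on the lex functors, using (i) the already-recorded equivalence $\Dist[\mathcal{C},\mathcal{D}] \simeq \coCont[\widehat{\mathcal{C}},\widehat{\mathcal{D}}]$ restricted along the flatness characterisation, and (ii) the adjoint functor theorem plus the convention that $2$-cells of $\Top$ are transformations of inverse images. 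Your approach is more modular and avoids coend calculus entirely, at the price of leaning on the equivalence $\Dist \simeq \coCont$ being a genuine equivalence of categories (not merely a bijection on objects) and on the conventions for $\Top$, both of which you rightly flag; the paper's computation is more self-contained and, as a by-product, produces the explicit formula $H_f(d,c) = f^*\hirayo_c(d)$ that is reused later (e.g.\ in the proof of the adjunction between distributors of sites and geometric morphisms), which your formal argument does not surface. Your closing remark that essential surjectivity of the corollary is recovered as a by-product is accurate.
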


\begin{proof}
    We prove the functors above to be mutual inverse. For a distributor $H : \mathcal{C} \looparrowright \mathcal{D}$, one has in each $(d,c)$ 
    \begin{align*}
        H_{\lext_H}(d,c) &\simeq \lext_H(\hirayo_c)(d) \\
        &\simeq \int^{c' \in \mathcal{C}} H(d,c') \times \mathcal{C}[c',c] \\
        &\simeq H(d,c)
    \end{align*}
    where the last isomorphism comes from the coend decomposition of the functor $ H(d,-)$ under co-Yoneda. In the converse direction, for a geometric morphism $f : \widehat{\mathcal{D}} \rightarrow \widehat{\mathcal{C}}$ one has at each object $E$ in $\widehat{\mathcal{C}}$ and $d$ in $\mathcal{D}$:
    \begin{align*}
        \lext_{H_f}E(d) &\simeq \int^{c \in \mathcal{C}} f^*\hirayo_c(d) \times E(c) \\ 
        &\simeq \colim_{c \in \mathcal{C}^{\op}}^{f^*\hirayo_\mathcal{C}(d)} E(c) \\
        &\simeq (\lan_{f^{\op} \mathfrak{l}_{(\mathcal{C},J)}^{\op}} E)(d)\\
        &\simeq f^*E(d)
    \end{align*}
\end{proof}

Flat functors generalize lex functor between lex categories; but if $\C$, $\D$ are lex categories, the property for $f : \C \rightarrow \D$ to be lex amounts to having for each finite diagram $ (c_i)_{ i\in I}$ in $\C$ a natural isomorphism at each $d$ 
\[ \D[d, f(\lim_{i \in I} \, c_i)] \simeq \lim_{i\in I} \, H(d,c_i) \]

This leads to the following notion:

\begin{definition}
    A distributor $ H : \C\looparrowright \D$ between lex categories is said to be \emph{lex} if for any finite diagram $ (c_i)_{i \in I}$ in $\C$ a natural isomorphism at each $d$ 
    \[ H(d,\lim_{i \in I} \, c_i) \simeq \lim_{i\in I} \, H(d,c_i) \]
\end{definition}

\begin{proposition}
    A distributor $ H : \C\looparrowright \D$ between lex categories is flat if and only if it is lex.
\end{proposition}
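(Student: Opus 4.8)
The plan is to deduce this from the functor-level analogue, using the characterisations of flat distributors already obtained. By the proposition above, $H : \mathcal{C} \looparrowright \mathcal{D}$ is flat precisely when its classifying functor $\widehat{H} : \mathcal{C} \to \widehat{\mathcal{D}}$, $c \mapsto H(-,c)$, is representably flat, equivalently when $\lext_H$ is left exact. I would then invoke the classical fact that, for a functor out of a finitely complete category into a category with finite limits, being representably flat is the same as preserving finite limits (the distributor statement of \cite{benabou2000distributors} and \cite{elephant} on which the earlier proposition rests is exactly the avatar of this for the target $\widehat{\mathcal{D}}$, which is complete). Since $\mathcal{C}$ is lex by hypothesis, this applies to $\widehat{H}$, reducing the claim to the equivalence: $\widehat{H}$ preserves finite limits $\iff$ $H$ is lex.

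For that last equivalence I would argue pointwise. A functor $\widehat{H}$ preserves the limit of a finite diagram $(c_i)_{i\in I}$ in $\mathcal{C}$ iff the canonical comparison $\widehat{H}(\lim_i c_i) \to \lim_i \widehat{H}(c_i)$ is invertible in $\widehat{\mathcal{D}}$; since limits of presheaves on $\mathcal{D}$ are computed objectwise, evaluating at $d \in \mathcal{D}$ turns this into the comparison $H(d,\lim_i c_i) \to \lim_i H(d,c_i)$ in $\Set$, which must therefore be invertible for each $d$. As $H$ is a functor on $\mathcal{D}^{\op}\times\mathcal{C}$ and finite limits in $\Set$ are functorial, this family of comparison maps is automatically natural in $d$, so ``invertible at each $d$'' is the same as ``a natural isomorphism'', i.e.\ exactly the defining condition of a lex distributor. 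Chaining the equivalences yields the proposition.

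There is no substantial obstacle here: the statement is essentially a repackaging of the Diaconescu-type equivalence ``flat $=$ left exact'' for $\Set$-valued functors on a finitely complete category (one could equally bypass $\widehat{H}$ and argue directly, noting that flatness of $H$ says that for each $d$ the category $d\downarrow H \cong \int_{\mathcal{C}} H(d,-)$ is cofiltered, hence each $H(d,-):\mathcal{C}\to\Set$ is flat, hence lex). The only points requiring care are bookkeeping ones: that the ``natural isomorphism at each $d$'' in the definition of lex distributor is meant to be the canonical comparison map, so that it genuinely matches left-exactness of $H(d,-)$, and that naturality in $d$ comes for free. It is also worth remarking that finite completeness of $\mathcal{D}$ plays no role in the argument — only that of $\mathcal{C}$ — the hypothesis on $\mathcal{D}$ serving merely to make the notion of lex distributor meaningful.
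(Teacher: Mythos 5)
Your proof is correct and follows essentially the same route as the paper's: flatness of $H$ is equivalent to representable flatness of $\widehat{H}$, which by lexness of $\mathcal{C}$ reduces to $\widehat{H}$ preserving finite limits, which by pointwiseness of limits in $\widehat{\mathcal{D}}$ is exactly lexness of $H$. Your additional bookkeeping remarks (naturality in $d$ coming for free, and finite completeness of $\mathcal{D}$ being inessential) are accurate but not needed beyond what the paper already records.
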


\begin{proof}
    By what precedes, $H$ is flat if and only if $\widehat{H}$ is representably flat, which reduces to being lex by lexness of $\C$, which amounts to lexness of $H$ itself by pointwiseness of limits in $\widehat{\D}$.
\end{proof}

\begin{remark}\label{lex dist induce normal geom}
    Then in particular if $\C$, $\D$ are small lex categories, one get an equivalence of categories, denoting $\Lex\Dist[\C,\D]$ the category of lex distributors between $\C$, $\D$, 
    \[ \Lex\Dist[\C,\D] \simeq \Top[\widehat{\D}, \widehat{\C}] \]
    
    This generalizes the fact that a lex functor between lex categories $ f : \C \rightarrow \D$ induces more specifically a \emph{local} geometric morphism $ (\lext_f \dashv \rest_f \dashv \rext_f) : \widehat{\D} \rightarrow \widehat{\C}$ with exceptional image part given by $\rext_f$.

    Now in the case of a distributor, we saw at \cref{lext_H has no radj} that differently from the functorial case, the restriction $ \rest_H$ needs not having a further right adjoint: hence, in the case where $H$ is flat, the geometric morphism $ (\lext_H \dashv \rest_H) : \widehat{\D} \rightarrow \widehat{\C}$ needs not being local, nor in fact, enjoying any specific property at all ! 
\end{remark}

\section{Distributors of sites}

In this section we describe how distributors between sites satisfying appropriate conditions are in bijective correspondence with geometric morphisms between associated sheaf topoi, enhancing the imperfect correspondence with morphisms of sites or comorphisms of sites.

\subsection{Cover-distributivity and cover-flatness}

Let us first re-express the conditions of being a morphism and a comorphism of site in terms of heteromorphisms, using the extension and restriction formalism. 
\begin{division}[Restrictions and extensions on sieves]
Let $ f: \mathcal{C} \rightarrow \mathcal{D}$ a functor and let be $ J$ and $K$ topologies on $\mathcal{C}$ and $ \mathcal{D}$: then one can define\begin{itemize}
    \item for each $J$-covering sieve $S$ on $c$ in $\mathcal{C}$, the sieve $ f[S]$ containing all $ x : d \rightarrow f(c)$ that factorize through an arrow $f(u) : f(c') \rightarrow f(c)$ with $u : c' \rightarrow c$ in $S$: this is the image factorization of the lextension 
    \[\begin{tikzcd}
	{\lext_f(S)} && {\hirayo_{f(c)}} \\
	& {f[S]}
	\arrow["{\lext_f(m)}", from=1-1, to=1-3]
	\arrow[two heads, from=1-1, to=2-2]
	\arrow["{f[m]}"', tail, from=2-2, to=1-3]
\end{tikzcd}\]
    \item for each $K$-covering sieve $ R$ on $f(c)$, the sieve $ f^{-1}(R)$ on $c$ containing all $ u: c' \rightarrow c$ such that $f(u)$ is in $R$. This is the pullback along the name of the unit of the restriction
    \[\begin{tikzcd}
	{f^{-1}(R)} & {\rest_f R} \\
	{\hirayo_c} & {\mathcal{D}(f,f(c))}
	\arrow[from=1-1, to=1-2]
	\arrow[tail, from=1-1, to=2-1]
	\arrow["\lrcorner"{anchor=center, pos=0.125}, draw=none, from=1-1, to=2-2]
	\arrow[tail, from=1-2, to=2-2]
	\arrow["{\nu_c}"', from=2-1, to=2-2]
\end{tikzcd}\]
\end{itemize}
\end{division}

\begin{division}[Generalized characterization of morphisms of sites]
A functor $f : \mathcal{C} \rightarrow \mathcal{D}$ defines
a morphism of sites if for any sieve $ S \rightarrowtail \hirayo_c$ the image sieve $ f(S)$, that is, the sieve generated by the arrows of the form $f(u)$ for $ u : c' \rightarrow c$ in $S$ (this is $ f[S]$) is to be $ K$-covering -- for which it is enough it contains a $K$-covering sieve. But if $ f[S]$ is covering, then for any $x : d \rightarrow f(c)$ the pullback sieve $ x^*f[S]$ consisting of all those $v : d' \rightarrow d$ that factorize through some $ f(u) : f(c') \rightarrow f(c)$ must be covering. This is exactly the set of all $ v$ such that there is a $(u,x')$ as below of heteromorphisms for $ \mathcal{D}(1,f)$
\[\begin{tikzcd}
	d & c \\
	{d'} & {c'}
	\arrow["x", squiggly, from=1-1, to=1-2]
	\arrow["v", from=2-1, to=1-1]
	\arrow["{x'}"', squiggly, from=2-1, to=2-2]
	\arrow["u"', from=2-2, to=1-2]
\end{tikzcd}\]

\end{division}

Then we can rephrase in a slightly more general way the conditions of being cover preserving and cover-lifting. 

\begin{proposition}\label{morphisms of sites in term of lext}
    For a functor $f : \mathcal{C} \rightarrow \mathcal{D}$, the following are equivalent:\begin{itemize}
        \item $f$ is a cover-preserving functor $(\mathcal{C},J) \rightarrow (\mathcal{D},K)$
        \item for each $c$ in $\mathcal{C}$, each $J$-covering sieve $ S$ over $c$ and each $x : d \rightarrow f(c) $ in $ \mathcal{D}(1,f)(d,c)$, the pullback sieve $ x^*f[S]$ is in $K(d)$.
    \end{itemize}  
\end{proposition}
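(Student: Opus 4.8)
The plan is to observe that this equivalence is a purely formal consequence of the axioms for the Grothendieck topology $K$, once one notes that $\mathcal{D}(1,f)(d,c) = \mathcal{D}(d,f(c))$, so that the heteromorphisms $x$ appearing in the second condition range over \emph{all} arrows of $\mathcal{D}$ with codomain $f(c)$ --- in particular $\mathrm{id}_{f(c)}$ is among them.

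First I would treat the implication from cover-preservation to the second condition. By the description of morphisms of sites recalled just above, $f$ is cover-preserving exactly when, for each $c$ in $\mathcal{C}$ and each $J$-covering sieve $S$ on $c$, the image sieve $f[S]$ belongs to $K(f(c))$ --- and it is harmless here whether one demands $f[S]$ itself covering or merely that it contain a $K$-covering sieve, since a supersieve of a $K$-covering sieve is again $K$-covering. Granting this, the stability axiom of $K$ yields, for every $x : d \to f(c)$, that the pullback sieve $x^{*}f[S]$ lies in $K(d)$, which is precisely the second condition.

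Conversely, assuming the second condition, I would simply specialise $x$ to the identity $\mathrm{id}_{f(c)} \in \mathcal{D}(1,f)(f(c),c)$; since pulling a sieve back along an identity returns that sieve unchanged, we obtain $f[S] = \mathrm{id}_{f(c)}^{*}f[S] \in K(f(c))$, and as $S$ was an arbitrary $J$-covering sieve this says exactly that $f$ is cover-preserving.

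I do not expect a genuine obstacle here: the substance lies in the \emph{reformulation} rather than in the argument, the point being that cover-preservation has now been expressed entirely in terms of the heteromorphisms of the representable distributor $\mathcal{D}(1,f)$ together with pullbacks of sieves, which is the shape of the condition that will generalise to arbitrary distributors in the following subsection. The only care needed is to match the stated definition of cover-preservation with the image-sieve notation $f[S]$ fixed in the preceding division, and to keep in mind that the identity arrow is available as one of the admissible heteromorphisms $x$.
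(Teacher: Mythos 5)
Your argument is correct and coincides with the paper's own proof: both directions are handled identically, using the stability axiom of $K$ to pull back the covering sieve $f[S]$ along an arbitrary $x : d \to f(c)$ for the forward implication, and specialising $x$ to $\mathrm{id}_{f(c)}$ for the converse. Nothing further is needed.
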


\begin{proof}
In one direction suppose $ f$ to be a cover-preserving, and take $c$ and $S$ on $c$. If $ S$ is $J$-covering, then for $f$ is a morphism, $x^*f[S]$ is $K$-covering; but then for any $d $ in $\mathcal{D}$ and $x : d \rightarrow f(c)$, by stability, $x^*f[S]$ is in $K(d)$.

Conversely, suppose that for each $ x : d \rightarrow f(c)$ and $S$ in $J(c)$ as above, $ x^*f[S]$ is $K(d)$ covering: then in particular $ \id_{f(c)}^*f[S] = f[S]$ is $K(f(c))$-covering, so that $ f$ sends $J$-covering sieves to $ K$-covering sieves.\end{proof}

\begin{division}[Generalized characterization of comorphisms of sites]\label{comorphism with rest}
Dually, a functor $f : \mathcal{C} \rightarrow \mathcal{D}$ defines
a comorphism of sites $(\mathcal{C},J) \rightarrow (\mathcal{D},K)$ if
for any $K$-covering sieve $ R $ over $f(c)$, the sieve $ f^{-1}(R)$ of all $ u : c' \rightarrow c$ in $\mathcal{C}$ such that $ f(u)$ is in $R$ is $J$-covering: in other words, if $R$ is $K$-covering on $f(c)$ then there is an $S$ on $c$ in $J$ such that $ S \subseteq f^{-1}(R)$, equivalently $ f[S] \subseteq R$. 



In this case we use the dual distributor $ \mathcal{D}(f,1)$: then if $f$ is a comorphism, for any $ x: f(c) \rightarrow d$ and any $K$-sieve $R$ on $d$, as the pullback-sieve $ x^*R$ is a $K$-covering sieve on $f(c)$, the cover-lifting property requires that $ f^{-1}(x^*R)$ is $J$-covering: but the latter consists in all the $u: c' \rightarrow c$ for which there is a pair $ (v,x')$ and a factorization 
\[\begin{tikzcd}
	{f(c)} & d \\
	{f(c')} & {d'}
	\arrow["x", from=1-1, to=1-2]
	\arrow["{f(u)}", from=2-1, to=1-1]
	\arrow["{x'}"', dashed, from=2-1, to=2-2]
	\arrow["v"', from=2-2, to=1-2]
\end{tikzcd}\]
which is written in terms of heteromorphisms again as 
\[\begin{tikzcd}
	d & c \\
	{d'} & {c'}
	\arrow["x", squiggly, from=1-1, to=1-2]
	\arrow["v", from=2-1, to=1-1]
	\arrow["{x'}"', squiggly, from=2-1, to=2-2]
	\arrow["u"', from=2-2, to=1-2]
\end{tikzcd}\]
This is in fact the pullback of the preimage sieve along the name of $x$, as depicted below:
\[\begin{tikzcd}
	{f^{-1}(x^*S)} & {\rest_f(x^*S)} \\
	{\hirayo_d} & {\C(f,c)}
	\arrow[from=1-1, to=1-2]
	\arrow[tail, from=1-1, to=2-1]
	\arrow["\lrcorner"{anchor=center, pos=0.125}, draw=none, from=1-1, to=2-2]
	\arrow["{\rest_f(x^*m)}", tail, from=1-2, to=2-2]
	\arrow["x"', from=2-1, to=2-2]
\end{tikzcd}\]
where the restriction $ \rest_f(m)$ is a monomorphism for $\rest_f$ preserves monomorphisms as a right adjoint. 
\end{division}

\begin{proposition}\label{comorphism of sites in term of rest}
A functor $f : \mathcal{C} \rightarrow \mathcal{D}$, the following are equivalent:\begin{itemize}
        \item $f$ defines a cover-lifting functor $(\mathcal{C},J) \rightarrow (\mathcal{D},K)$
        \item for each $d$ in $\mathcal{D}$, each $K$-covering sieve $ R$ over $d$ and each $x : f(c) \rightarrow d$ in $ \mathcal{D}(f,1)(d,c)$, the preimage sieve $ f^{-1}(x^*R)$ is in $J(c)$.
    \end{itemize}  
\end{proposition}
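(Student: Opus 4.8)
The plan is to dualize, essentially verbatim, the argument given for \cref{morphisms of sites in term of lext}, interchanging the roles of left extension and restriction in the way already prepared in \cref{comorphism with rest}. The key observation — exactly as in the cover-preserving case — is that the first bullet is just the instance of the condition in the second bullet at the identity heteromorphism, while the rest of the second bullet is forced by the stability of the target topology under pullback.

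For the implication from cover-lifting to the stated condition, I would assume $f$ is a comorphism of sites and fix $d$ in $\mathcal{D}$, a $K$-covering sieve $R$ on $d$, and a heteromorphism $x \in \mathcal{D}(f,1)(d,c)$, i.e. an arrow $x : f(c) \to d$ in $\mathcal{D}$. By stability of the Grothendieck topology $K$ under pullback, $x^*R$ is again $K$-covering, now on $f(c)$; applying the defining property of a comorphism to this sieve gives $f^{-1}(x^*R) \in J(c)$. That the sieve $f^{-1}(x^*R)$ is the one described in the statement — the set of $u : c' \to c$ admitting a pair $(v,x')$ filling the heteromorphism square of \cref{comorphism with rest}, equivalently the pullback of the preimage sieve along the name of $x$ — is precisely the identification worked out there, so nothing further is needed.

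For the converse I would assume the second bullet and fix an object $c$ together with a $K$-covering sieve $R$ on $f(c)$. Instantiating the hypothesis at $d := f(c)$, at the sieve $R$, and at the identity heteromorphism $\id_{f(c)} \in \mathcal{D}(f(c),f(c)) = \mathcal{D}(f,1)(f(c),c)$, and using $\id_{f(c)}^*R = R$, yields $f^{-1}(R) = f^{-1}(\id_{f(c)}^*R) \in J(c)$; hence every $K$-covering sieve on $f(c)$ has $J$-covering preimage, which is the definition of a comorphism. The hard part, such as it is, lies not in the logic but in the bookkeeping: one must keep straight the identification of $f^{-1}(x^*R)$ with the pullback-of-preimage sieve displayed in \cref{comorphism with rest} (which rests only on $\rest_f$ preserving the relevant monomorphism), and, if one prefers to phrase the comorphism condition as "$f^{-1}(R)$ contains a $J$-covering sieve", one invokes the local character axiom to pass freely to "$f^{-1}(R) \in J(c)$".
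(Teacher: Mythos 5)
Your proof is correct and is exactly the argument the paper intends: the paper in fact omits a written proof of this proposition, leaving it as the evident dual of the proof of \cref{morphisms of sites in term of lext}, with the identification of $f^{-1}(x^*R)$ as the pullback-of-preimage sieve already carried out in \cref{comorphism with rest}. Your two directions (pullback-stability of $K$ plus the comorphism property in one direction, instantiation at the identity heteromorphism in the other) match that intended argument precisely.
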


Having in mind those heteromorphic formulations of cover-preservation and cover-lifting, we are close to exhibit them as two manifestation of a single notion stated in term of distributors. To properly state it, it remains to describe the action of a distributor relative to sieves as we did in the case of functors.

\begin{division}[Left extensions of sieves]
    Let $ H : \mathcal{C} \looparrowright \mathcal{D}$ be a distributor between two sites $(\mathcal{C},J)$ and $(\mathcal{D},K)$. For a sieve $ S \rightarrowtail \hirayo_c $ on an object of $\mathcal{C}$, the left extension of the distributor $ H$ returns at any object $d$ the coend 
    \[  \lext_H(S)(d) = \int^{c' \in \mathcal{C}} H(d,c') \times S(c') \]
    which is the set of all pairs $ (c',x', u') $ with $ x' \in H(d,c')$ and $u \in S(c')$ where one identifies pairs $ (c',x') $ and $(c'',x'')$ related through an arrow $ c' \rightarrow c''$ satisfying the commutation 
\[\begin{tikzcd}
	& {c'} \\
	d && c \\
	& {c''}
	\arrow["{u'}", from=1-2, to=2-3]
	\arrow["u"{description}, from=1-2, to=3-2]
	\arrow["{x'}", squiggly, from=2-1, to=1-2]
	\arrow["{x''}"', squiggly, from=2-1, to=3-2]
	\arrow["{u''}"', from=3-2, to=2-3]
\end{tikzcd}\]


At this point, $\lext_H(S)\rightarrow \widehat{H}(c)$ needs not be a monomorphism (although it is under assumption of flatness), so in full generality we must consider its image factorization 
\[\begin{tikzcd}
	{\lext_H(S)} && {\widehat{H}(c)} \\
	& {H[S]}
	\arrow["{\lext_H(m)}", from=1-1, to=1-3]
	\arrow[tail, from=1-1, to=2-2]
	\arrow[two heads, from=2-2, to=1-3]
\end{tikzcd}\]

For $d$ in $\D$, the presheaf $ H[S]$ returns the set of all $ x \in H(d,c)$ such that there exists a map $ u' : c' \rightarrow c$ and an heteromorphism $ x' \in H(d,c')$ such that $ H(d,u') (x') = x$, or in diagram
\[\begin{tikzcd}
	& {c'} \\
	{d} & c
	\arrow["{u'}", from=1-2, to=2-2]
	\arrow["{x'}", squiggly, from=2-1, to=1-2]
	\arrow["x"', squiggly, from=2-1, to=2-2]
\end{tikzcd}\]

Hence, though the lextension of $H$ does not return properly speaking a sieve over some object, it induces a subobject $ H[S]$ of the distributor itself, and at each heteromorphism, it looks locally as a sieve: indeed, by Yoneda lemma, an heteromorphism $ x \in H(d,c)$ is a transformation $x : \hirayo_d \rightarrow \widehat{H}(c) $, along which the pullback of the extension produces a sieve on $d$
\[\begin{tikzcd}
	{x^*H[S]} & {H[S]} \\
	{\hirayo_d} & {\widehat{H}(c)}
	\arrow[from=1-1, to=1-2]
	\arrow[tail, from=1-1, to=2-1]
	\arrow["\lrcorner"{anchor=center, pos=0.125}, draw=none, from=1-1, to=2-2]
	\arrow[tail, from=1-2, to=2-2]
	\arrow["x"', from=2-1, to=2-2]
\end{tikzcd}\]

Concretely, this object $ x^*H[S]$ returns at an object $ d'$ of $ \mathcal{D}$ the set of all morphisms $ v: d' \rightarrow d$ such that there is a morphism $ u : c' \rightarrow c$ in $S$ for some $c$ together with an heteromorphism $ x' : d' \rightarrow c'$ such that one has the equality in $H(d',c)$ 
\[ H(v,c)(x) = H(d',u)(x') \]
which visualizes in diagram as a factorization 
\[\begin{tikzcd}
	{d'} & {c'} \\
	d & c
	\arrow["{x'}", squiggly, from=1-1, to=1-2]
	\arrow["v"', from=1-1, to=2-1]
	\arrow["{u'}", from=1-2, to=2-2]
	\arrow["x"', squiggly, from=2-1, to=2-2]
\end{tikzcd}\]

\end{division}

This invites us to give the following definition:

\begin{definition}\label{cover-distribution}
A distributor $ H : \mathcal{C} \looparrowright \mathcal{D}$ between sites $ (\mathcal{C}, J)$ and $(\mathcal{D},K)$ will be said to be \emph{cover-distributing from $J$ to $K$} if for any pair $(d,c)$, any heteromorphism $ x \in H(d,c)$ and any $J$-covering sieve $ S$ on $c$, the sieve $ x^*H[S]$ is $K$-covering on $d$.
\end{definition}

\begin{lemma}\label{composition of cover dist distributors}
    Let be $H :  (\C,J) \looparrowright (\D,K)$ and $ G : (\D,K) \looparrowright (\B,L)$ distributors that are respectively $ (J,K)$-cover-distributing and $(K,L)$-cover-distributing; then the composite $ G \otimes H$ is $ (J,L)$-cover-distributing. Moreover for any site $ (\C,J)$ the unit $ \hom_\C$ is $(J,J)$-cover-distributing. 
\end{lemma}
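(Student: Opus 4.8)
The plan for the composite is to chain the two cover-distributivity hypotheses together through a chosen representative of the heteromorphism. Fix $c$ in $\C$, a $J$-covering sieve $S$ on $c$, an object $b$ of $\B$ and a heteromorphism $z \in (G\otimes H)(b,c)$; the goal is that $z^\ast(G\otimes H)[S]$ be $L$-covering on $b$, and since a sieve containing an $L$-covering sieve is itself $L$-covering, it suffices to exhibit an $L$-covering subsieve of it. First I would pick a representative $(d\,;\,x,y)$ of $z$ in the coend $(G\otimes H)(b,c) = \int^{d \in \D} H(d,c)\times G(b,d)$, with $x\in H(d,c)$ and $y\in G(b,d)$. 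Then $(J,K)$-cover-distributivity of $H$ tells us that $T := x^\ast H[S]$ is a $K$-covering sieve on $d$, and $(K,L)$-cover-distributivity of $G$, applied to this sieve $T$ on $d$ together with the heteromorphism $y \in G(b,d)$, tells us that $y^\ast G[T]$ is $L$-covering on $b$. The claim then reduces to the inclusion of sieves on $b$
\[ y^\ast G\bigl[\,x^\ast H[S]\,\bigr] \ \subseteq\ z^\ast(G\otimes H)[S]. \]

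Proving this inclusion is the technical heart of the argument and the only step that demands genuine care; it is a matter of unwinding the pointwise descriptions of $x^\ast H[-]$, of $G[-]$, and of $(G\otimes H)[-]$ recalled above. A morphism $w : b'\to b$ lies in $y^\ast G[T]$ exactly when there are $v : d'\to d$ in $T$ and a heteromorphism $y'\in G(b',d')$ with $G(w,d)(y) = G(b',v)(y')$; and $v\in T = x^\ast H[S]$ means there are $u : c'\to c$ in $S$ and a heteromorphism $x'\in H(d',c')$ with $H(v,c)(x) = H(d',u)(x')$. I would then check that this data exhibits $w$ as an element of $z^\ast(G\otimes H)[S]$, with witnesses the arrow $u\in S$ and the heteromorphism $z' := [(d'\,;\,x',y')] \in (G\otimes H)(b',c')$: a direct computation of the functorial actions gives $(G\otimes H)(w,c)(z) = [(d\,;\,x,\,G(w,d)(y))]$ and $(G\otimes H)(b',u)(z') = [(d'\,;\,H(d',u)(x'),\,y')]$, and these become equal once one applies the coend identification along $v : d'\to d$, namely $[(d'\,;\,H(v,c)(x),\,y')] = [(d\,;\,x,\,G(b',v)(y'))]$, and substitutes the two displayed equalities. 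The subtlety, such as it is, lies entirely in this coend bookkeeping --- keeping straight which leg of the diamond $b'\rightsquigarrow d'\rightsquigarrow c'$ is contravariant (the $H$-side, toward $\C$) and which is covariant (the $G$-side, toward $\B$); note that no question of well-definedness over the choice of representative of $z$ arises, since $z^\ast(G\otimes H)[S]$ depends only on $z$ itself and the representative merely serves to produce the subsieve.

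Granting the inclusion, $z^\ast(G\otimes H)[S]$ contains the $L$-covering sieve $y^\ast G[x^\ast H[S]]$ and is therefore $L$-covering, so $G\otimes H$ is $(J,L)$-cover-distributing. For the unit $\hom_\C$ the argument is immediate. Since $\widehat{\hom_\C} = \hirayo_\C$, one has $\lext_{\hom_\C}\simeq\mathrm{id}_{\widehat{\C}}$, so for any sieve $S\rightarrowtail\hirayo_c$ the comparison $\lext_{\hom_\C}(S)\to\widehat{\hom_\C}(c) = \hirayo_c$ is already the monomorphism $S\rightarrowtail\hirayo_c$, whence $\hom_\C[S] = S$. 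A heteromorphism from $c'$ to $c$ for $\hom_\C$ is just an arrow $f : c'\to c$, and $f^\ast\hom_\C[S]$ is then nothing but the ordinary pullback sieve $f^\ast S = \{\,g : c''\to c' \mid fg\in S\,\}$, which is $J$-covering on $c'$ by the stability axiom of the topology $J$. Hence $\hom_\C$ is $(J,J)$-cover-distributing.
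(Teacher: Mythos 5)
Your proof is correct and follows essentially the same route as the paper's: choose a representative $(x,y)$ of the coend class, apply the two cover-distributivity hypotheses in sequence to get that $y^*G[x^*H[S]]$ is $L$-covering, and conclude by showing this sieve is contained in $z^*(G\otimes H)[S]$, with the unit case reducing to pullback-stability of $J$. Your treatment of the coend identification in the inclusion step is in fact somewhat more explicit than the paper's.
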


\begin{proof}
    Let be $H,G$ as above; recall that $G\otimes H(b,c)$ is a coend of products of the form $ H(d,c) \times G(b,d)$, and we must show that for any element $ [(x,y)]_\sim$ of this coend, the corresponding pullback $[(x,y)]_\sim^*(G \otimes H)[S] $ is in $L(d)$. For a pair $ x \in H(d,c)$ and $y \in G(b,d)$ representing such an element, one has for each $S \in J(c)$ that $ x^*H[S] $ is in $K(d)$, and hence that $ y^*G[x^*H[S]]$ is in $L(b)$; this set is the set of all arrows $ w: b' \rightarrow b$ such that there is an arrow $ v: d' \rightarrow d$ and subsequently an arrow $ u : c' \rightarrow c$ in $S$ together with heteromorphisms as below
\[\begin{tikzcd}
	{b'} & {d'} & {c'} \\
	b & d & c
	\arrow["{y'}", squiggly, from=1-1, to=1-2]
	\arrow["w"', from=1-1, to=2-1]
	\arrow["{x'}", squiggly, from=1-2, to=1-3]
	\arrow["v"{description}, from=1-2, to=2-2]
	\arrow["u", from=1-3, to=2-3]
	\arrow["y"', squiggly, from=2-1, to=2-2]
	\arrow["x"', squiggly, from=2-2, to=2-3]
\end{tikzcd}\]
    Hence any $w : b' \rightarrow b$ in $y^*G[x^*H[S]]$ provides in particular an element of $[(x,y)]_\sim^*(G \otimes H)[S]$ after forgetting which representing element $(x,y)$ was chosen, and hence the sieve $[(x,y)]_\sim^*(G \otimes H)[S]$ is $L$-covering for it contains the sieve $y^*G[x^*H[S]]$ which is itself $L$-covering. Finally cover-distributy of $\hom_\C$ from $J$ to $J$ exactly expresses that $ J$ is pullback stable. 
\end{proof}

\begin{proposition}\label{cover dist subsume cover pres and cover lift}
    Let $ (\mathcal{C}, J)$ and $(\mathcal{D},K)$ be two sites; then: \begin{itemize}
        \item a functor $f : \mathcal{C} \rightarrow \mathcal{D}$ is cover-preserving if and only if $\mathcal{D}(1,f) : \mathcal{C} \looparrowright \mathcal{D}$ is cover-distributing;
        \item a functor $f : \mathcal{D} \rightarrow \mathcal{C}$ is cover-reflecting if and only if $\mathcal{C}(f,1) : \mathcal{C} \looparrowright \mathcal{D}$ is cover-distributing.
    \end{itemize}
\end{proposition}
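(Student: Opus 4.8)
The plan is to reduce each of the two equivalences to the heteromorphic reformulations of cover-preservation and cover-reflection already obtained in \cref{morphisms of sites in term of lext} and \cref{comorphism of sites in term of rest}. In both cases the key point is to identify the subobject $H[S]$ of $\widehat{H}$, together with its pullbacks $x^*H[S]$ along heteromorphisms, with the classical sieve operations $f[-]$ and $f^{-1}(-)$; the bridge is the identification of the left extension of a representable distributor with the corresponding functorial (co)restriction recorded in \cref{Extensions of representable distributors}.

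For the first item I take $H = \mathcal{D}(1,f)$ with $f : \mathcal{C} \to \mathcal{D}$. Its classifying functor is $\widehat{H}(c) = \hirayo_{f(c)}$ and $\lext_H = \lext_f$ by \cref{Extensions of representable distributors}, so for a $J$-covering sieve $S$ on $c$ the image factorization of $\lext_H(S)\to\widehat{H}(c)$ is by construction the very factorization defining the image sieve $f[S]\rightarrowtail\hirayo_{f(c)}$; hence $H[S] = f[S]$. A heteromorphism $x\in H(d,c)$ is exactly an arrow $x:d\to f(c)$, and unwinding the pullback square defining $x^*H[S]$ identifies it with the pullback sieve $x^*f[S]$ on $d$. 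Consequently the condition of \cref{cover-distribution} for $\mathcal{D}(1,f)$ is verbatim the second condition of \cref{morphisms of sites in term of lext}, which that proposition already shows equivalent to $f$ being cover-preserving.

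For the second item I take $H = \mathcal{C}(f,1)$ with $f : \mathcal{D}\to\mathcal{C}$, so $\widehat{H}(c) = \mathcal{C}(f,c)$ and $\lext_H = \rest_f$, again by \cref{Extensions of representable distributors}. Since $\rest_f$ is precomposition with $f^{\op}$ it preserves monomorphisms (as recalled at the end of \cref{comorphism with rest}), so $\lext_H(S)\to\widehat{H}(c)$ is already monic and $H[S] = \rest_f(S)$, whose value at $d'$ is the set of arrows $f(d')\to c$ lying in $S$. A heteromorphism $x\in H(d,c)$ is an arrow $x:f(d)\to c$, and a one-line computation identifies $x^*H[S]$ at $d'$ with the set of $v:d'\to d$ such that $x\circ f(v)\in S$, i.e. with the preimage sieve $f^{-1}(x^*S)$ on $d$. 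Thus \cref{cover-distribution} for $\mathcal{C}(f,1)$ says precisely that for every $c$, every $S\in J(c)$ and every $x:f(d)\to c$ the sieve $f^{-1}(x^*S)$ is $K$-covering on $d$, which is the condition of \cref{comorphism of sites in term of rest} read for the functor $f:\mathcal{D}\to\mathcal{C}$ with the roles of the two sites interchanged, equivalent to $f$ being cover-reflecting.

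The only genuine work, and the step I expect to require the most care, is this bookkeeping: for $\mathcal{D}(1,f)$ one must note that the (generally nontrivial) image factorization hidden in the definition of $H[S]$ is by construction the one used to define $f[S]$, while for $\mathcal{C}(f,1)$ one must instead observe that the factorization is trivial because $\rest_f$ preserves monos, and then track the variance of $f$ carefully so that $x^*S$ and $f^{-1}(-)$ line up with \cref{comorphism of sites in term of rest}. Everything else is a direct translation between the relevant definitions.
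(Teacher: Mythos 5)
Your proof is correct and follows essentially the same route as the paper's: both items are reduced to the heteromorphic reformulations of \cref{morphisms of sites in term of lext} and \cref{comorphism of sites in term of rest} via the identifications $\lext_{\mathcal{D}(1,f)} = \lext_f$ and $\lext_{\mathcal{C}(f,1)} = \rest_f$ from \cref{Extensions of representable distributors}, with the same observation that the image factorization is the defining one for $f[S]$ in the first case and is trivial in the second because $\rest_f$ preserves monomorphisms. The only cosmetic difference is that you justify mono-preservation by pointwiseness of precomposition where the paper invokes $\rest_f$ being a right adjoint; both are valid.
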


\begin{proof}
 Both items rely on the formulas given at \cref{Extensions of representable distributors}.
 For the first item, take $f : \mathcal{C} \rightarrow \mathcal{D}$: one has for any sieve $ S$ in $J(c)$ in $\mathcal{C}$ that $\lext_{\mathcal{D}(1,f)}(S) = \lext_f(S)$, so they have the same image factorization, and hence for any $x : d \rightarrow f(c)$, corresponding to an heteromorphism $ x \in \mathcal{D}(1,f(c))$, the equality between the pullbacks. Hence, asking $f$ to be cover-preserving, which amounts by \cref{morphisms of sites in term of lext} to asking the pullback sieve $ x^*f[S]$ to be $K$-covering, amounts in turn to ask $ \mathcal{D}(1,f)$ to be cover-distributing.

 For the second item, take $f : \mathcal{D} \rightarrow \mathcal{C}$: for any sieve $ S$ in $J(c)$ in $\mathcal{C}$ and any $ x : f(d) \rightarrow c$ in $\mathcal{C}(f,c)$ the pullback sieve $ x^*S$ is in $J(f(d))$, and asking $f$ to be a comorphism of sites amounts by \cref{comorphism of sites in term of rest} to asking $f^{-1}(x^*S)$ to be $K$-covering for each $x$; but recall that for any object $ d'$ in $\mathcal{D}$, the set $ f^{-1}(x^*S)(d')$ consists exactly of those arrows $ v : d' \rightarrow d$ in $\mathcal{D}$ such that there is $u : c' \rightarrow c$ in $S$ and a factorization 
\[\begin{tikzcd}
	{f(d')} & {c'} \\
	{f(d)} & c
	\arrow["{x'}", from=1-1, to=1-2]
	\arrow["{f(v)}"', from=1-1, to=2-1]
	\arrow["{u'}", from=1-2, to=2-2]
	\arrow["x"', from=2-1, to=2-2]
\end{tikzcd}\]
But those are exactly the object of the pullback sieve evaluated at $d'$
\[\begin{tikzcd}
	{x^*H[S](d')} & {H[S](d')} \\
	{\mathcal{D}(d',d)} & {\mathcal{C}(f(d),c)}
	\arrow[from=1-1, to=1-2]
	\arrow[tail, from=1-1, to=2-1]
	\arrow["\lrcorner"{anchor=center, pos=0.125}, draw=none, from=1-1, to=2-2]
	\arrow[tail, from=1-2, to=2-2]
	\arrow["x"', from=2-1, to=2-2]
\end{tikzcd}\]

More conceptually, recall from \cref{Extensions of representable distributors} that we have a natural isomorphism $ \lext_{\C(f,1)} \simeq \rest_f$. Hence $ \lext_{\C(f,1)} $ is lex, and from \cref{comorphism of sites in term of rest}, $ f^{-1}(x^*S)$ is the fiber of $ \rest_f(x^*S)$, so we have an isomorphism $ f^{-1}(x^*S) \simeq x^*\lext_{\C(f,1)}(S) \simeq x^*\C(f,1)[S] $ where the last isomorphism comes from the fact that $\lext_{\C(f,1)}$ preserves mono since $ \rest_f$ does. Hence asking $f$ to be cover-reflecting amounts to asking $\mathcal{C}(f,1)$ to be cover-distributing.
\end{proof}

\begin{lemma}\label{cover dist implies send cover to epifam}
    Let $ H : \C \looparrowright \D$ a distributor between sites $(\C,J)$, $ (\D,K)$; then $H$ is cover-distributing if and only if $\mathfrak{a}_K\widehat{H}$ sends $J$-covers to epimorphic families in $ \widehat{\D}_K$.  
\end{lemma}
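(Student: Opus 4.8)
The plan is to reduce both conditions to the single combinatorial statement of \cref{cover-distribution}, via the image factorization that defines $H[S]$ together with the standard description of the monomorphisms inverted by $\mathfrak{a}_K$.

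First I would fix a $J$-covering sieve $S$ on an object $c$ of $\C$, regarded as the covering family of all its arrows $u : c' \to c$. Recall that $\widehat{H}(c) = \lext_H(\hirayo_c)$ and that, since $\lext_H$ is cocontinuous and $S$ is the colimit $\colim_{(c'\to c)\in S}\hirayo_{c'}$ of representables along its category of elements, the map $\lext_H(S) \to \widehat{H}(c)$ induced by $S \rightarrowtail \hirayo_c$ restricts on the summand indexed by $u : c' \to c$ to $\widehat{H}(u) : \widehat{H}(c') \to \widehat{H}(c)$. Hence the image of the canonical comparison $\coprod_{(u:c'\to c)\in S}\widehat{H}(c') \to \widehat{H}(c)$ coincides with the image of $\lext_H(S) \to \widehat{H}(c)$, that is, with the subobject $H[S] \rightarrowtail \widehat{H}(c)$. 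Because $\mathfrak{a}_K$ is a left adjoint that is moreover left exact, it preserves coproducts and the (regular epi, mono) factorization, so applying it to $\coprod_{u}\widehat{H}(c') \twoheadrightarrow H[S] \rightarrowtail \widehat{H}(c)$ produces the image factorization of the comparison map for the family $\{\mathfrak{a}_K\widehat{H}(u)\}_{u\in S}$ in $\widehat{\D}_K$. Consequently that family is epimorphic in $\widehat{\D}_K$ if and only if $\mathfrak{a}_K$ sends the monomorphism $H[S] \rightarrowtail \widehat{H}(c)$ to an isomorphism.

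Next I would invoke the classical fact that a monomorphism $A \rightarrowtail B$ in $\widehat{\D}$ is inverted by $\mathfrak{a}_K$ exactly when it is $K$-dense, i.e. when for every $d$ in $\D$ and every $b : \hirayo_d \to B$ the pullback sieve $b^*A$ on $d$ is $K$-covering. Specializing to $A \rightarrowtail B$ equal to $H[S] \rightarrowtail \widehat{H}(c)$ and to the maps $x : \hirayo_d \to \widehat{H}(c)$, which by the Yoneda lemma are precisely the heteromorphisms $x \in H(d,c)$, the pullback sieve $x^*H[S]$ is exactly the sieve on $d$ already computed above in the discussion of left extensions of sieves (the set of $v : d' \to d$ admitting a factorization through some $u \in S$ and some heteromorphism $x'$). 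Thus ``$\mathfrak{a}_K\widehat{H}$ sends the cover $S$ to an epimorphic family'' is equivalent to ``$x^*H[S] \in K(d)$ for all $d$ in $\D$ and all $x \in H(d,c)$''. Letting $c$ and $S \in J(c)$ range over all objects and all covering sieves, this is verbatim the condition of \cref{cover-distribution}, so the two statements of the lemma are equivalent.

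The only genuinely non-formal input is the dense-monomorphism characterization of the monos inverted by sheafification (a classical fact about Grothendieck topologies, which can be cited), together with the verification that ``take the image'' commutes with ``apply $\mathfrak{a}_K$''; everything else is unwinding the coend presentation of $\lext_H(S)$ already recorded in the excerpt. I expect the main point to watch is the identification of $H[S]$ with the image of the family $\{\widehat{H}(u)\}_{u \in S}$ rather than with the image of $S$ under $\lext_H$ seen as a subobject of $\widehat{H}(c)$ --- these coincide, but the coincidence must be made explicit --- and the corresponding identification of $x^*H[S]$ with the relevant pullback sieve.
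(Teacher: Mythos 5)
Your proof is correct and follows essentially the same route as the paper's: both arguments reduce the epimorphicity of the family in $\widehat{\D}_K$ to the monomorphism $H[S] \rightarrowtail \widehat{H}(c)$ being $K$-dense (the paper phrases this as the comparison map from the coproduct being a $K$-local epimorphism), and then identify the resulting pullback sieves along heteromorphisms $x : \hirayo_d \to \widehat{H}(c)$ with the sieves $x^*H[S]$ of \cref{cover-distribution}. Your version merely makes explicit the intermediate steps (preservation of the image factorization by $\mathfrak{a}_K$ and the dense-mono characterization) that the paper compresses into the phrase ``$K$-local epimorphism''.
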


\begin{proof}
    Let be $ (u_i :c_i \rightarrow c)_{i \in I}$ a cover generating a $J$-covering sieve $S$. We must prove the induced map $\coprod_{i \in I} \widehat{H}(c_i) \rightarrow \widehat{H}(c)$ to be a $K$-local epimorphism in $\widehat{\D}$. This condition amounts to asking that for any $d$ in $\D$ and any $x \in H(d,c)$, there is a $K$-cover $(v_j: d_j \rightarrow d)_{j \in J}$ such that for any $ j \in J$, there is an antecedent $ x_j \in H(d_j,c_i)$ for some $i \in I$: but this condition exactly means that the sieve $R$ generated form $(v_j: d_j \rightarrow d)_{j \in J}$ is included in $x^*H[S]$: whence the equivalence between the two conditions. 
\end{proof}



Cover-distributivity plays the same role as cover-preservation does in the definition of morphisms of sites; but the latter involves also a second condition of \emph{cover-flatness} which is a generalization of flatness for functor valuated in sheaf topoi; it is a form of representable flatness up to epimorphic family. As much as flatness for functor valuated in presheaves generalized for distributor, we can introduce the following notion:

\begin{definition}\label{cover-flat distributor}
   Let $\C$ be a (small) category and $ (\D,K)$ a site; a distributor $ H : \C \looparrowright \mathcal{D}$ will be said to be \emph{$K$-flat} if:
	\begin{enumerate}

		\item For any $d$ of $\cal D$, the following sieve is $K$-covering 
  \[ {\{}v:d'\to d \mid \exists c\in {\cal C}, H(d',c)\neq \emptyset {\}} \] 
		
		\item For any $c, c'$, $d$ and $x \in H(d,c)$, $x' \in H(d,c')$, the following sieve is $K$-covering
		\[ \Bigg{\{}v:d'\to d \mid \begin{array}{l}
		    \exists u:c''\to c, \\ \exists u':c''\to  c' \\ \exists x'' \in H(d', c'')
		\end{array}  \textup{ such that } 	 
\begin{tikzcd}[row sep=small]
	{d'} & {c''} && c \\
	d &&& {c'}
	\arrow["{x''}", squiggly, from=1-1, to=1-2]
	\arrow["v"', from=1-1, to=2-1]
	\arrow["u", from=1-2, to=1-4]
    \arrow["x"{pos=0.2}, squiggly, from=2-1, to=1-4]
	\arrow["{u'}"{pos=0.7}, from=1-2, to=2-4]	
	\arrow["{x'}"', squiggly, from=2-1, to=2-4]
\end{tikzcd}  \Bigg{\}}  \]

		\item For any $u, u': c' \rightrightarrows c$ in $\cal C$, $d$ of $\cal D$ and $x \in H(d, c')$ with $H(d,u)(x)=H(d, u')(x)$, the following sieve is $K$-covering:	
	\[ 
		\Bigg{\{}v:d'\to d \mid \begin{array}{l}
		    \exists w : c'' \rightarrow c'  \\
		    \exists x' \in H(d', c'')
		\end{array} \textup{ such that } 
\begin{tikzcd}[row sep=small]
	{d'} & {c''} \\
	d & {c'} & c
	\arrow["{x'}", squiggly, from=1-1, to=1-2]
	\arrow["v"', from=1-1, to=2-1]
	\arrow["w"', from=1-2, to=2-2]
	\arrow["{uw= u'w}", from=1-2, to=2-3]
	\arrow["x"', squiggly, from=2-1, to=2-2]
	\arrow["u", shift left, from=2-2, to=2-3]
	\arrow["{u'}"', shift right, from=2-2, to=2-3]
\end{tikzcd} \Bigg{\}}
\]		
	\end{enumerate}
\end{definition}

\begin{remark}\label{cover-flatness for distributor synthetic form}
    We could synthetisize this condition into a more direct analog of cover-flatness as defined in \cite{shulman2012exact} and \cite{karazeris2004notions}: we could say that $H : \C \looparrowright \D$ is $K$-flat if for any finite diagram $(c_i)_{i \in I}$ in $\C$, any $d $ in $\D$ and any $ (x_i  \in H(d,c_i))_{i \in I}$ a compatible family of heteromorphisms, the family 
    \[  \Big{\{} v : d' \rightarrow d \mid \textup{ there is a cone } (u_i : c \rightarrow c_i)_{i \in I} \textup{ and } x \in H(d',c)  \textup{ with } 
\begin{tikzcd}
	{d'} & c \\
	d & {c_i}
	\arrow["x", squiggly, from=1-1, to=1-2]
	\arrow["v"', from=1-1, to=2-1]
	\arrow["{u_i}", from=1-2, to=2-2]
	\arrow["{x_i}"', squiggly, from=2-1, to=2-2]
\end{tikzcd} \Big{\}} \]
is to be $K$-covering. Remark that, as for cover-flatness of functors, this condition does not involve any coverage on the domain category. 
\end{remark}

\begin{lemma}\label{K-flat distributor equivalences}
    For a distributor $H: \mathcal{C} \looparrowright \mathcal{D}$, the following conditions are equivalent:\begin{itemize}
        \item $H$ is a $K$-flat distributor;
        \item the induced functor $ \mathfrak{a}_K\widehat{H } :  \C \rightarrow \widehat{\mathcal{D}}_K$ is flat;
        \item the induced functor $\mathfrak{a}_K\lext_H \mathfrak{i}_J : \widehat{\C}_J \rightarrow \widehat{\D}_K$ is lex.
    \end{itemize}   
\end{lemma}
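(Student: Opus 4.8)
The plan is to take condition (2) --- flatness of the composite functor $\mathfrak a_K\widehat H : \C \to \widehat\D_K$ --- as the pivot, deriving $(1)\Leftrightarrow(2)$ from the synthetic form of $K$-flatness together with the standard site-theoretic characterisation of flat functors valued in a Grothendieck topos, and $(2)\Leftrightarrow(3)$ from the identity $\lext_H = \lan_{\hirayo_\C}\widehat H$ and the familiar correspondence between flat functors and their lex cocontinuous left Kan extensions.

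For $(1)\Leftrightarrow(2)$: by \cref{cover-flatness for distributor synthetic form}, $H$ being $K$-flat is equivalent to the requirement that, for every finite diagram $(c_i)_{i\in I}$ in $\C$, every $d$ in $\D$ and every compatible family $(x_i \in H(d,c_i))_{i\in I}$, the sieve of those $v : d' \to d$ admitting a cone $(u_i : c \to c_i)_{i\in I}$ and a heteromorphism $x \in H(d',c)$ with $H(v,c_i)(x_i) = H(d',u_i)(x)$ for all $i$ is $K$-covering. On the other hand, $\mathfrak a_K\widehat H$ is flat precisely when the analogous cone-completion families --- a jointly epimorphic family of maps to the terminal object, weak binary products, weak equalizers --- are epimorphic in $\widehat\D_K$, this being the usual reformulation of flatness via the three filtering conditions. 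Since the sheafified representables $\mathfrak a_K\hirayo_d$ form a separating family in $\widehat\D_K$, those epimorphicity conditions may be tested against the $\mathfrak a_K\hirayo_d$; and by the adjunction $\mathfrak a_K \dashv \mathfrak i_K$ together with the Yoneda lemma, a map $\mathfrak a_K\hirayo_d \to \mathfrak a_K\widehat H(c) = \mathfrak a_K(H(-,c))$ is a section of the sheaf $\mathfrak a_K(H(-,c))$ over $d$, hence on a suitable $K$-covering sieve of $d$ is represented by honest heteromorphisms of $H$. Refining along such a sieve is harmless, a refinement of a $K$-covering sieve being again $K$-covering and the filtering conditions being stated locally; moreover, equality of sections of a sheafification is $K$-local equality of the underlying presheaf sections, which is exactly the compatibility appearing in the parallel-pair case. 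After this translation the ``epimorphic cone-completion family'' condition becomes verbatim the $K$-covering sieve of \cref{cover-flatness for distributor synthetic form}, giving $(1)\Leftrightarrow(2)$.

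For $(2)\Leftrightarrow(3)$: from $\lext_H = \lan_{\hirayo_\C}\widehat H$ and the fact that $\mathfrak a_K$ is a left adjoint one gets $\mathfrak a_K\lext_H = \lan_{\hirayo_\C}(\mathfrak a_K\widehat H) : \widehat\C \to \widehat\D_K$. By the standard fact that the cocontinuous extension $\lan_{\hirayo_\C}G$ of a functor $G : \C \to \mathcal E$ into a cocomplete topos is lex iff $G$ is flat --- the presheaf-valued case being the equivalence between flatness of a distributor and lexness of its left extension established earlier --- condition (2) is equivalent to $\mathfrak a_K\lext_H$ being lex. Since $\mathfrak i_J$ is a right adjoint it preserves all limits, and finite limits in $\widehat\C_J$ are computed as in $\widehat\C$; hence $\mathfrak a_K\lext_H$ lex implies $\mathfrak a_K\lext_H\mathfrak i_J$ lex, which is (3). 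For the converse one recovers flatness of $\mathfrak a_K\widehat H$ by verifying the three conditions of \cref{cover-flat distributor} one at a time --- each of them a statement about a specific finite limit of sheafified representables, all lying in the image of the fully faithful, limit-reflecting $\mathfrak i_J$, together with the canonical maps into it from further such objects --- directly from the hypothesis that $\mathfrak a_K\lext_H\mathfrak i_J$ preserves that limit, using in the first condition that the terminal presheaf is automatically a $J$-sheaf.

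The main obstacle is exactly this last step, the implication $(3)\Rightarrow(2)$: one cannot simply say ``$\mathfrak a_K\lext_H\mathfrak i_J$ lex implies $\mathfrak a_K\lext_H$ lex'' by pre-composing with the sheafification $\mathfrak a_J$, since --- without a cover-distributivity hypothesis --- there is no reason for $\mathfrak a_K\lext_H$ to invert $J$-local isomorphisms, so it need not coincide with $\mathfrak a_K\lext_H\mathfrak i_J\mathfrak a_J$ on non-sheaf presheaves such as a non-subcanonical representable. The remedy is to keep everything at the level of the filtering conditions of \cref{cover-flat distributor}, whose ambient finite limits are of sheafified representables and hence are seen by $\mathfrak a_K\lext_H\mathfrak i_J$; the care required lies in matching, on the two sides of the translation, the precise $K$-covering sieves that get produced.
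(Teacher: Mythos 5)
Your treatment of the equivalence between (1) and (2), and of the implication (2) $\Rightarrow$ (3), matches the paper's proof in substance: the paper likewise translates the three clauses of \cref{cover-flat distributor} into the three Mac Lane--Moerdijk filtering conditions for $\mathfrak{a}_K\widehat{H}$. The only organisational difference is that the paper reduces epimorphicity of a family in $\widehat{\mathcal{D}}_K$ to a $K$-local epimorphism out of a pointwise coproduct in $\widehat{\mathcal{D}}$, whose target --- a finite limit of the presheaves $\widehat{H}(c)$, preserved by the lex functor $\mathfrak{a}_K$ --- has honest heteromorphisms as its elements; this spares the detour you take through sections of sheafified representables, their local representability and $K$-local equality. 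Your detour is correct, just heavier.

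The genuine problem is the implication (3) $\Rightarrow$ (2), and you have only half-diagnosed it: you rightly observe that one cannot precompose with $\mathfrak{a}_J$, but your proposed repair does not work either. The filtering conditions for $G = \mathfrak{a}_K\widehat{H}$ are statements about the objects $G(c) = (\lan_{\hirayo_\C}G)(\hirayo_c)$ and about finite limits of \emph{representables}; these are not in the image of $\mathfrak{i}_J$ unless $J$ is subcanonical, and there is no general comparison between $(\lan_{\hirayo_\C}G)(\hirayo_c)$ and $(\lan_{\hirayo_\C}G)(\mathfrak{a}_J\hirayo_c)$ --- precisely because, as you yourself note, $\lan_{\hirayo_\C}G$ need not invert $J$-bidense maps. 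So knowing that $\mathfrak{a}_K\lext_H\mathfrak{i}_J$ preserves limits of sheafified representables says nothing about the limits of the $G(c)$ that flatness requires. In fact the implication is false in the stated generality: take $\C = \{a\to b\}$ with $J$ generated by declaring the sieve $\{a\to b\}$ to cover $b$ (so that $\widehat{\C}_J\simeq\Set$, a sheaf being a presheaf whose restriction map is a bijection), let $\D=\mathbf{1}$ carry the trivial topology, and let $H$ be the functor $a\mapsto\{x_1,x_2\}$, $b\mapsto\{*\}$. Then $\mathfrak{a}_K\lext_H\mathfrak{i}_J\cong\mathrm{id}_{\Set}$ is lex, yet $H$ violates clause (2) of \cref{cover-flat distributor} for the pair $x_1,x_2\in H(d,a)$, and correspondingly $\lext_H$ sends $\hirayo_a\times\hirayo_a\cong\hirayo_a$ to a two-element set rather than a four-element one. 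No amount of care in matching covering sieves can close this gap; one needs an additional hypothesis (for instance that $\mathfrak{a}_K\lext_H$ inverts $J$-bidense morphisms, or that $J$ is subcanonical). For what it is worth, the paper's own proof commits the same elision: it establishes that (1) and (2) are equivalent to lexness of $\mathfrak{a}_K\lext_H$ on all of $\widehat{\C}$ and then identifies this, without comment, with condition (3); every subsequent application of the lemma in the paper uses only the equivalence of (1) with (2).
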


\begin{proof}
    The induced $ \widehat{H}$ sends $ c$ to the presheaf $ H(d)$; by \cref{cover dist implies send cover to epifam} we know that, since $H$ is in particular cover distributive, $\mathfrak{a}_K\widehat{H}$ sends $J$-cover to epimorphic families in $\widehat{\D}_K$; it remains to show that it is also flat. To do this we show that $ \mathfrak{a}_K \widehat{H}$ is filtering in the sense of \cite{maclane&moerdijk}[VII.8 definition 1]. In each case, one must show that some family involving $ \mathfrak{a}_K\widehat{H}$ is epimorphic in $\widehat{\D}_K$, but again this amounts to showing that the corresponding family defines a local epimorphism from a coproduct in $\widehat{\D}$, where the coproduct is computed pointwisely, and one recovers an equivalence between each of the three requirements of filteredness with the corresponding statement in our definition of flatness: \begin{itemize}
        \item asking the family $ \{ \widehat{H}(c) \rightarrow 1 \mid c \in \C \}$ to be locally epimorphic amounts to asking that at each $d$, the set of all $v : d' \rightarrow d$ such that the singleton as an antecedent in $H(d',c)$ for some $c$ in $\C$ is $K$-covering: this is exactly equivalent to our condition (1);
        \item for a pair of objects $c,c'$ in $\C$, asking the family $\{ (\widehat{H}(u),\widehat{H}(u')) :  \widehat{H}(c'') \rightarrow \widehat{H}(c) \times \widehat{H}(c') \mid  u: c'' \rightarrow c, u':c'' \rightarrow c' \}$ to be locally epimorphic amounts to asking that at each $d$, and any $(x,x') \in (\widehat{H}(c) \times \widehat{H}(c'))(d) \simeq H(d,c) \times H(d,c') $, the set of all $v :d' \rightarrow d$ such that there is some $x'' \in H(d',c'')$ for some span $u : c'' \rightarrow c, u' : c'' \rightarrow c'$ with $ H(d',u)(x'') = H(v,c)(x)$ and $H(d',u')(x'') = H(v,c')(x')$ is $K$-covering: this is exactly equivalent to our condition (2);
        \item for a pair or arrows $ u,u': c' \rightarrow c$ in $\C$, asking the family $ \{ \langle \widehat{H}(w) \rangle : \widehat{H}(c'') \rightarrow \eq(\widehat{H}(u), \widehat{H}(u')) \mid w : c'' \rightarrow c' \textup{ such that } uw=u'w  \}$ to be locally epimorphic amounts to asking that for any $x \in \eq(\widehat{H}(u), \widehat{H}(u'))(d)$, that is, for any $x \in H(d,c')$ such that $ H(d,u)(x) = H(d,u')(x)$ in $H(d,c)$, the set of all $v : d' \rightarrow d$ such that there is some $ w : c'' \rightarrow c'$ with $uw=u'w$ together with $ x' \in H(d', c'')$ with $ H(v,c')(x) = H(d',w)(x')$ is $K$-covering: this is exactly equivalent to our condition (3).
    \end{itemize}
    By \cite{maclane&moerdijk}[VII.9 theorem 1] the functor $ \mathfrak{a}_K\widehat{H}$ is filtering if and only if it is flat, and this amounts for the induced $ \mathfrak{a}_K \lext_H$ to being lex. 
\end{proof}

\subsection{Distributors of sites and continuity conditions}

Now, as in the functorial case, cover-distribution and cover-flatness can be mixed altogether do define an appropriate notion of distributors between sites:

\begin{definition}
    Let $(\C,J)$, $(\D,K)$ be sites; a distributor $ H : \C \rightarrow \D$ will be said to be a \emph{distributor of sites}, denoted as $ H : (\C,J) \looparrowright (\D,K)$, if it is cover-distributing from $J$ to $K$ and $K$-flat. 
\end{definition}

\begin{proposition}\label{distributor of sites induce geomorph}
    Let $ H : (\C,J) \looparrowright (\D,K)$ be a distributor of sites; then $ \mathfrak{a}_K\lext_H$ factorizes through $ \mathfrak{a}_J$ and defines the inverse image part of a geometric morphism $\Sh(H) : \widehat{\D}_K \rightarrow \widehat{\C}_J$ as depicted below
\[\begin{tikzcd}
	{\widehat{\mathcal{C}}} & {\widehat{\mathcal{D}}} \\
	{\widehat{\mathcal{C}}_J} & {\widehat{\mathcal{D}}_K}
	\arrow["{\lext_H}", from=1-1, to=1-2]
	\arrow["{\mathfrak{a}_J}"', from=1-1, to=2-1]
	\arrow["{\mathfrak{a}_K}", from=1-2, to=2-2]
	\arrow["{\Sh(H)^*}"', dashed, from=2-1, to=2-2]
\end{tikzcd}\]

\end{proposition}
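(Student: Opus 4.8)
The plan is to reduce the statement to the standard correspondence between flat continuous functors into a topos and geometric morphisms into the associated sheaf topos: the point is that the composite $\mathfrak{a}_K\widehat{H}:\C\to\widehat{\D}_K$ is flat and sends $J$-covers to epimorphic families, so it behaves like a morphism of sites into the Grothendieck topos $\widehat{\D}_K$, and one only has to check that the induced geometric morphism has inverse image $\mathfrak{a}_K\lext_H$ after precomposition with $\mathfrak{a}_J$.

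First I would record the inputs. Since $H$ is cover-distributing, \cref{cover dist implies send cover to epifam} gives that $\mathfrak{a}_K\widehat{H}$ sends $J$-covers to epimorphic families in $\widehat{\D}_K$; since $H$ is $K$-flat, \cref{K-flat distributor equivalences} gives that $\mathfrak{a}_K\widehat{H}$ is flat and, equivalently, that $\mathfrak{a}_K\lext_H\mathfrak{i}_J:\widehat{\C}_J\to\widehat{\D}_K$ is lex. I would also note at the outset that $\lext_H$ is cocontinuous (left adjoint to $\rest_H$) and $\mathfrak{a}_K$ is cocontinuous (left adjoint to $\mathfrak{i}_K$), so $\mathfrak{a}_K\lext_H:\widehat{\C}\to\widehat{\D}_K$ is cocontinuous; and that, $\lext_H$ being a left Kan extension along the fully faithful Yoneda embedding, one has $\mathfrak{a}_K\lext_H\hirayo_\C\simeq\mathfrak{a}_K\widehat{H}$.

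Next comes the factorization. A cocontinuous functor out of $\widehat{\C}$ into a cocomplete category factors (uniquely up to isomorphism) through $\mathfrak{a}_J$ as soon as it inverts all $J$-local isomorphisms, equivalently all $J$-covering sieve inclusions $m_S:S\rightarrowtail\hirayo_c$; and the classifying-topos machinery (Diaconescu's theorem, see \cite{maclane&moerdijk}[VII.7--9] or \cite{elephant}) guarantees exactly this for the cocontinuous extension of a flat functor sending covers to epimorphic families. Applied to $\mathfrak{a}_K\lext_H$, whose restriction along $\hirayo_\C$ is the flat, cover-to-epi functor $\mathfrak{a}_K\widehat{H}$, this yields that $\mathfrak{a}_K\lext_H$ inverts all $J$-local isomorphisms, hence — by the universal property of $\mathfrak{a}_J$ as the localization of $\widehat{\C}$ at the $J$-local isomorphisms — factors as $\Sh(H)^*\circ\mathfrak{a}_J$ for a functor $\Sh(H)^*:\widehat{\C}_J\to\widehat{\D}_K$, which is again cocontinuous since colimits in $\widehat{\C}_J$ are sheafifications of colimits in $\widehat{\C}$.

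Finally, to see $\Sh(H)^*$ is an inverse image part: precomposing the factorization with $\mathfrak{i}_J$ and using $\mathfrak{a}_J\mathfrak{i}_J\simeq\id$ gives $\Sh(H)^*\simeq\mathfrak{a}_K\lext_H\mathfrak{i}_J$, which is lex by \cref{K-flat distributor equivalences}; being moreover cocontinuous between Grothendieck topoi it admits a right adjoint $\Sh(H)_*$ by the adjoint functor theorem, so $(\Sh(H)^*,\Sh(H)_*)$ is a geometric morphism $\Sh(H):\widehat{\D}_K\to\widehat{\C}_J$ with $\Sh(H)^*\circ\mathfrak{a}_J\simeq\mathfrak{a}_K\lext_H$, as claimed. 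I expect the one delicate point to be the middle step — checking that $\mathfrak{a}_K\lext_H$ inverts covering sieve inclusions; the conceptual route is the one above, but a hands-on alternative is to pass to the image factorization $\lext_H(S)\twoheadrightarrow H[S]\rightarrowtail\widehat{H}(c)$ of $\lext_H(m_S)$, note that \cref{cover dist implies send cover to epifam} makes $\mathfrak{a}_K$ invert the monomorphism $H[S]\rightarrowtail\widehat{H}(c)$, and check that clauses (2) and (3) of $K$-flatness (\cref{cover-flat distributor}) force two parametrizations of an element of $H[S](d)$ to agree $K$-locally on $d$, so that $\mathfrak{a}_K$ also inverts the epimorphism $\lext_H(S)\twoheadrightarrow H[S]$.
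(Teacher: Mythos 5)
Your proposal is correct and follows essentially the same route as the paper's proof: both use that $K$-flatness and cover-distributivity make $\mathfrak{a}_K\widehat{H}$ a flat, cover-preserving ($J$-continuous) functor into $\widehat{\D}_K$, identify $\mathfrak{a}_K\lext_H$ with the left Kan extension of $\mathfrak{a}_K\widehat{H}$ along Yoneda, and invoke Diaconescu's theorem to get both the factorization through $\mathfrak{a}_J$ and the lexness of the resulting inverse image. Your write-up simply spells out more explicitly the localization argument and the existence of the right adjoint, which the paper leaves implicit.
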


\begin{proof}
    By $K$-flatness, $ \mathfrak{a}_K\widehat{H}$ is a flat functor $ \C \rightarrow \widehat{\D}_K$: hence its extension $ \lan_{\hirayo_{\C}} \mathfrak{a}_K\widehat{H}$ is lex, but as a left adjoint, $\mathfrak{a}_K$ commutes with left Kan extensions, so $\lan_{\hirayo_{\C}} \mathfrak{a}_K\widehat{H} \simeq \mathfrak{a}_K \lext_H$. But since $H$ is $K$-flat and cover-distributing from $J$ to $K$, then $ \mathfrak{a}_K\widehat{H} : \C \rightarrow \widehat{\D}_K$ is a $J$-flat functor, hence by Diaconescu, its extension $ \lan_{\hirayo_{\C}} \mathfrak{a}_K\widehat{H} $ factorizes through the sheafification $\mathfrak{a}_J$, in the sense that $ \mathfrak{a}_K \widehat{H}$ inverts the units of the adjunction $ \mathfrak{a}_J \dashv \mathfrak{i}_J$. 
\end{proof}

\begin{lemma}
    Let $H :  (\C,J) \looparrowright (\D,K)$ and $ G : (\D,K) \looparrowright (\B,L)$ be distributors of sites; then the composite $ G \otimes H $ defines a distributor of sites $ (\C,J) \looparrowright (\D,K)$. Moreover for any site $ (\C,J)$ the unit $ \hom_\C$ is a distributor of sites $(\C,J) \looparrowright (\C,J)$. 
\end{lemma}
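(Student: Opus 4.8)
The plan is to verify directly the two clauses in the definition of a distributor of sites — cover-distributivity and flatness — separately, for both the composite $G\otimes H$ and the unit $\hom_\C$, reducing each clause to a result already established. Cover-distributivity is free: \cref{composition of cover dist distributors} already asserts that $G\otimes H$ is $(J,L)$-cover-distributing whenever $H$ is $(J,K)$-cover-distributing and $G$ is $(K,L)$-cover-distributing, and that $\hom_\C$ is $(J,J)$-cover-distributing. So the only substantive point is flatness, and I would attack it through the characterization of \cref{K-flat distributor equivalences}, namely that a distributor $F:\mathcal{X}\looparrowright(\mathcal{Y},M)$ is $M$-flat exactly when the induced functor $\mathfrak{a}_M\widehat{F}:\mathcal{X}\to\widehat{\mathcal{Y}}_M$ is flat (equivalently, $\mathfrak{a}_M\lext_F\mathfrak{i}$ is left exact).

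For the unit, I would note that the classifying functor of $\hom_\C$ is the Yoneda embedding, so $\lext_{\hom_\C}\simeq\id_{\widehat{\C}}$; hence $\mathfrak{a}_J\lext_{\hom_\C}\mathfrak{i}_J\simeq\mathfrak{a}_J\mathfrak{i}_J\simeq\id_{\widehat{\C}_J}$, which is left exact, so $\hom_\C$ is $J$-flat by \cref{K-flat distributor equivalences}. (Alternatively one checks by hand that the three sieves in \cref{cover-flat distributor} are all maximal for $H=\hom_\C$, taking the relevant comparison arrow to be an identity.) Combined with cover-distributivity this gives that $\hom_\C$ is a distributor of sites $(\C,J)\looparrowright(\C,J)$.

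For the composite, the key inputs are the pseudofunctoriality of the classifying construction, $\widehat{G\otimes H}\simeq\lext_G\circ\widehat{H}$, and the factorization of \cref{distributor of sites induce geomorph} applied to $G$: since $G$ is a distributor of sites there is a left exact and cocontinuous functor $\Sh(G)^*:\widehat{\D}_K\to\widehat{\B}_L$ with a natural isomorphism $\mathfrak{a}_L\lext_G\simeq\Sh(G)^*\mathfrak{a}_K$. Whiskering this with $\widehat{H}$ yields $\mathfrak{a}_L\widehat{G\otimes H}\simeq\Sh(G)^*\circ(\mathfrak{a}_K\widehat{H})$. Now $\mathfrak{a}_K\widehat{H}:\C\to\widehat{\D}_K$ is flat because $H$ is $K$-flat (\cref{K-flat distributor equivalences} again), and postcomposing a flat topos-valued functor with the inverse-image part of a geometric morphism again gives a flat functor: since $\Sh(G)^*$ is cocontinuous it commutes with the left Kan extension along Yoneda, so $\lan_{\hirayo_\C}\bigl(\Sh(G)^*\mathfrak{a}_K\widehat{H}\bigr)\simeq\Sh(G)^*\circ\lan_{\hirayo_\C}(\mathfrak{a}_K\widehat{H})$ is a composite of left exact functors. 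Hence $\mathfrak{a}_L\widehat{G\otimes H}$ is flat, i.e. $G\otimes H$ is $L$-flat, and being also $(J,L)$-cover-distributing it is a distributor of sites $(\C,J)\looparrowright(\B,L)$ (correcting the evident typo in the statement, which writes $(\C,J)\looparrowright(\D,K)$).

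The step requiring genuine care is the last one — the stability of flatness of a topos-valued functor under postcomposition with a geometric inverse image — which hinges on the interplay between the universal property of the left Kan extension along Yoneda and the cocontinuity of $\Sh(G)^*$; this is standard but should be spelled out. Everything else is bookkeeping with \cref{composition of cover dist distributors}, \cref{K-flat distributor equivalences} and \cref{distributor of sites induce geomorph}.
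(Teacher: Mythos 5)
Your proposal is correct and follows essentially the same route as the paper: cover-distributivity is quoted from \cref{composition of cover dist distributors}, and flatness of the composite is obtained by writing $\mathfrak{a}_L\widehat{G\otimes H}$ as the lex cocontinuous functor $\Sh(G)^*=\mathfrak{a}_L\lext_G\mathfrak{i}_K$ postcomposed with the flat functor $\mathfrak{a}_K\widehat{H}$, which is exactly the paper's decomposition $\mathfrak{a}_L\lext_G\widehat{H}\simeq\mathfrak{a}_L\lext_G\mathfrak{i}_K\mathfrak{a}_K\widehat{H}$, while the unit is handled identically via $\mathfrak{a}_J\hirayo_\C$ inducing the identity of $\widehat{\C}_J$. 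Your reading of $(\C,J)\looparrowright(\D,K)$ as a typo for $(\C,J)\looparrowright(\B,L)$ is also right.
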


\begin{proof}
    We saw at \cref{composition of cover dist distributors} that cover-distributing distributors compose and that the unit is cover-distributing. Now suppose that $H$, $G$ are distributor of sites; recall that 
    \[  \widehat{G \otimes H} \simeq \lext_G \widehat{H} \]
    But also recall that $\mathfrak{a}_L\lext_G$ factorizes through $ \mathfrak{a}_K$ for $G$ is a distributor of sites, hence $ \mathfrak{a}_L \lext_G $ inverts the units of the adjunction $ \mathfrak{a}_K \dashv \mathfrak{i}_K$ and we have hence 
    \[ \mathfrak{a}_L \lext_G \widehat{H} \simeq  
 \mathfrak{a}_L \lext_G \mathfrak{i}_K \mathfrak{a}_K \widehat{H}  \]
    but on one hand, $\mathfrak{a}_L \lext_G \mathfrak{i}_K $ is a lex functor since $G$ is $L$-flat, and on the other hand, $ \mathfrak{a}_K \widehat{H}$ is flat since $H$ is $K$-flat, so this composite is flat, and from \cref{K-flat distributor equivalences}, the composite $ G \otimes H$ must be $L$-flat; being also cover-distributing, it is hence a distributor of sites. Regarding the unit $\hom_{\C}$, we have seen it is cover-distributing; but its classifier is $ \hirayo_{\C} : \C \hookrightarrow \widehat{\C}$, and the composite $ \mathfrak{a}_J \hirayo_{\C}$ is always flat as it induces the identity functor of $ \widehat{\C}_J$. 
\end{proof}

Being closed under composition and unit, we can consider the bicategory $ \Site^{\mustrill}$ sites, distributors of sites and transformations.

\begin{theorem}\label{functoriality for distributor of sites}
    The construction above defines a pseudofunctor 
\[\begin{tikzcd}
	{(\Site^{\mustrill})^{\op}} & \Top
	\arrow["\Sh", from=1-1, to=1-2]
\end{tikzcd}\]
\end{theorem}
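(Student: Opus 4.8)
The plan is to extract all the data of the pseudofunctor from \cref{distributor of sites induce geomorph} and to organise the coherence verifications around a single \emph{cancellation principle} for the sheafification reflectors. First I would fix, once and for all, for every distributor of sites $H:(\C,J)\looparrowright(\D,K)$, a cocontinuous functor $\Sh(H)^{*}:\widehat{\C}_{J}\to\widehat{\D}_{K}$ together with a chosen isomorphism $\Sh(H)^{*}\circ\mathfrak{a}_{J}\simeq\mathfrak{a}_{K}\circ\lext_{H}$ --- such a choice exists by \cref{distributor of sites induce geomorph} --- and declare $\Sh(H)$ to be the resulting geometric morphism $\widehat{\D}_{K}\to\widehat{\C}_{J}$. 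The crucial observation is that, $\mathfrak{a}_{J}$ being a reflector (so $\mathfrak{a}_{J}\mathfrak{i}_{J}\simeq\id$), whiskering by $\mathfrak{a}_{J}$ identifies the category of cocontinuous functors $\widehat{\C}_{J}\to\mathcal{E}$ with the category of cocontinuous functors $\widehat{\C}\to\mathcal{E}$ inverting the units of $\mathfrak{a}_{J}\dashv\mathfrak{i}_{J}$; hence any natural isomorphism, and any equation between $2$-cells, which is witnessed after whiskering with the relevant $\mathfrak{a}$'s descends uniquely to the sheaf topoi. This is precisely what will let me reduce every coherence check to a fact already recorded about $\lext$.

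Next I would define the action on $2$-cells. A transformation of distributors $\phi:H\Rightarrow G$ between $(\C,J)\looparrowright(\D,K)$ induces $\widehat{\phi}:\widehat{H}\Rightarrow\widehat{G}$, hence, by functoriality of $\lan_{\hirayo_{\C}}(-)$, a $2$-cell $\lext_{\phi}:\lext_{H}\Rightarrow\lext_{G}$; whiskering with $\mathfrak{a}_{K}$ and conjugating by the fixed isomorphisms gives a $2$-cell $\Sh(H)^{*}\mathfrak{a}_{J}\Rightarrow\Sh(G)^{*}\mathfrak{a}_{J}$, which the cancellation principle transposes to a unique $\Sh(\phi)^{*}:\Sh(H)^{*}\Rightarrow\Sh(G)^{*}$; I take $\Sh(\phi)$ to be this, viewed as a $2$-cell $\Sh(H)\Rightarrow\Sh(G)$ in $\Top$ (recall that $(-)^{\op}$ reverses only $1$-cells, and that $\Sh$ preserves the direction of $2$-cells). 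That $\Sh$ then preserves identity $2$-cells and vertical composites is immediate, since $\widehat{(-)}$, $\lan_{\hirayo_{\C}}(-)$ and whiskering by $\mathfrak{a}_{K}$ are all functorial and the transposes are unique.

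Then I would build the pseudofunctoriality constraints. Given composable $H:(\C,J)\looparrowright(\D,K)$ and $G:(\D,K)\looparrowright(\B,L)$ --- whose composite $G\otimes H$ is again a distributor of sites by the composition lemma preceding the theorem --- one has $\widehat{G\otimes H}\simeq\lext_{G}\circ\widehat{H}$, and since $\lext_{G}$ is cocontinuous and $\lext_{(-)}=\lan_{\hirayo_{\C}}\widehat{(-)}$, a coherent isomorphism $\lext_{G\otimes H}\simeq\lext_{G}\circ\lext_{H}$ follows. Whiskering with $\mathfrak{a}_{L}$ and applying in turn $\mathfrak{a}_{L}\lext_{G}\simeq\Sh(G)^{*}\mathfrak{a}_{K}$ and $\mathfrak{a}_{K}\lext_{H}\simeq\Sh(H)^{*}\mathfrak{a}_{J}$ yields $\mathfrak{a}_{L}\lext_{G\otimes H}\simeq\Sh(G)^{*}\Sh(H)^{*}\mathfrak{a}_{J}$; comparing with $\mathfrak{a}_{L}\lext_{G\otimes H}\simeq\Sh(G\otimes H)^{*}\mathfrak{a}_{J}$ and cancelling $\mathfrak{a}_{J}$ gives the invertible compositor $\Sh(G\otimes H)^{*}\simeq\Sh(G)^{*}\circ\Sh(H)^{*}$. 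The unitor arises the same way from $\widehat{\hom_{\C}}=\hirayo_{\C}$, so that $\lext_{\hom_{\C}}\simeq\id_{\widehat{\C}}$, whence $\mathfrak{a}_{J}\lext_{\hom_{\C}}\simeq\mathfrak{a}_{J}=\id_{\widehat{\C}_{J}}\mathfrak{a}_{J}$ and $\Sh(\hom_{\C})^{*}\simeq\id$.

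Finally I would check the pseudofunctor axioms --- the associativity pentagon for a triple composite, the two unit triangles, and the naturality of compositor and unitor in $2$-cells. Each of these is an equation between $2$-cells of the form $(\cdots)\circ\mathfrak{a}_{J}$, so by the cancellation principle it suffices to verify the corresponding equation \emph{before} cancelling $\mathfrak{a}_{J}$, where it is nothing but the already-established coherence of the pseudofunctor $\lext:\Dist\to\coCont$ (together with the pseudofunctoriality of $\widehat{(-)}$ and the triangle identities of $\mathfrak{a}_{J}\dashv\mathfrak{i}_{J}$). The one genuinely non-formal ingredient is the one already isolated: that every composite of $\lext$'s occurring above really does invert the $J$-local isomorphisms, so that the factorizations $\Sh(-)^{*}$ exist at all --- and this is exactly where $K$-flatness and cover-distributivity enter, through \cref{distributor of sites induce geomorph}. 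I expect the main difficulty to be bookkeeping: keeping the variance straight and verifying that the chosen factorizations can be assembled coherently; once the cancellation principle is available, the rest is routine.
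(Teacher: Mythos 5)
Your proposal is correct and follows essentially the same route as the paper: the compositor comes from $\lext_{G\otimes H}\simeq\lext_G\lext_H$ together with the fact that $\mathfrak{a}_L\lext_G$ inverts the $K$-local isomorphisms, the unitor from $\widehat{\hom_\C}=\hirayo_\C$, and the action on $2$-cells from $\widehat{\phi}$ and $\lext_\phi$ followed by sheafification. The only difference is presentational: you descend along $\mathfrak{a}_J$ via the $2$-universal property of the localization (your ``cancellation principle''), whereas the paper works with the explicit formula $\Sh(H)^*=\mathfrak{a}_K\lext_H\mathfrak{i}_J$ and inserts $\mathfrak{i}_K\mathfrak{a}_K$ in the middle of the composite; these are equivalent, and your version makes the coherence checks slightly more transparent.
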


\begin{proof}
    We have to show this construction is compatible with composition of distributors of sites and extends to transformations between them. Taking a pair of distributors of sites $ H,G$ as in the previous lemma, we have that \begin{align*}
        \Sh(G \otimes H)^* &\simeq \mathfrak{a}_L \lext_{G \otimes H} \mathfrak{i}_J \\ 
        &\simeq \mathfrak{a}_L \lext_G \lext_H \mathfrak{i}_J \\
        &\simeq \mathfrak{a}_L \lext_G \mathfrak{i}_K \mathfrak{a}_K \lext_H \mathfrak{i}_J \\
        &\simeq \Sh(G)^*\Sh(H)^*
    \end{align*} 
    where the third isomorphism comes from the fact that $ \mathfrak{a}_L\lext_G$ inverts $K$-bidense morphisms. Similarly we saw that
    \begin{align*}
        \Sh(\hom_\C)^* &\simeq \mathfrak{a}_J \lan_{\hirayo_\C} \hirayo_\C \\
        &\simeq 1_{\widehat{\C}_J}
    \end{align*}
    Regarding transformations of distributors, for $ \phi : H \Rightarrow G$, the functoriality of the equivalence $ \Dist[\C,\D] \simeq [\C, \widehat{\D}_K]$ induces a natural transformation $ \widehat{\phi } : \widehat{H} \Rightarrow \widehat{G} $, which in turn induces in a covariant way a transformation $ \phi^\flat : \lext_H \Rightarrow \lext_G  $ from which we deduce by sheafification the inverse image part of a geometric transformation $ \Sh(\phi)$. Hence pseudofunctoriality of $\Sh$. 
\end{proof}

In \cite{johnstonewraith}[Chapter III] is also considered a property for distributors that is sufficient to induce geometric morphisms, which is in fact almost tautological, in terms of {bidense morphisms}. Recall that a Grothendieck topology $J$ induces a lex-localization $ \mathfrak{a}_J : \widehat{\mathcal{C}}^{\op} \rightarrow \Sh(\mathcal{C}, J)$.

\begin{definition}
    A \emph{dense monomorphism} in $[\widehat{\mathcal{C}}^{\op}, \Set]$ is a monomorphism $m : M \rightarrowtail E$ whose image $ \mathfrak{a}_J(m) $ is an isomorphism; more generally a \emph{dense morphism} is a morphism $u: E' \rightarrow E$ whose image part $ m_u $ is a dense monomorphism; finally a \emph{bidense morphism} is a dense morphism whose diagonal $ \Delta_u : E' \rightarrowtail E'\times_E^{u,u} E'$ is a dense monomorphism. 
\end{definition}

\begin{proposition}
For a Grothendieck topology $ J$ on $\mathcal{C}$, the sheaf topos $ \widehat{\mathcal{C}}_J$ is exhibited as the localization of $ \widehat{\mathcal{C}}$ at $J$-bidense morphisms. A geometric morphism $ f : \widehat{\mathcal{C}} \rightarrow \widehat{\mathcal{D}}$ restricts to a geometric morphism $ \widehat{\mathcal{C}}_J \rightarrow \widehat{\mathcal{D}}_K$ if and only if it sends $J$-dense monomorphisms to $K$-denses monomorphisms (equivalently, if it sends $J$-bidense morphisms to $K$-bidense morphisms).
\end{proposition}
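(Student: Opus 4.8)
The statement splits into two parts, and the first is the substantive one. To see that $\widehat{\mathcal{C}}_J$ is the localization of $\widehat{\mathcal{C}}$ at the $J$-bidense morphisms, I would start from the fact that $\mathfrak{a}_J : \widehat{\mathcal{C}} \to \widehat{\mathcal{C}}_J$ is a reflective and left-exact localization, so that by the Gabriel--Zisman description of reflective localizations $(\widehat{\mathcal{C}}_J, \mathfrak{a}_J)$ \emph{is} the category of fractions $\widehat{\mathcal{C}}[\Sigma_J^{-1}]$, where $\Sigma_J$ is the class of morphisms inverted by $\mathfrak{a}_J$; it then remains only to identify $\Sigma_J$ with the $J$-bidense morphisms. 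Given $u : E' \to E$ in $\widehat{\mathcal{C}}$, factor it through its image as an epimorphism $p : E' \to M_u$ followed by a monomorphism $m_u : M_u \to E$. Being a left adjoint, $\mathfrak{a}_J$ preserves epimorphisms, and being left exact it preserves monomorphisms, hence it preserves this factorization, so $\mathfrak{a}_J(u)$ is invertible iff both $\mathfrak{a}_J(m_u)$ and $\mathfrak{a}_J(p)$ are. The former is exactly the assertion that $m_u$ is $J$-dense. For the latter I would use that in a topos an epimorphism is invertible iff it is a monomorphism iff its diagonal is invertible, together with left-exactness of $\mathfrak{a}_J$, which carries the diagonal of $p$ to the diagonal of $\mathfrak{a}_J(p)$; since $m_u$ is monic the diagonal of $p$ coincides with the diagonal $\Delta_u$ of $u$ (both have kernel pair $E' \times_E^{u,u} E'$), so $\mathfrak{a}_J(p)$ is invertible iff $\Delta_u$ is $J$-dense. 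Conjoining the two conditions identifies $\Sigma_J$ with the $J$-bidense morphisms.

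For the second part, let $f$ denote equally the geometric morphism and its inverse-image functor $\widehat{\mathcal{C}} \to \widehat{\mathcal{D}}$ (which is how the condition in the statement is to be read), with direct-image partner $f_* : \widehat{\mathcal{D}} \to \widehat{\mathcal{C}}$, $f \dashv f_*$. I would run the same picture on both sites and use that a commuting square of geometric morphisms is the same as a commuting square of their inverse-image functors, the two legs coming from the subtopos inclusions having the reflectors $\mathfrak{a}_J, \mathfrak{a}_K$ as inverse images. Thus $f$ restricts to a geometric morphism between the sheaf subtoposes precisely when $\mathfrak{a}_K \circ f : \widehat{\mathcal{C}} \to \widehat{\mathcal{D}}_K$ factors (uniquely up to isomorphism) through $\mathfrak{a}_J : \widehat{\mathcal{C}} \to \widehat{\mathcal{C}}_J$; for then the factorizing functor is naturally isomorphic to $\mathfrak{a}_K \circ f \circ \mathfrak{i}_J$, hence left exact, and its direct-image partner carries $K$-sheaves to $J$-sheaves --- a $K$-sheaf $Y$ satisfies $f_*Y \perp m$ for every $J$-dense monomorphism $m$, since $Y \perp f(m)$ by adjunction --- so the factorizing functor has a right adjoint and genuinely underlies a geometric morphism $\widehat{\mathcal{C}}_J \to \widehat{\mathcal{D}}_K$. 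By the first part $\widehat{\mathcal{C}}_J = \widehat{\mathcal{C}}[\Sigma_J^{-1}]$, so the universal property of categories of fractions gives that this factorization exists iff $\mathfrak{a}_K \circ f$ inverts every $J$-bidense morphism, iff --- again by the first part, now for $K$ --- $f$ sends $J$-bidense morphisms to $K$-bidense morphisms. Finally, since $f$ is left exact it preserves monomorphisms, pullbacks (hence diagonals) and epi--mono factorizations, so for every $u$ the morphism $f(u)$ has image monomorphism $f(m_u)$ and diagonal $f(\Delta_u)$; hence $f(u)$ is $K$-bidense exactly when $f$ carries the $J$-dense monomorphisms $m_u$ and $\Delta_u$ to $K$-dense monomorphisms, and ranging over all $u$ this yields the equivalence of the bidense formulation with the dense-monomorphism one.

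The only point I expect to need care is the epi half of the first part: that $\mathfrak{a}_J$ inverts an epimorphism exactly when the corresponding diagonal is $J$-dense. This is where left-exactness of sheafification is genuinely used, via the characterization of isomorphisms among epimorphisms by their kernel pairs; everything else --- Gabriel--Zisman for reflective localizations, the universal property of categories of fractions, and the transfer of the $J$-dense/$J$-bidense conditions along the left-exact $f$ --- is routine.
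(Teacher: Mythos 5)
The paper states this proposition without proof (it is recalled from the Johnstone--Wraith reference, with only the definition of dense and bidense morphisms preceding it), so there is no in-paper argument to compare against; your proof supplies the standard one and is correct. The two key reductions are handled properly: identifying the class $\Sigma_J$ inverted by $\mathfrak{a}_J$ with the bidense morphisms via the epi--mono factorization (using that $\widehat{\mathcal{C}}_J$ is balanced and that the diagonal of the epi part coincides with $\Delta_u$ because $m_u$ is monic), and noting that dense monomorphisms are themselves bidense, so that preservation of bidense morphisms and preservation of dense monomorphisms are equivalent for a functor preserving monos, diagonals and image factorizations. Your reading of the statement with $f$ standing for the left-exact left adjoint $\widehat{\mathcal{C}}\rightarrow\widehat{\mathcal{D}}$ is the right one given the direction of the dense-mono condition. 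One cosmetic slip: in the second part you credit preservation of epi--mono factorizations to left-exactness alone, whereas the epi half comes from $f$ being a left adjoint (as you correctly said in the first part); this does not affect the validity of the argument.
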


\begin{definition}
    A distributor $ \mathcal{C} \looparrowright \mathcal{D}$ is \emph{Johnstone-Wraith-continuous} if, for any finite diagram $ (E_i)_{i \in I} : I \rightarrow \widehat{\mathcal{C}}$ and any $J$-bidense morphism $ u : E \rightarrow \lim_{i \in I} E_i$, the induced map $\lext_H(u) : \lext_H(E) \rightarrow \lext_H(\lim_{i \in I}E_i)$ is $K$-bidense in $\widehat{\mathcal{D}}$. 
\end{definition}

\begin{remark}
    Observe that in particular, for this applies to the case of constant diagrams $ E$ (which are finite), this condition implies that $\lext_H$ sends $J$-bidense morphisms to $K$-bidense morphisms.
\end{remark}

For an ordinary coverage $ J$ on $\C$ which is not required to satisfy the transitivity axiom, dense monomorphisms $ m : S \rightarrowtail \hirayo_c$ corresponds to sieves that are covering for the Grothendieck topology generated from $J$ (by closing under the transitivity). More generally, a dense monomorphism into an arbitrary object is a monomorphism whose pullback along representables are dense, see \cite{borceux1994handbook3}[proposition 3.5.2].

\begin{proposition}
    A distributor $ H : \C \looparrowright \D$ defines a distributor of sites $(\C,J) \looparrowright (\D, \overline{K})$ (for the transitive closure $\overline{K}$ of $K$) if and only if it is Johnstone-Wraith continuous for $J,K$. 
\end{proposition}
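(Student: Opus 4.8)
For the overall strategy, I would prove the two implications separately. The forward one follows almost formally from the geometric morphism produced in \cref{distributor of sites induce geomorph}; for the converse one must extract both cover-distributivity and $\overline K$-flatness from Johnstone--Wraith continuity, and the second of these carries the real difficulty. I use throughout that $\widehat{\mathcal D}_K=\widehat{\mathcal D}_{\overline K}$, so that being $K$-bidense, being $\overline K$-bidense, and being inverted by $\mathfrak a_{\overline K}$ all coincide, and that $\overline K$-dense subobjects of a representable are exactly the $\overline K$-covering sieves.

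For the forward implication, suppose $H$ is a distributor of sites $(\mathcal C,J)\looparrowright(\mathcal D,\overline K)$. By \cref{distributor of sites induce geomorph} the functor $\mathfrak a_{\overline K}\lext_H$ factors as $\Sh(H)^*\circ\mathfrak a_J$ with $\Sh(H)^*\colon\widehat{\mathcal C}_J\to\widehat{\mathcal D}_{\overline K}$ left-exact. Hence for any $J$-bidense $u\colon E\to\lim_i E_i$ (with $(E_i)_{i\in I}$ any diagram, finite or not) the morphism $\mathfrak a_J(u)$ is invertible, so $\mathfrak a_{\overline K}(\lext_H(u))=\Sh(H)^*(\mathfrak a_J(u))$ is invertible, that is, $\lext_H(u)$ is $\overline K$-bidense. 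In particular $H$ is Johnstone--Wraith continuous; the finiteness hypothesis plays no role in this direction.

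For the converse, assume $H$ is Johnstone--Wraith continuous. I first get cover-distributivity. For a $J$-covering sieve $S$ on an object $c$, the inclusion $m_S\colon S\rightarrowtail\hirayo_c$ is a monomorphism inverted by $\mathfrak a_J$, hence $J$-bidense, with target the one-object finite limit $\hirayo_c$; applying the hypothesis, $\lext_H(m_S)$ is $\overline K$-bidense, so the monic part $H[S]\rightarrowtail\widehat H(c)$ of its image factorization is $\overline K$-dense. For $x\in H(d,c)$, viewed as $x\colon\hirayo_d\to\widehat H(c)$, the pullback $x^*H[S]\rightarrowtail\hirayo_d$ stays $\overline K$-dense, hence is a $\overline K$-covering sieve; this is precisely \cref{cover-distribution} for $H$ from $J$ to $\overline K$.

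There remains $\overline K$-flatness, which I expect to be the main obstacle. Since instantiating Johnstone--Wraith continuity at one-object diagrams already forces $\mathfrak a_{\overline K}\lext_H$ to invert every $J$-bidense morphism, it descends to a cocontinuous $\Phi\colon\widehat{\mathcal C}_J\to\widehat{\mathcal D}_{\overline K}$ with $\Phi\mathfrak a_J=\mathfrak a_{\overline K}\lext_H$ and $\Phi\simeq\mathfrak a_{\overline K}\lext_H\mathfrak i_J$, so by \cref{K-flat distributor equivalences} it suffices to prove $\Phi$ left-exact. Given a finite diagram $(F_i)_{i\in I}$ in $\widehat{\mathcal C}_J$, I would choose presheaves $P_i$ with $\mathfrak a_JP_i\simeq F_i$ compatibly with the transition maps, so that $\lim_i\mathfrak i_JF_i$ is a finite limit of sheaves in $\widehat{\mathcal C}$ and, $\mathfrak a_J$ being left-exact, the comparison $u\colon\lim_iP_i\to\lim_i\mathfrak i_JF_i$ is $J$-bidense; Johnstone--Wraith continuity then renders $\lext_H(u)$ $\overline K$-bidense, and a diagram chase using moreover the cocontinuity of $\lext_H$ and the left-exactness of $\mathfrak a_{\overline K}$ should identify $\Phi(\lim_iF_i)$ with $\lim_i\Phi(F_i)$. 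The crux is that the hypothesis only bears on $\lext_H$ of $J$-bidense morphisms \emph{into} a finite limit, whereas left-exactness of $\mathfrak a_{\overline K}\lext_H$ is naturally a property of the comparison maps \emph{out of} $\lext_H(\lim_i E_i)$; so the presentation of the sheaf-level limits by presheaves has to be arranged precisely so that exactly the needed comparison falls within the scope of Johnstone--Wraith continuity, after which the chase across $\mathfrak a_J$ and $\mathfrak a_{\overline K}$ can be completed. Once $\Phi$ is left-exact, $H$ is $\overline K$-flat, and together with cover-distributivity this makes $H$ a distributor of sites $(\mathcal C,J)\looparrowright(\mathcal D,\overline K)$.
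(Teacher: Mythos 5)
Your proposal is correct and takes essentially the same approach as the paper's proof: the forward direction via the factorization of $\mathfrak{a}_K\lext_H$ through $\mathfrak{a}_J$, and the converse by specializing Johnstone--Wraith continuity first to covering sieves (giving cover-distributivity after pulling back the dense image $H[S]\rightarrowtail \widehat{H}(c)$ along heteromorphisms) and then to bidense morphisms into finite limits (giving flatness). The paper's own treatment of the flatness step is in fact terser than yours --- it simply asserts that applying the hypothesis to bidense morphisms into finite limits exhibits $\mathfrak{a}_K\lext_H$ as lex --- so your more explicit discussion of presenting sheaf-level finite limits by presheaf-level ones elaborates, rather than diverges from, the paper's argument.
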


\begin{proof}
    Suppose $H$ to define a distributor of sites $(\C,J) \looparrowright (\D, \overline{K})$; as $K$ and $\overline{K} $ induce the same sheaf topos, we have $ \mathfrak{a}_K \simeq \mathfrak{a}_{\overline{K}}$, then $\mathfrak{a}_K\lext_H$ factorizes through $ \mathfrak{a}_J$, hence localizes $J$-bidenses morphisms, which exactly means that it sends $J$-bidense morphisms to $\overline{K}$-bidense morphisms, which are the same as the $K$-bidense morphisms. 

    Conversely suppose that $H$ is Johnstone-Wraith continuous for $J,K$; let be $ S \rightarrow \hirayo_c$ a $J$-covering sieve: this is a $J$-bidense morphism, hence is sent by $\lext_H$ to a $K$-bidense morphism $ \lext_H(S) \rightarrow \widehat{H}(c)$, so that its image factorization $ H[S] \rightarrowtail \widehat{H}(c)$ is a dense monomorphism, and for any $x \in H(d,c)$, $x^*H[S] \rightarrow \hirayo_d$ is a dense monomorphism in a representable, hence corresponds to a $\overline{K}$-covering sieve on $d$. Moreover, applying this fact to bidense morphism into finite limits, $\mathfrak{a}_K\lext_H$ is exhibited as being lex, hence $H$ to be $K$-flat.     
\end{proof}

\begin{remark}
   If $H$ defines a distributor of sites $(\C,J) \looparrowright (\D, K)$, then it also defines a distributor of sites $(\C,J) \looparrowright (\D, \overline{K})$ as any $K$-covers provides a $\overline{K}$-cover.  
\end{remark}

We can then give the following variant of the proof of \cite{johnstonewraith}[III, proposition 3.2]:

\begin{proposition}\label{adjunction}
    Let $ (\mathcal{C},J)$ and $(\mathcal{D},K)$ be small generated sites; there is an adjunction 
\[\begin{tikzcd}
	{\Top[\widehat{\mathcal{D}}_K, \widehat{\mathcal{C}}_J]} && {\Site^{\mustrill}((\mathcal{C},J)
(\mathcal{D},K))}
	\arrow[""{name=0, anchor=center, inner sep=0}, "{\mathbb{H}}"', curve={height=24pt}, from=1-1, to=1-3]
	\arrow[""{name=1, anchor=center, inner sep=0}, "{\Sh}"', curve={height=24pt}, from=1-3, to=1-1]
	\arrow["\dashv"{anchor=center, rotate=-90}, draw=none, from=1, to=0]
\end{tikzcd}\]
where moreover the functor $\mathbb{H}$ is fully faithful.
\end{proposition}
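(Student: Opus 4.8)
The plan is to exhibit $\mathbb{H}$ explicitly, verify it lands among distributors of sites, and then recognise both $\Sh$ (the functor $H \mapsto \Sh(H)$ of \cref{functoriality for distributor of sites}) and $\mathbb{H}$, after two harmless identifications, as postcomposition with $\mathfrak{a}_K$ and with $\mathfrak{i}_K$ respectively, so that the adjunction and full faithfulness become the pointwise shadow of $\mathfrak{a}_K \dashv \mathfrak{i}_K$ on $\widehat{\D}$.

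First I would define $\mathbb{H}$. To a geometric morphism $f : \widehat{\D}_K \to \widehat{\C}_J$ I assign the distributor $\mathbb{H}(f) : \C \looparrowright \D$ with $\mathbb{H}(f)(d,c) = f^*(\mathfrak{a}_J\hirayo_c)(d)$; equivalently, $\widehat{\mathbb{H}(f)} : \C \to \widehat{\D}$ is $c \mapsto f^*\mathfrak{a}_J\hirayo_c$, the composite of the canonical functor $\mathfrak{a}_J\hirayo_{(-)} : \C \to \widehat{\C}_J$ with $f^*$ and the inclusion $\mathfrak{i}_K$; on transformations, $\mathbb{H}$ whiskers with $\mathfrak{a}_J\hirayo_{(-)}$. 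Since $\widehat{\mathbb{H}(f)}$ already takes values in $K$-sheaves, $\mathfrak{a}_K\widehat{\mathbb{H}(f)} \simeq f^*\mathfrak{a}_J\hirayo_{(-)}$; this functor $\C \to \widehat{\D}_K$ is flat (the flat functor $\mathfrak{a}_J\hirayo_{(-)}$ followed by the lex, cocontinuous functor $f^*$) and sends $J$-covering families to epimorphic families (because $\mathfrak{a}_J\hirayo_{(-)}$ does and $f^*$, being a left adjoint, preserves epimorphic families). By \cref{K-flat distributor equivalences} and \cref{cover dist implies send cover to epifam} this is precisely $K$-flatness and cover-distributivity of $\mathbb{H}(f)$; functoriality of $\mathbb{H}$ is then routine.

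Next I would make two identifications. On the topos side, since $(\C,J)$ is small generated, Diaconescu's theorem yields an equivalence of $\Top[\widehat{\D}_K, \widehat{\C}_J]$ with the category of flat functors $\C \to \widehat{\D}_K$ sending $J$-covers to epimorphic families, $f \mapsto f^*\mathfrak{a}_J\hirayo_{(-)}$. On the distributor side, the classifying equivalence $\Dist[\C,\D] \simeq \Cat[\C,\widehat{\D}]$ together with \cref{K-flat distributor equivalences} and \cref{cover dist implies send cover to epifam} identifies $\Site^{\mustrill}((\C,J),(\D,K))$ with the full subcategory of $\Cat[\C,\widehat{\D}]$ on the functors $P$ such that $\mathfrak{a}_K P$ is flat and sends $J$-covers to epimorphic families. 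Under these, $\mathbb{H}$ becomes postcomposition with $\mathfrak{i}_K$ — which does land in the relevant subcategory, as $\mathfrak{a}_K\mathfrak{i}_K \simeq \mathrm{id}$ — and, crucially, $\Sh$ becomes postcomposition with $\mathfrak{a}_K$: indeed the flat continuous functor attached to $\Sh(H)$ is $\Sh(H)^*\mathfrak{a}_J\hirayo_{(-)} = \mathfrak{a}_K\lext_H\mathfrak{i}_J\mathfrak{a}_J\hirayo_{(-)}$, and since the unit $\hirayo_c \to \mathfrak{i}_J\mathfrak{a}_J\hirayo_c$ is $J$-bidense while $\mathfrak{a}_K\lext_H$ factors through $\mathfrak{a}_J$ by \cref{distributor of sites induce geomorph}, this is naturally isomorphic to $\mathfrak{a}_K\lext_H\hirayo_{(-)} = \mathfrak{a}_K\widehat{H}$.

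With this transport in hand the conclusion is immediate: between the two full subcategories at issue, postcomposition with $\mathfrak{a}_K$ is left adjoint to postcomposition with $\mathfrak{i}_K$, the latter fully faithful, since $\mathfrak{a}_K \dashv \mathfrak{i}_K$ with $\mathfrak{i}_K$ fully faithful holds objectwise on $\widehat{\D}$. The hom-set bijection, the unit (classified by the sheafification unit $\widehat{H} \Rightarrow \mathfrak{i}_K\mathfrak{a}_K\widehat{H}$), the counit (classified by the iso $\mathfrak{a}_K\mathfrak{i}_K f^*\mathfrak{a}_J\hirayo_{(-)} \simeq f^*\mathfrak{a}_J\hirayo_{(-)}$, which is why $\mathbb{H}$ is fully faithful) and the triangle identities all descend pointwise. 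I expect the real work — the main obstacle — to be exactly the identifications of the previous paragraph: establishing that the two equivalences are natural enough for $\Sh$ and $\mathbb{H}$ to transport on the nose, and in particular pinning down that $\Sh(H)$ corresponds to $\mathfrak{a}_K\widehat{H}$, the only place where the fact that $\mathfrak{a}_K\lext_H$ inverts $J$-bidense morphisms is genuinely needed.
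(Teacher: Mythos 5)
Your proposal is correct and follows essentially the same route as the paper: the same construction $\mathbb{H}(f)(d,c) = f^*(\mathfrak{a}_J\hirayo_c)(d)$, the same appeal to \cref{K-flat distributor equivalences} and \cref{cover dist implies send cover to epifam} via $\widehat{\mathbb{H}(f)} \simeq \mathfrak{i}_K f^*\mathfrak{a}_J\hirayo_{\C}$, and the same mechanism for the adjunction, namely transporting along the classifying and Diaconescu equivalences so that the hom-set bijection reduces to the pointwise adjunction $\mathfrak{a}_K \dashv \mathfrak{i}_K$, with full faithfulness of $\mathbb{H}$ coming from the counit isomorphism $\Sh(\mathbb{H}(f))^* \simeq f^*$. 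Your explicit justification that $\Sh(H)$ corresponds to $\mathfrak{a}_K\widehat{H}$ under Diaconescu (via $J$-bidenseness of the unit) is a point the paper leaves implicit in the last step of its chain of isomorphisms, but it is the same argument.
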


\begin{proof}
We have seen at \cref{functoriality for distributor of sites} that $\Sh$ is part of a pseudofunctor $ (\Site^{\mustrill})^{\op} \rightarrow \Top$ sending $H$ to the geometric morphism $ \Sh(H)$ with inverse image $ \mathfrak{a}_K\lext_H \mathfrak{i}_J$. Conversely, if $ f : \widehat{\D}_K \rightarrow \widehat{\C}_J$ is a geometric morphism, we can extract a distributor $ H_f : \C \looparrowright \D$ sending $ (c,d)$ to the evaluation $ f^*(\mathfrak{a}_J\hirayo_c)(d)$, seeing $f^*(\mathfrak{a}_J\hirayo_c) $ as a sheaf on $(\D,K)$. We must prove that $H_f$ is both $K$-flat and cover distributing from $J$ to $K$, but in both cases, this is because the functor $ \mathfrak{a}_K\widehat{H_f}$ is so, for one has $ \widehat{H_f} : \C \rightarrow \widehat{\D}$ coincides with the functor $ \mathfrak{i}_Kf^*\mathfrak{a}_J\hirayo_{\C} $ as depicted below
\[\begin{tikzcd}
	\C & {\widehat{\D}} \\
	{\widehat{\C}_J} & {\widehat{\D}_K}
	\arrow["{\widehat{H_f}}", from=1-1, to=1-2]
	\arrow["{\mathfrak{a}_J \hirayo_\C}"', from=1-1, to=2-1]
	\arrow["{f^*}"', from=2-1, to=2-2]
	\arrow["{\mathfrak{i}_K}"', hook, from=2-2, to=1-2]
\end{tikzcd}\]
and this functor is flat and sends $J$-cover to epimorphic families, as predicted by Diaconescu correspondence; but from \cref{K-flat distributor equivalences} and \cref{cover dist implies send cover to epifam}, those two conditions translate for $H_f$ as being $K$-flat and cover-distributing from $J$ to $K$. Observe that, starting form a geometric morphism $f$, we have $ \mathfrak{a}_K \widehat{H_f} \simeq f^* \mathfrak{a}_J \hirayo_\C $ 
\begin{align*}
    \Sh(H_f)^* &\simeq \mathfrak{a}_K\lext_{H_f} \mathfrak{i}_J \\ &\simeq \lan_{\hirayo_\C} (\mathfrak{a}_K\widehat{H_f}) \mathfrak{i}_J \\
    &\simeq \lan_{\hirayo_\C} (f^* \mathfrak{a}_J \hirayo_\C) \mathfrak{i}_J \\
    &\simeq f^*
\end{align*}
where the last isomorphism comes from the fact that $f^*$ is the inverse image of a geometric morphism and is induced from its restriction $ \C \rightarrow \widehat{\D}_K$ through Diaconescu equivalence (beware that the $\lan$ above are computed in sheaves). However, on the other hand, taking a distributor $ H$ and then $ H_{\Sh(H)}$ only recovers the distributor $ \C \looparrowright \D$ sending $ (d,c)$ to $ \mathfrak{a}_K\widehat{H}(c)(d)$, which may be distinct from $ H(d,c)$ itself. 

We must now establish the desired adjunction: using that $ \Site^{\mustrill}[(\C,J), (\D,K)]$ is a full subcategory of $ \Dist[\C, \D]$ and that similarly $ \Cont_J[\C, \widehat{\D}_K]$ is a full subcategory of $ [\C,\widehat{\D}_K]$, we have a sequence of isomorphisms
\begin{align*}
    \Site^{\mustrill}[(\C,J), (\D,K)] &\simeq \Dist[\C, \D][H,H_f] \\ &\simeq [\C, \widehat{\D}][\widehat{H}, \widehat{H_f}] \\
    &\simeq [\C, \widehat{\D}][\widehat{H}, \mathfrak{i}_K f^*\mathfrak{a}_J\hirayo_\C] \\
    &\simeq \Cont_J[\C, \widehat{\D}_K][\mathfrak{a}_K\widehat{H}, f^*\mathfrak{a}_J\hirayo_\C] \\
    &\simeq \Top[\widehat{\mathcal{D}}_K, \widehat{\mathcal{C}}_J][\Sh(H), f]
\end{align*}
\end{proof}

The hindrance to an equivalence is the problem that sheafification $ \mathfrak{a}_K$ may identify distributors. This can be fixed by adding a condition of continuity:

    \begin{definition}\label{defrelativeprofunctor}
    A distributor of sites $ H : (\mathcal{C}, J) \looparrowright (\mathcal{D},K)$ will be said to be \emph{$(J,K)$-continuous} if it satisfies that $\widehat{H}:{\cal C}\to \widehat{\mathcal{D}}$ takes values in the sheaf topos $\widehat{\mathcal{D}}_K$. 

\end{definition}

\begin{remark}
    The condition says that for each $c$ in $\mathcal{C}$, $ \widehat{H}(c)$ is a $K$-sheaf: for any $d$ and any $K$-covering sieve $T$ of $d$, one has a limit
    \[ H(d,c) \simeq \underset{v : d' \rightarrow d \in T}{\lim} \; H(d',c) \]
\end{remark}

\begin{remark}
    For any distributor of sites $ H : (\C,J) \looparrowright (\D, K)$, it is true by construction that $ \mathfrak{a}_K\widehat{H}$ lands in $\widehat{\D}_K$, but this process may identify distinct distributors that are not isomorphic nor even directly related by an arrow. However, if we assume that $ \widehat{H}$ lands in $\widehat{\D}_K$, then one can compute the left extension $ \lan_{\hirayo_\C} \widehat{H}$ directly in $\widehat{\D}_K$, and Diaconescu equivalence ensures that $\Sh(H)^*$ is uniquely determined by $\widehat{H}$ itself.   
\end{remark}


\begin{remark}
    Beware that continuous distributors of sites do not compose inside of $\Site^{\mustrill}$: for two continuous distributors of sites $ H : (\C,J) \looparrowright (\D, K)$ and $ G : (\D,K) \looparrowright (\B, L)$, the composite $ G \otimes 
    H$ may not be continuous; although $\widehat{G}$ lands in $\widehat{\B}_L$, its extension $ \lan_{\hirayo_\D} \widehat{G}$, though factorizing through $ \widehat{\D}_K$, needs not again landing in $\widehat{\B}_L$ for sheaves are not closed under colimits in $ \widehat{\D}$ (one has to perform sheafification after computing colimits as presheaves). For this reason the classifying functor of the composite $ \widehat{G \otimes H}$, which is $ \lext_G \widehat{H}$, needs not landing in $\widehat{\B}_L$, and has to be sheafified.
    In a similar way, the unit distributors $ \hom_\C : (\C, J) \looparrowright (\C,J)$ needs not be continuous, as its classifying functor is $ \hirayo_\C : \C \hookrightarrow \widehat{\C}$ which does not always land in $\widehat{\C}_J$ -- this happens exactly when $J$ is subcanonical. 
\end{remark}

\begin{division}
    Consequently, to define a bicategorical structure for continuous comorphisms, one needs to modify the notion of unit and composition:\begin{itemize}
        \item for a site $(\C,J)$, define its unit as the distributor $ \mathfrak{h}_{(\C,J)} : (\C,J) \looparrowright (\C,J)$ sending $ (c,c')$ to the sheafification of the homset $ \mathfrak{a}_J\hirayo_c(c')$, to that one has 
        \[ \widehat{\mathfrak{h}_{(\C,J)}} \simeq \mathfrak{a}_J \hirayo_\C \]
        \item for two continuous distributors of sites $ H : (\C,J) \looparrowright (\D, K)$ and $ G : (\D,K) \looparrowright (\B, L)$, define the composite $ G \boxtimes H$ as the continuous distributor whose classifying functor is the sheafification of $ G\otimes H$ as distributors of sites
        \[ \widehat{G \boxtimes H} \simeq \mathfrak{a}_L\lext_G \widehat{H}  \]
    \end{itemize}
    Then one can check that $ \mathfrak{h}_{(\C,J)}$ is a unit for this composition which is weakly associative: this defines a structure of bicategory for (small generated) sites, continuous distributors of sites and transformations between them, which we will denote as $ \Site^{\mustrill}_{\Cont}$ this bicategory.    
\end{division}

\begin{remark}
    For they have different composition and units, beware that $\Site^{\mustrill}_{\Cont}$ is \emph{not} a sub-2-category of $\Site^{\mustrill}$: it rather is a 2-localization obtained by inverting formally a class of 2-cells corresponding to pointwise sheafifications, so we have a pseudofunctor 
    \[\begin{tikzcd}
	   {\Site^{\mustrill}} & {\Site^{\mustrill}_{\Cont}}
	   \arrow["{\mathfrak{a}}", from=1-1, to=1-2]
    \end{tikzcd}\]
    sending $ H : (\C,J) \rightarrow  (\D,K)$ to the pointwise sheafification $ \mathfrak{a}H : (\C,J) \rightarrow  (\D,K)$ whose classifying functor is defined at each $c$ in $\C$ as
    \[ \widehat{\mathfrak{a}H} = \mathfrak{i}_K\mathfrak{a}_K\widehat{H}(c) \]
    This pseudofunctor is bijective on objects and surjective on 1-cells, but it inverts transformations of distributors $ \phi :  H \Rightarrow H'$ such that $ \mathfrak{a}_K \widehat{\phi} : \mathfrak{a}_K\widehat{H} \Rightarrow \mathfrak{a}_K\widehat{H'}$ is invertible. In particular it inverts the transformations classified by pointwise sheafifications $ \widehat{H} \Rightarrow \mathfrak{a}_K \widehat{H}$.

\end{remark}

\begin{proposition}
    We have a factorization
\[\begin{tikzcd}
	{(\Site^{\mustrill})^{\op}} && \Top \\
	& {(\Site^{\mustrill}_{\Cont})^{\op}}
	\arrow["\Sh", from=1-1, to=1-3]
	\arrow["{\mathfrak{a}^{\op}}"', two heads, from=1-1, to=2-2]
	\arrow["\overline{\Sh}"', dashed, from=2-2, to=1-3]
\end{tikzcd}\]
    where the left hand of the factorization is locally fully faithful. 
\end{proposition}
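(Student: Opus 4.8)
The plan is to produce the dashed pseudofunctor $\overline{\Sh}$ from the universal property of the $2$-localization $\mathfrak{a}$, and then to read off local full faithfulness of $\overline{\Sh}$ from the reflective adjunction of \cref{adjunction}.

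\emph{Producing the factorization.} First I would check that $\Sh$ sends to invertible $2$-cells every transformation inverted by $\mathfrak{a}$. For $\phi\colon H\Rightarrow H'$, the geometric transformation $\Sh(\phi)$ is built from $\widehat{\phi}\colon\widehat{H}\Rightarrow\widehat{H'}$ by passing to $\lext_H\Rightarrow\lext_{H'}$, then sheafifying and restricting; pushing the left adjoint $\mathfrak{a}_K$ past the Kan extension $\lext_{(-)}=\lan_{\hirayo_{\C}}\widehat{(-)}$ as in \cref{distributor of sites induce geomorph}, one sees that $\Sh(\phi)^{*}$ depends on $\phi$ only through $\mathfrak{a}_{K}\widehat{\phi}$ and is a whiskering of $\lan_{\hirayo_{\C}}(\mathfrak{a}_{K}\widehat{\phi})$; hence it is invertible whenever $\mathfrak{a}_{K}\widehat{\phi}$ is, which is precisely the defining condition of the $2$-cells formally inverted by $\mathfrak{a}$. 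Since $\mathfrak{a}\colon\Site^{\mustrill}\to\Site^{\mustrill}_{\Cont}$ is the $2$-localization at that class, and passing to opposite bicategories leaves hom-categories (hence $2$-cell localizations) untouched, $\mathfrak{a}^{\op}$ is again the localization, and its universal property yields an essentially unique pseudofunctor $\overline{\Sh}\colon(\Site^{\mustrill}_{\Cont})^{\op}\to\Top$ with $\overline{\Sh}\circ\mathfrak{a}^{\op}\simeq\Sh$, automatically compatible with the modified composition $\boxtimes$ and unit $\mathfrak{h}_{(\C,J)}$. Concretely $\overline{\Sh}$ sends $(\C,J)$ to $\widehat{\C}_{J}$ and a continuous distributor of sites $H$ to $\Sh(H)$, since $\mathfrak{a}$ is the identity on objects and, for continuous $H$, the comparison $\widehat{H}\Rightarrow\mathfrak{i}_{K}\mathfrak{a}_{K}\widehat{H}$ classifying $H\Rightarrow\mathfrak{a}H$ is invertible, so $\mathfrak{a}H\simeq H$.

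\emph{Local full faithfulness.} By the very definition of $\Site^{\mustrill}_{\Cont}$, the hom-category $\Site^{\mustrill}_{\Cont}[(\C,J),(\D,K)]$ is the full subcategory of $\Site^{\mustrill}[(\C,J),(\D,K)]$ on the continuous distributors of sites, and $\overline{\Sh}$ acts on it exactly as $\Sh$ does; so it suffices to prove that $\Sh$, restricted to continuous distributors of sites, is fully faithful into $\Top[\widehat{\D}_{K},\widehat{\C}_{J}]$. Here I would invoke \cref{adjunction}: the adjunction $\Sh\dashv\mathbb{H}$ with $\mathbb{H}=H_{(-)}$ fully faithful exhibits $\Top[\widehat{\D}_{K},\widehat{\C}_{J}]$ as a reflective subcategory of $\Site^{\mustrill}[(\C,J),(\D,K)]$ with reflector $\Sh$, and its essential image is exactly the full subcategory of continuous distributors of sites: on one side $\widehat{H_{f}}=\mathfrak{i}_{K}f^{*}\mathfrak{a}_{J}\hirayo_{\C}$ plainly takes values in $\widehat{\D}_{K}$, so $H_{f}$ is continuous; on the other side, for continuous $H$ one has (as observed in the proof of \cref{adjunction}) $\widehat{H_{\Sh(H)}}\simeq\mathfrak{i}_{K}\mathfrak{a}_{K}\widehat{H}\simeq\widehat{H}$, so the reflection unit $H\to H_{\Sh(H)}$ is invertible. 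A reflective adjunction whose right adjoint is fully faithful restricts to an equivalence between the base and the reflective subcategory; thus $\Sh$ restricted to continuous distributors of sites is an equivalence onto $\Top[\widehat{\D}_{K},\widehat{\C}_{J}]$, in particular fully faithful, and therefore $\overline{\Sh}$ is locally fully faithful (in fact locally an equivalence).

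\emph{Main obstacle.} The only delicate point is the bookkeeping of the first paragraph: confirming that the class of $2$-cells defining $\mathfrak{a}$ is literally $\{\phi : \mathfrak{a}_{K}\widehat{\phi}\text{ invertible}\}$ (not merely a generating subclass), that all of it is killed by $\Sh$, and that the universal property of a $2$-localization survives passage to opposite bicategories so that $\overline{\Sh}$ is a genuine pseudofunctor with the stated value on $1$-cells and $2$-cells. Once this is settled, the second paragraph is a routine transport through \cref{adjunction} together with the identification of continuous distributors of sites with the essential image of $\mathbb{H}$.
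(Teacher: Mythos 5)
Your proposal is correct, and the two halves of it relate to the paper's proof in different ways. For the factorization itself, the key step is identical: you and the paper both verify that $\Sh(\phi)^*$ only depends on $\phi$ through $\mathfrak{a}_K\widehat{\phi}$, so that $\Sh$ inverts exactly the $2$-cells inverted by $\mathfrak{a}$. Where you diverge is in how $\overline{\Sh}$ is produced as a pseudofunctor: you appeal to the universal property of the $2$-localization and take compatibility with $\boxtimes$ and $\mathfrak{h}_{(\C,J)}$ as automatic, whereas the paper verifies this compatibility by a direct computation, chaining $\Sh(G\otimes H)^*\simeq\mathfrak{a}_L\lext_G\lext_H\mathfrak{i}_J\simeq\lan_{\hirayo_\C}\widehat{G\boxtimes H}\,\mathfrak{i}_J\simeq\Sh(G\boxtimes H)^*$ and similarly $\Sh(\hom_\C)^*\simeq\Sh(\mathfrak{h}_{(\C,J)})^*$. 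The paper's route is more self-contained, since the preceding remark describing $\mathfrak{a}$ as a $2$-localization is itself informal (the class of inverted $2$-cells and the universal property are not pinned down there), which is precisely the bookkeeping you flag as the delicate point; your route is cleaner if one grants that remark in full strength. For the local full faithfulness of $\overline{\Sh}$, the paper's proof is silent: that claim is effectively deferred to \cref{continuous distributors of sites corresponds with geometric morphisms}, which is stated afterwards without proof. Your second paragraph supplies exactly the missing argument, identifying the continuous distributors with the full subcategory on which the unit $H\to H_{\Sh(H)}$ of the adjunction of \cref{adjunction} is invertible, so that the reflection restricts to an equivalence onto $\Top[\widehat{\D}_K,\widehat{\C}_J]$; this is a genuine improvement in completeness over what is printed.
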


\begin{proof}
    We have to define first a pseudofunctor from $\Site^{\mustrill}_{\Cont}$, which has to be compatible with its own bicategorical structure. But by construction $ \Sh(H)^* = \mathfrak{a}_K \lext_H \mathfrak{i}_J $; this is the extension of $ \mathfrak{a}_K\widehat{H}$. Hence for $ H : (\C,J) \looparrowright (\D, K)$ and $ G : (\D,K) \looparrowright (\B, L)$ two continuous distributors of sites, we have
    \begin{align*}
         \Sh(G \otimes H)^* &= \mathfrak{a}_L \lext_{G \otimes H} \mathfrak{i}_J \\ 
         &\simeq \mathfrak{a}_L \lext_{G }\lext_{H} \mathfrak{i}_J \\
        &\simeq \mathfrak{a}_L (\lext_{G } \lan_{\hirayo_\C} \widehat{H} ) \mathfrak{i}_J \\
        &\simeq \lan_{\hirayo_\C}(\mathfrak{a}_L \lext_{G }  \widehat{H}) \mathfrak{i}_J \\
        &\simeq \lan_{\hirayo_\C} \widehat{G \boxtimes H} \mathfrak{i}_J \\
        &\simeq \mathfrak{a}_L\lext_{G \boxtimes H} \mathfrak{i}_J \\
        &\simeq \Sh(G \boxtimes H)^*
    \end{align*}
    Similarly, one has 
    \begin{align*}
        \Sh(\hom_\C)^* &\simeq \mathfrak{a}_J \lan_{\hirayo_\C}\hirayo_\C \mathfrak{i}_J \\ 
        &\simeq \lan_{\hirayo_\C}\mathfrak{a}_J \hirayo_\C \mathfrak{i}_J \\
        &\simeq \Sh(\mathfrak{h}_{(\C,J)})^*
    \end{align*}
    Now if $ \phi : H \Rightarrow H'$ is a transformation of distributors of sites (which may not be continuous) such that $ \mathfrak{a}\phi$ is invertible; this means that $ \mathfrak{a}_K\phi$ is invertible, to that as, being isomorphic, hence $ \Sh(\phi)^* : \Sh(H)^* \simeq \Sh(H')^*$: so $\Sh$ localizes the same transformations of distributors as $\mathfrak{a}$, so it factorizes through it.   
\end{proof}

In fact, for $ (\C,J), (\D,K)$, the category $\Site^{\mustrill}_{\Cont}[(\mathcal{C},J), (\mathcal{D},K)] $ is a reflective subcategory of $ \Site^{\mustrill}[(\mathcal{C},J), (\mathcal{D},K)]$, and in fact precisely the one for which the adjunction of \cref{adjunction} restricts to an equivalence: 

\begin{theorem}\label{continuous distributors of sites corresponds with geometric morphisms}
    For any small generated sites $(\C,J)$, $(\D,K)$ we have an equivalence of categories
    \[  \Top[\widehat{\mathcal{D}}_K, \widehat{\mathcal{C}}_J] \simeq  \Site^{\mustrill}_{\Cont}[(\mathcal{C},J), (\mathcal{D},K)] \]
\end{theorem}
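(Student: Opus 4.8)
The plan is to show that the adjunction $\mathbb{H}\dashv\Sh$ of \cref{adjunction} restricts to an adjoint equivalence onto the full subcategory of $(J,K)$-continuous distributors of sites. Since $\mathbb{H}$ is fully faithful, the counit $\Sh\circ\mathbb{H}\Rightarrow\mathrm{id}$ is already invertible --- this is precisely the chain of isomorphisms $\Sh(H_f)^*\simeq f^*$ established inside the proof of \cref{adjunction}. So everything reduces to two points: (i) the essential image of $\mathbb{H}$ consists exactly of the $(J,K)$-continuous distributors of sites; and (ii) for such $H$ the unit $H\Rightarrow H_{\Sh(H)}=\mathbb{H}\Sh(H)$ is invertible. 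Recall that, for a fixed pair of sites, $\Site^{\mustrill}_{\Cont}[(\mathcal{C},J),(\mathcal{D},K)]$ is by definition the full subcategory of $\Site^{\mustrill}[(\mathcal{C},J),(\mathcal{D},K)]$ on continuous distributors of sites (the modified composition and units of $\Site^{\mustrill}_{\Cont}$ only intervene when the source or target site varies), so (i) and (ii) together yield the claimed equivalence.

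For one inclusion in (i), I would observe that $\mathbb{H}(f)=H_f$ has classifying functor $\widehat{H_f}\simeq\mathfrak{i}_K f^*\mathfrak{a}_J\hirayo_\C$, which visibly factors through the inclusion $\mathfrak{i}_K\colon\widehat{\mathcal{D}}_K\hookrightarrow\widehat{\mathcal{D}}$; hence $H_f$ is $(J,K)$-continuous by \cref{defrelativeprofunctor}. Thus $\mathbb{H}$ lands in $\Site^{\mustrill}_{\Cont}[(\mathcal{C},J),(\mathcal{D},K)]$.

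For the reverse inclusion together with (ii), let $H$ be $(J,K)$-continuous, so that $\widehat{H}$ takes values in $\widehat{\mathcal{D}}_K$ and $\mathfrak{a}_K\widehat{H}\simeq\widehat{H}$. I would compute the classifying functor of $H_{\Sh(H)}$ at an object $c$ of $\mathcal{C}$: by the description of $H_f$ in \cref{adjunction} and $\Sh(H)^*=\mathfrak{a}_K\lext_H\mathfrak{i}_J$, one has $\widehat{H_{\Sh(H)}}(c)\simeq\mathfrak{i}_K\,\mathfrak{a}_K\lext_H\mathfrak{i}_J\mathfrak{a}_J\hirayo_c$. The unit $\hirayo_c\to\mathfrak{i}_J\mathfrak{a}_J\hirayo_c$ is $J$-bidense, and by \cref{distributor of sites induce geomorph} the functor $\mathfrak{a}_K\lext_H$ factorizes through $\mathfrak{a}_J$, hence inverts every $J$-bidense morphism; therefore $\mathfrak{a}_K\lext_H\mathfrak{i}_J\mathfrak{a}_J\hirayo_c\simeq\mathfrak{a}_K\lext_H\hirayo_c\simeq\mathfrak{a}_K\widehat{H}(c)$. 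Continuity gives $\mathfrak{a}_K\widehat{H}(c)\simeq\widehat{H}(c)$, and since $\widehat{H}(c)$ is already a $K$-sheaf, $\mathfrak{i}_K$ leaves it unchanged, so $\widehat{H_{\Sh(H)}}(c)\simeq\widehat{H}(c)$ naturally in $c$. Through the equivalence $\Dist[\mathcal{C},\mathcal{D}]\simeq[\mathcal{C},\widehat{\mathcal{D}}]$ this gives $H_{\Sh(H)}\simeq H$, and unwinding shows the isomorphism is the unit of the adjunction, namely the pointwise-sheafification transformation $\widehat{H}\Rightarrow\mathfrak{a}_K\widehat{H}$, which is invertible exactly because $H$ is continuous. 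Hence the adjunction $\mathbb{H}\dashv\Sh$ restricts to an adjoint equivalence $\Top[\widehat{\mathcal{D}}_K,\widehat{\mathcal{C}}_J]\simeq\Site^{\mustrill}_{\Cont}[(\mathcal{C},J),(\mathcal{D},K)]$.

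The only genuinely delicate step is the chain of isomorphisms establishing $H_{\Sh(H)}\simeq H$ for continuous $H$: one must keep track of the fact that $\lext_H$ is computed in presheaves, where it need not preserve sheaves, yet after composing with $\mathfrak{a}_K$ and precomposing with the sheafified representable everything collapses --- the $J$-bidense-inverting property of \cref{distributor of sites induce geomorph} eats the sheafification unit, and continuity of $H$ then identifies the result with $\widehat{H}$ itself. The remaining verifications (naturality in $c$, identification of the resulting isomorphism with the adjunction unit, and the remark that $\Site^{\mustrill}_{\Cont}[(\mathcal{C},J),(\mathcal{D},K)]$ is the relevant full subcategory) are routine.
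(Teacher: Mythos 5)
Your proposal is correct and follows exactly the route the paper intends: the paper leaves the proof implicit in the remark preceding the theorem (that the adjunction of \cref{adjunction} restricts to an equivalence on the continuous distributors) and in the earlier observation that $H_{\Sh(H)}(d,c) \simeq \mathfrak{a}_K\widehat{H}(c)(d)$, and you have simply supplied the details — the counit is invertible by full faithfulness of $\mathbb{H}$, the essential image of $\mathbb{H}$ is the continuous distributors, and the unit is the pointwise sheafification $\widehat{H}\Rightarrow\mathfrak{a}_K\widehat{H}$, invertible precisely when $H$ is continuous. Nothing is missing.
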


\begin{corollary}
    For two sites $(\mathcal{C},J)$, $(\mathcal{D},K)$, any geometric morphism $\widehat{\mathcal{D}}_K \rightarrow \widehat{\mathcal{C}}_J $ can be induced uniquely as $\Sh(H_f)$ from a continuous distributor of sites $ H_f : (\mathcal{C},J) \looparrowright  (\mathcal{D},K) $.
\end{corollary}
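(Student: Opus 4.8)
The plan is to show that the fully faithful functor $\mathbb{H}\colon\Top[\widehat{\D}_K,\widehat{\C}_J]\to\Site^{\mustrill}[(\C,J),(\D,K)]$ of \cref{adjunction} corestricts to an equivalence onto the full subcategory $\Site^{\mustrill}_{\Cont}[(\C,J),(\D,K)]$ of continuous distributors of sites. Since $\mathbb{H}$ is fully faithful and $\Site^{\mustrill}_{\Cont}[(\C,J),(\D,K)]$ is a full subcategory closed under isomorphism (continuity being a property of $\widehat{H}$ alone, hence invariant under isomorphism of distributors), it suffices to prove two things: that $\mathbb{H}$ takes values in continuous distributors of sites, and that every continuous distributor of sites lies in its essential image. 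The first gives a corestriction $\mathbb{H}\colon\Top[\widehat{\D}_K,\widehat{\C}_J]\to\Site^{\mustrill}_{\Cont}[(\C,J),(\D,K)]$, which remains fully faithful since the inclusion of a full subcategory is; the second makes it essentially surjective, hence an equivalence.

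For the first point, recall from the proof of \cref{adjunction} that for a geometric morphism $f$ the distributor $H_f=\mathbb{H}(f)$ has classifying functor $\widehat{H_f}\simeq\mathfrak{i}_K f^{*}\mathfrak{a}_J\hirayo_{\C}$; since $f^{*}$ lands in $\widehat{\D}_K$, each value $\widehat{H_f}(c)=\mathfrak{i}_K f^{*}\mathfrak{a}_J\hirayo_c$ is a $K$-sheaf, so $H_f$ is $(J,K)$-continuous. For the second point — \emph{the crux} — let $H$ be a continuous distributor of sites and put $f=\Sh(H)$. By the computation at the end of the proof of \cref{adjunction}, $\mathbb{H}(f)=H_{\Sh(H)}$ is the pointwise sheafification of $H$, with classifying functor $\mathfrak{i}_K\mathfrak{a}_K\widehat{H}$, and the unit $\eta_H\colon H\to H_{\Sh(H)}$ of $\Sh\dashv\mathbb{H}$ is the transformation classified by the sheafification unit $\widehat{H}\Rightarrow\mathfrak{i}_K\mathfrak{a}_K\widehat{H}$. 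When $H$ is continuous, each $\widehat{H}(c)$ is already a $K$-sheaf, so this unit is componentwise invertible, hence $\eta_H$ is an isomorphism of distributors and $H\simeq\mathbb{H}(\Sh(H))$ lies in the essential image. This establishes the equivalence, with quasi-inverse the functor induced by $\Sh$.

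I expect the only delicate point to be the bookkeeping in the second step: identifying the unit of the adjunction $\Sh\dashv\mathbb{H}$ with the pointwise sheafification transformation $\widehat{H}\Rightarrow\mathfrak{i}_K\mathfrak{a}_K\widehat{H}$, and invoking Diaconescu in the form of the remark preceding the theorem — namely that for continuous $H$ the extension $\lan_{\hirayo_{\C}}\widehat{H}$ may be computed directly in $\widehat{\D}_K$, so that $\Sh(H)^{*}$ is determined by $\widehat{H}$, and conversely $\widehat{H}$ is recovered as the restriction of $\Sh(H)^{*}$ along $\mathfrak{a}_J\hirayo_{\C}$. Once this is in place the equivalence is immediate, and the corollary below follows from it together with the counit isomorphism $\Sh(H_f)\simeq f$ (valid because $\mathbb{H}$ is fully faithful): every geometric morphism $\widehat{\D}_K\to\widehat{\C}_J$ is $\Sh(H_f)$ for the continuous distributor of sites $H_f$, uniquely up to isomorphism by the equivalence.
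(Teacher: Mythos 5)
Your argument is correct and is exactly the verification the paper leaves implicit: it states \cref{continuous distributors of sites corresponds with geometric morphisms} without proof, relying on the same ingredients you use — the adjunction and full faithfulness of $\mathbb{H}$ from \cref{adjunction}, the observation that $\widehat{H_f}\simeq\mathfrak{i}_K f^*\mathfrak{a}_J\hirayo_{\C}$ is sheaf-valued, and the remark that for continuous $H$ the unit $\widehat{H}\Rightarrow\mathfrak{i}_K\mathfrak{a}_K\widehat{H}$ is invertible so that $H$ lies in the essential image of $\mathbb{H}$. Your identification of the continuous distributors with the fixed points of the (co)reflection, and the deduction of the corollary from the counit isomorphism, match the paper's intended route.
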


We saw at \cref{cover dist subsume cover pres and cover lift} that a functor between sites is cover-preserving (resp. cover-lifting) if and only if the corresponding representable (resp. corepresentable) is cover-distributing: we are now able to define embeddings of the 2-categories $ \Site^\flat$ and $\Site^\sharp$ into the double category $ \Site^{\mustrill}$ of sites, distributors of sites and transformations, where $ \Site^\flat$ and $\Site^\sharp$, using our terminology introduced in \cite{caramello2025morphisms}, refer to the 2-categories of sites with, respectively, morphisms and comorphisms as 1-cells, and natural transformations as 2-cells.

\begin{lemma}
    For any morphism of sites $ f : (\C,J) \rightarrow (\D,K)$, the corresponding representable $ \D[1, f] $ defines a distributor of sites $ (\C, J) \looparrowright (\D,K)$. 
\end{lemma}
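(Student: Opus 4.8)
The plan is to recognize the statement as an immediate consequence of the characterizations already obtained, applied to the representable distributor $\D(1,f)$. Recall that a morphism of sites $f : (\C,J) \to (\D,K)$ is by definition a functor that is simultaneously \emph{cover-preserving} and \emph{cover-flat}, the latter condition being exactly that the composite $\mathfrak{a}_K \hirayo_{\D} f : \C \to \widehat{\D}_K$ is a flat functor (equivalently, \emph{filtering up to $K$-covers}). On the other hand, by \cref{Extensions of representable distributors} the classifying functor of $\D(1,f) : \C \looparrowright \D$ is precisely $\widehat{\D(1,f)} = \hirayo_{\D} f : \C \to \widehat{\D}$, so that $\mathfrak{a}_K \widehat{\D(1,f)} = \mathfrak{a}_K \hirayo_{\D} f$.

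First I would verify cover-distribution: by \cref{cover dist subsume cover pres and cover lift}, the distributor $\D(1,f)$ is cover-distributing from $J$ to $K$ if and only if $f$ is cover-preserving, which holds by hypothesis.

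Next I would verify $K$-flatness: by \cref{K-flat distributor equivalences}, $\D(1,f)$ is $K$-flat if and only if the induced functor $\mathfrak{a}_K \widehat{\D(1,f)} : \C \to \widehat{\D}_K$ is flat; but by the identification above this functor is $\mathfrak{a}_K \hirayo_{\D} f$, whose flatness is precisely the cover-flatness of $f$, true by hypothesis. Combining the two items, $\D(1,f)$ is both cover-distributing and $K$-flat, hence a distributor of sites $(\C,J) \looparrowright (\D,K)$.

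The only point requiring care — and the one I would state explicitly — is the translation between the classical filtering-up-to-$K$-cover formulation of cover-flatness of $f$ and the three conditions of \cref{cover-flat distributor} for the distributor $H = \D(1,f)$, where $H(d,c) = \D(d,f(c))$; but this translation is exactly what \cref{K-flat distributor equivalences} provides (matching \cref{cover-flatness for distributor synthetic form}), so no new diagram chase is needed. One could alternatively unwind conditions (1)--(3) of \cref{cover-flat distributor} by hand: (1) asks that the sieve of arrows into $d$ factoring through some object in the image of $f$ be $K$-covering, while (2)--(3) ask that finite cones, respectively equalizing pairs, of arrows into the image of $f$ be completed inside that image up to a $K$-cover — which is word-for-word the covering-flatness of $f$.
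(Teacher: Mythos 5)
Your proposal is correct and follows essentially the same route as the paper: cover-distribution is obtained from \cref{cover dist subsume cover pres and cover lift} applied to the cover-preservation of $f$, and $K$-flatness is obtained from \cref{K-flat distributor equivalences} via the identification $\widehat{\D(1,f)} = \hirayo_{\D} f$ together with the fact that covering-flatness of $f$ means exactly that $\mathfrak{a}_K \hirayo_{\D} f$ is flat. The paper's proof is the same two-step argument, so no further comment is needed.
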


\begin{proof}
    We saw that if $f$ is cover-preserving, then $ \D[1,f]$ is cover-distributing. Now suppose that $ f$ is also covering-flat, then $ \mathfrak{a}_K \hirayo_{\D} f : \C \rightarrow \widehat{\D}_K $ is a also flat functor, but recall that $\hirayo_{\D} f = \D[1,f]$ seen as a its own classifying functor $ \C \rightarrow \widehat{H}$: hence by \cref{K-flat distributor equivalences} we know that $ \D[1,f]$ is $K$-flat as a distributor. 
\end{proof}

\begin{lemma}
    For any comorphism of sites $ f : (\D,K) \rightarrow (\C,J)$, the corresponding corepresentable $ \C[f,1]$ is a distributor of sites $ (\C, J) \looparrowright (\D,K)$. 
\end{lemma}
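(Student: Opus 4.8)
The plan is to verify separately the two requirements in the definition of a distributor of sites for the corepresentable $\C(f,1):\C\looparrowright\D$ — which sends $(d,c)$ to $\C(f(d),c)$ — namely that it is cover-distributing from $J$ to $K$, and that it is $K$-flat. The first requirement is immediate: since $f:\D\to\C$ is a comorphism of sites $(\D,K)\to(\C,J)$, i.e.\ a cover-reflecting functor, \cref{cover dist subsume cover pres and cover lift} already yields that $\C(f,1)$ is cover-distributing from $J$ to $K$.

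For $K$-flatness, I would use the third characterization in \cref{K-flat distributor equivalences}: it suffices to exhibit $\mathfrak{a}_K\lext_{\C(f,1)}\mathfrak{i}_J:\widehat{\C}_J\to\widehat{\D}_K$ as lex. The key input is \cref{Extensions of representable distributors}, which identifies the cocontinuous functor attached to a corepresentable with a restriction functor: $\lext_{\C(f,1)}\simeq\rest_f$. Now $\rest_f:\widehat{\C}\to\widehat{\D}$, being a right adjoint (to $\lext_f$), preserves all limits; composing on the left with the sheafification $\mathfrak{a}_K$, which is left exact as the left adjoint of a geometric embedding, the functor $\mathfrak{a}_K\rest_f\simeq\mathfrak{a}_K\lext_{\C(f,1)}:\widehat{\C}\to\widehat{\D}_K$ is already lex — and a fortiori so is its precomposition $\mathfrak{a}_K\lext_{\C(f,1)}\mathfrak{i}_J$ with $\mathfrak{i}_J$ (which preserves limits, being a right adjoint, and along which finite limits of sheaves are computed as in presheaves). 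Hence $\C(f,1)$ is $K$-flat, and being also cover-distributing, it is a distributor of sites $(\C,J)\looparrowright(\D,K)$.

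I do not expect any genuine obstacle here; the only care needed is bookkeeping — matching the convention that a ``cover-reflecting functor $f:\D\to\C$'' in \cref{cover dist subsume cover pres and cover lift} is precisely our comorphism of sites $(\D,K)\to(\C,J)$, and observing that invoking \cref{K-flat distributor equivalences} is legitimate because cover-distribution of $\C(f,1)$ has already been established. What is worth highlighting is the conceptual contrast with the morphism case treated just above: there one genuinely needs the covering-flatness hypothesis on $f$ to obtain $K$-flatness of the representable $\D(1,f)$, whereas for a comorphism the $K$-flatness of the corepresentable $\C(f,1)$ is automatic, coming purely from the fact that the restriction functor $\rest_f$ is a right adjoint and hence left exact. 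This is the profunctorial shadow of the familiar fact that comorphisms of sites induce geometric morphisms ``for free'', without any flatness condition.
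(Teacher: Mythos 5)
Your proof is correct and follows the same two-step structure as the paper's: cover-distribution is extracted from \cref{cover dist subsume cover pres and cover lift}, and $K$-flatness is obtained by exhibiting the sheafified classifying functor as flat and invoking \cref{K-flat distributor equivalences}. The only difference is that where the paper cites \cite{maclane&moerdijk}[VII.10 theorem 5] as a black box for the flatness of $\mathfrak{a}_K\C[f,1]$, you inline the argument — $\lext_{\C(f,1)}\simeq\rest_f$ is a right adjoint, hence limit-preserving, and postcomposition with the lex $\mathfrak{a}_K$ gives lexness — which is essentially the content of that citation and has the merit of making explicit that this half uses no hypothesis on $f$ at all.
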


\begin{proof}
    We saw at \cref{cover dist subsume cover pres and cover lift} that in this case $ \C[f,1]$ is cover-distirbuting from $J$ to $K$; but moreover, from \cite{maclane&moerdijk}[VII.10 theorem 5] we know that $ \mathfrak{a}_K\C[f,1]$ is also flat: hence by \cref{K-flat distributor equivalences} we know that $ \C[f,1]$ is $K$-flat as a distributor. 
\end{proof}

\begin{theorem}
The left and right representable construction produce factorizations of $\Sh : (\Site^{\flat})^{\op} \rightarrow \Top$ and $C : (\Site^\sharp)^{\co} \rightarrow \Top$ through the functor $ \Sh : \Site^{\mustrill} \rightarrow \Top$ as depicted below:
\[\begin{tikzcd}
	{(\Site^\flat)^{\op}} & {(\Site^{\mustrill})^{\op}} & {(\Site^\sharp)^{\co}} \\
	& \Top
	\arrow["R", from=1-1, to=1-2]
	\arrow["\Sh"', from=1-1, to=2-2]
	\arrow["{\widehat{(-)}}", from=1-2, to=2-2]
	\arrow["L"', from=1-3, to=1-2]
	\arrow["C", from=1-3, to=2-2]
\end{tikzcd}\]
\end{theorem}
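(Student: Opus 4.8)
The plan is to assemble $R$ and $L$ from the two lemmas just proved together with the standard pseudofunctoriality of the (co)representable embeddings $\Cat \to \Dist$, and then to read off the two triangular identities from the formulas $\lext_{\D[1,f]} = \lext_f$ and $\lext_{\C[f,1]} = \rest_f$ of \cref{Extensions of representable distributors} and the description of $\Sh$ on $\Site^{\mustrill}$ coming from \cref{functoriality for distributor of sites}.

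First I would define $R$ to be the identity on objects, to send a morphism of sites $f : (\C,J) \to (\D,K)$ to $R(f) = \D[1,f]$, and a natural transformation $\alpha : f \Rightarrow f'$ to $\D[1,\alpha] : \D[1,f] \Rightarrow \D[1,f']$. The preceding lemma shows $R(f)$ is a distributor of sites, and since $\Site^{\mustrill}$ is a locally full sub-bicategory of $\Dist$, a transformation of distributors between distributors of sites is the same thing as a $2$-cell in $\Site^{\mustrill}$. For the structural isomorphisms I would invoke the co-Yoneda identifications $\E[1,gf] \simeq \E[1,g] \otimes \D[1,f]$ and $\C[1,1_\C] \simeq \hom_\C$, which are precisely the compositor and unitor exhibiting $f \mapsto \D[1,f]$ as the canonical pseudofunctor $\Cat \to \Dist$; as every distributor appearing is a distributor of sites, these isomorphisms already live in $\Site^{\mustrill}$, and coherence is inherited. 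The one thing to record is the variance: $\D[1,f] : (\C,J) \looparrowright (\D,K)$ runs the same way as $f$, so $R$ reverses $1$-cells and preserves $2$-cells, whence $R : (\Site^\flat)^{\op} \to (\Site^{\mustrill})^{\op}$. I would then do the same for $L$: identity on objects, a comorphism of sites $f : (\D,K) \to (\C,J)$ to $L(f) = \C[f,1] : (\C,J) \looparrowright (\D,K)$ (a distributor of sites by the second lemma), and $\alpha : f \Rightarrow f'$ to $\C[\alpha,1] : \C[f',1] \Rightarrow \C[f,1]$, with structural isomorphisms $\C[fg,1] \simeq \D[g,1] \otimes \C[f,1]$ and $\C[1_\C,1] \simeq \hom_\C$. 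Here the induced $2$-cell runs backwards — the corepresentable is contravariant in natural transformations — which is exactly why the source must be $(\Site^\sharp)^{\co}$.

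Next I would identify the composites with the known functors. On objects $\widehat{(-)} \circ R$, $\widehat{(-)} \circ L$, $\Sh$ and $C$ all send $(\C,J)$ to $\widehat{\C}_J$. On a morphism of sites $f$, using \cref{functoriality for distributor of sites} and then \cref{Extensions of representable distributors},
\[
\Sh(R(f))^* \;\simeq\; \mathfrak{a}_K\,\lext_{\D[1,f]}\,\mathfrak{i}_J \;=\; \mathfrak{a}_K\,\lext_f\,\mathfrak{i}_J,
\]
which is the inverse image of the geometric morphism $\Sh(f)$ attached to the morphism of sites $f$ by Diaconescu's theorem; and on a comorphism of sites $f : (\D,K) \to (\C,J)$,
\[
\Sh(L(f))^* \;\simeq\; \mathfrak{a}_K\,\lext_{\C[f,1]}\,\mathfrak{i}_J \;=\; \mathfrak{a}_K\,\rest_f\,\mathfrak{i}_J,
\]
the inverse image of the geometric morphism $C(f)$ induced by the comorphism $f$. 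Compatibility with composition is then forced: under these identifications the compositor of $\widehat{(-)} \circ R$ built in the previous paragraph is carried to that of $\Sh$ (and similarly for $L$ and $C$), since both come from the same co-Yoneda isomorphisms and the two formulas of \cref{Extensions of representable distributors}; compatibility with $2$-cells follows because $\D[1,\alpha]$ (resp.\ $\C[\alpha,1]$) induces on the corresponding left extension functor exactly the canonical $2$-cell between the $\lext_f$'s (resp.\ between the $\rest_f$'s) recalled in the functoriality of extensions and restrictions above.

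The genuine content of the statement has already been carried out: a (co)morphism of sites gives a distributor of sites, $\Sh$ is pseudofunctorial on $\Site^{\mustrill}$, and the left extension of a (co)representable distributor is the extension (resp.\ restriction) functor. Consequently the only step demanding real care is keeping the three variance conventions straight — $\op$ on $\Site^\flat$ and on $\Site^{\mustrill}$ because heteromorphism distributors reverse $1$-cells, $\co$ on $\Site^\sharp$ because the corepresentable construction is contravariant on natural transformations — so that both triangles commute on the nose rather than up to a variance mismatch; beyond that I expect no substantive obstacle.
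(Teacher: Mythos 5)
Your proposal is correct and follows essentially the same route as the paper: the $1$-cell level is delegated to the two preceding lemmas, the identification of the composites rests on $\lext_{\D[1,f]}=\lext_f$ and $\lext_{\C[f,1]}=\rest_f$, and the only point the paper spells out is the (co)variance of $\D[1,\alpha]$ and $\D[\alpha,1]$ on $2$-cells, which you also handle. (One slip of the pen: since $\D[1,f]$ runs the same way as $f$, the functor $R$ \emph{preserves} rather than reverses the direction of $1$-cells, which is exactly why it is written between the two $\op$'s; your conclusion is nevertheless the right one.)
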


\begin{proof}
    We have to show that the processes described in the two lemma above are pseudofunctorial relative to transformation between (co)morphisms of sites. A natural transformation $ \phi : f \Rightarrow g$ induces in a covariant way a transformation $ \D[1,\phi] : \D[1,f] \Rightarrow \D[1,g]$ between the corresponding distributors, from which we can recover the inverse image part of a geometric transformation $ \Sh(\phi)$; similarly, it induces in a contravariant way a transformation of distributors $ \D[\phi, 1] : \D[g,1] \Rightarrow \D[f,1]$, from which one can recover the inverse image part of a geometric transformation $C_\phi :  C_g \Rightarrow C_f$. 
\end{proof}

\begin{example}
    We discuss here an example from categorical logic. Consider two small pretopoi $ \C,\D $; then from \cite{MAKKAIduality} and \cite{UltraLurie} one has an equivalence of categories 
    \[  \preTop[\C,\D] \simeq \Coh\Top[\Sh(\D,J_\Coh ), \Sh(\C,J_\Coh ) ] \]
    where $ \Coh\Top$ is the category of coherent geometric morphisms, those $f : \Sh(\D,J_\Coh) \rightarrow \Sh(\C,J_\Coh)  $ whose inverse image $ f^*$ preserves coherent objects. However not all geometric morphism between the associated coherent topoi needs to be coherent, that is, to be induced by a morphism of pretopoi. Take an arbitrary geometric morphism $ f $ as above; then define the following distributor $ H_f : \C \looparrowright \D$ with $ H_f(d,c) = f^*(\hirayo_c)(d)$. Then $H_f$ is flat and cover-distributing relative to the coherent topologies. Equivalently this means that for any finite diagram $(c_i)_{i \in I}$ in $\C$, one has an isomorphism of in $ \widehat{\D}_K$
    \[ \widehat{H}_f(\underset{i \in I}{\colim} \, c_i) \simeq \underset{i \in I}{\colim}  \, \widehat{H}_f(c_i)  \]
    Let us call continuous ditributors with this property \emph{coherent distributors}. Then one has an equivalence of category 
    \[  \Coh\Dist[\C,\D] \simeq \Top[\Sh(\D,J_\Coh ), \Sh(\C,J_\Coh )]  \]
\end{example}

\begin{remark}
    This latter example is the categorified analog of \cite{AbramskyJung}[definition 7.2.24] where is introduced the notion of \emph{join-approximable relation} between distributive lattices: those are relations $ H \rightarrowtail C \times D$ with $C,D$ distributive lattices such that \begin{enumerate}
        \item for all $ c\leq c' \in C$ and $ d' \leq d$ in $D$, if one has $ H(y,x) $ then $H(x',y')$;
        \item for $c \in C$ and $(d_i)_{i \in I}$ a finite family in $D$, one has $ H(\bigvee_{i \in I} d_i, c) = \bigwedge_{i \in I} H(d_i, c) $ as 2-valued statement;
        \item for $ (c_i)_{i \in I}$ a finite family in $C$ and $d$ in $\D$, one has $ H(d, \bigwedge_{i \in I} c_i) = \bigwedge_{i \in I} H(d,c_i)$
        \item for $ (c_i)_{i \in I}$ a finite family in $C$ and $d \in D$, if one has $ H(d, \bigvee_{i \in I} c_i )$ then there is a finite decomposition $ d = \bigvee_{j \in J} d_j$ such that for each $j \in J$ there is $i \in I$ such that one has $ H(d_j,c_i)$.
    \end{enumerate}
    Then it is established at \cite{AbramskyJung}[theorem 7.2.26] that the category of distributive lattices and join-approximable relations is dually equivalent to the category of coherent spaces and \emph{all continuous maps} between them:
    \[ (\DLat_{\bigvee})^{\op} \simeq \Coh_{\Cont}  \]
    But each of the conditions in this definition corresponds obviously with one of the conditions in the definition of a continuous distributor of sites between coherent sites (where covers are finite jointly epimorphic families), and the duality result is analogous to the correspondence relating coherent distributors between pretopoi and arbitrary geometric morphisms between the associated coherent topoi.
\end{remark}

We end this subsection with an observation regarding geometric surjections. We defined cover-distributivity as a condition of joint preservation of cover along heteromorphisms; but there is a dual condition, with a reverse implication, which could be read as a joint cover reflection along heteromorphisms:

\begin{definition}
    A distributor $ H : \C \looparrowright \D$ between sites $ (\C,J), (\D,K)$ is said to be \emph{cover-testing} if for any sieve $S$ in $\C$ on an object $c$, if one has for any $d$ in $\D$ and any $ x \in H(d,c)$ that $ x^*H[S]$ is in $K(d)$, then $S$ is in $J(c)$. 
\end{definition}

Now recall that for a morphism of sites $ f : (\C,J) \rightarrow (\D,K)$ is \emph{cover-reflecting} if for any $S$ over $c$, if $f[S]$ is in $K(f(c))$, then $ S$ has to be in $J(c)$. In \cite{caramello2020denseness}[theorem 6.3 (i)] it is established that a morphism of sites $f$ induces a geometric surjection $\Sh(f)$ if and only if $ f$ is cover-reflecting. Similarly we have:

\begin{proposition}
    Let $H : (\C,J) \looparrowright (\D,K)$ be a distributor of sites; then $ \Sh(H) : \widehat{\D}_K \rightarrow \widehat{\C}_J$ is a geometric surjection if and only if $H$ is cover-testing. 
\end{proposition}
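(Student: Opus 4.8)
The plan is to reduce the statement to the standard characterization of geometric surjections: $\Sh(H):\widehat{\D}_K\to\widehat{\C}_J$ is a surjection if and only if its inverse image $\Sh(H)^*=\mathfrak{a}_K\lext_H\mathfrak{i}_J$ is faithful (see \cite{elephant}). Since $\Sh(H)^*$ preserves finite limits and finite colimits, faithfulness is equivalent to conservativity, hence to the property that $\Sh(H)^*$ \emph{reflects maximal subobjects}: whenever $m:M\rightarrowtail E$ is a monomorphism in $\widehat{\C}_J$ with $\Sh(H)^*(m)$ invertible, $m$ is already invertible. (One direction is the equalizer argument — if $g,h:X\rightrightarrows Y$ have $\Sh(H)^*g=\Sh(H)^*h$, their equalizer $e$ has $\Sh(H)^*(e)$ invertible, hence $e$ is, hence $g=h$; the other is clear.) So the whole statement comes down to characterizing the $H$ for which $\Sh(H)^*$ reflects maximal subobjects.

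The second step is to cut this condition down to the separating family of representable sheaves $\{\mathfrak{a}_J\hirayo_c\mid c\in\C\}$, exactly as in the proof of \cite{caramello2020denseness}[theorem 6.3 (i)] for cover-reflecting morphisms. Given an arbitrary monomorphism $m:M\rightarrowtail E$ in $\widehat{\C}_J$, choose an epimorphic family $(\mathfrak{a}_J\hirayo_{c_i}\xrightarrow{g_i}E)_i$; since epimorphisms are stable under pullback in a topos, $m$ is invertible as soon as each pullback $g_i^{*}m\rightarrowtail\mathfrak{a}_J\hirayo_{c_i}$ is, and $\Sh(H)^*$ preserves pullbacks and sends the $g_i$ to an epimorphic family, so the same reasoning applies after applying $\Sh(H)^*$. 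Hence $\Sh(H)^*$ reflects maximal subobjects on all of $\widehat{\C}_J$ iff it does so on subobjects of the sheaves $\mathfrak{a}_J\hirayo_c$. Now a subobject of $\mathfrak{a}_J\hirayo_c$ is (the sheafification of) a sieve $S$ on $c$, and $\mathfrak{a}_J S\rightarrowtail\mathfrak{a}_J\hirayo_c$ is invertible precisely when $S$ is $J$-covering, by the discussion of dense monomorphisms into representables that precedes \cref{adjunction}.

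It remains to compute $\Sh(H)^*$ on such a subobject. By the factorization square of \cref{distributor of sites induce geomorph} we have $\Sh(H)^*\mathfrak{a}_J\simeq\mathfrak{a}_K\lext_H$, so $\Sh(H)^*$ applied to the monomorphism $\mathfrak{a}_J(S\hookrightarrow\hirayo_c)$ is $\mathfrak{a}_K(\lext_H S\to\widehat{H}(c))$. Using the image factorization $\lext_H S\twoheadrightarrow H[S]\rightarrowtail\widehat{H}(c)$ recalled in the division on left extensions of sieves, the exactness of $\mathfrak{a}_K$, and the fact that $\Sh(H)^*$ (being left exact) sends a monomorphism to a monomorphism, one gets $\Sh(H)^*(\mathfrak{a}_J S)\cong\mathfrak{a}_K(H[S])$ as subobjects of $\mathfrak{a}_K\widehat{H}(c)=\Sh(H)^*(\mathfrak{a}_J\hirayo_c)$. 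Thus $\Sh(H)^*(\mathfrak{a}_J S)$ is invertible iff the monomorphism $H[S]\rightarrowtail\widehat{H}(c)$ is $K$-dense, which — pulling back along each heteromorphism $x:\hirayo_d\to\widehat{H}(c)$, i.e. each $x\in H(d,c)$ — holds exactly when every $x^{*}H[S]$ is a $K$-covering sieve on $d$. Assembling the two reductions, "$\Sh(H)^*$ reflects maximal subobjects'' becomes precisely: for every sieve $S$ on every object $c$, if $x^{*}H[S]\in K(d)$ for all $d$ and all $x\in H(d,c)$, then $S\in J(c)$ — which is the definition of $H$ being cover-testing.

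The main obstacle is the passage from subobjects of the generators to arbitrary subobjects, i.e. making the epimorphic-family / pullback-stability argument fully precise, together with the bookkeeping that identifies the subobject $\Sh(H)^*(\mathfrak{a}_J S)$ with $\mathfrak{a}_K(H[S])$; once these are in place the equivalence is a direct unwinding of the definitions. One should also be mildly careful, if $J$ and $K$ are allowed to be mere coverages rather than Grothendieck topologies, to phrase "$J$-covering'' and "$K$-covering'' in terms of the generated topologies, which is harmless since the sheaf topoi and the cover-testing condition are insensitive to this replacement.
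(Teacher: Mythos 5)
Your proof is correct, but it follows a genuinely different route from the paper's. The paper reduces the statement to the known criterion that a (flat, continuous) functor into a sheaf topos induces a surjection if and only if it is cover-reflecting, in the spirit of \cite{caramello2020denseness}[theorem 6.3 (i)]: it shows directly that $H$ is cover-testing if and only if the classifying functor $\mathfrak{a}_K\widehat{H}\colon\C\to\widehat{\D}_K$ reflects covers, by unwinding what it means for $\coprod_i\widehat{H}(c_i)\to\widehat{H}(c)$ to be a local epimorphism in terms of the sieves $x^*H[S]$ (essentially \cref{cover dist implies send cover to epifam}). You instead argue from scratch: surjectivity via faithfulness/conservativity of $\Sh(H)^*$, reduction of conservativity to reflection of maximal subobjects, restriction to subobjects of the generators $\mathfrak{a}_J\hirayo_c$ by pullback-stability of epimorphic families, and the explicit identification $\Sh(H)^*(\mathfrak{a}_J S)\cong\mathfrak{a}_K(H[S])$ using the factorization square of \cref{distributor of sites induce geomorph} and exactness of sheafification. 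Your approach is longer but self-contained -- it effectively reproves the cover-reflection criterion in the distributor setting rather than importing it -- and the subobject computation $\Sh(H)^*(\mathfrak{a}_J S)\cong\mathfrak{a}_K(H[S])$ makes explicit a step that the paper's epimorphic-family formulation leaves implicit. The caveats you flag (generated topology versus mere coverage, and the bookkeeping in the generator reduction) are real but harmless, exactly as you say; no gap remains once they are spelled out.
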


\begin{proof}
    Suppose that $ H$ is cover-testing; we show that $ \widehat{H}$ is cover-reflecting: let be $ S$ on $c$ generated by a covering family $(u_i : c_i \rightarrow c)_{i \in I}$; suppose that it is sent by $\widehat{H}$ to a jointly epimorphic family in $\widehat{\D}_K$ so that one has a local epimorphism $ \coprod_{i \in I} \widehat{H}(c_i) \twoheadrightarrow \widehat{H}(c) $ in $\widehat{\D}$. Hence for any $d$ in $\D$ and any $x \in H(d,c)$, there is a $K$-covering sieve $ (v_j : d_j \rightarrow d)_{j \in J}$ such that for any $ j \in J$, there is some $i \in I$ together with a factorization as below 
\[\begin{tikzcd}
	d & c \\
	{d_j} & {c_i}
	\arrow["x", squiggly, from=1-1, to=1-2]
	\arrow["{v_j}", from=2-1, to=1-1]
	\arrow["{\exists x_j}"', squiggly, from=2-1, to=2-2]
	\arrow["{u_i}"', from=2-2, to=1-2]
\end{tikzcd}\]
    but this latter condition says that the $K$-covering sieve generated from $ (v_j : d_j \rightarrow d)_{j \in J}$ is contained in $ x^*H[S]$ which is hence $K$-covering: hence for any $x \in H(d,c)$, $x^*H[S]$ is $K$-covering, hence $S$ is $J$-covering by the cover-testing property of $H$. Conversely, if $\Sh(H)$ is a surjection, then $\widehat{H}$ is cover-reflecting, and a reverse argument ensures that $H$ is cover-testing. 
\end{proof}

\subsection{Equipment-like properties of $\Site^{\mustrill}$}

Let us conclude this section with a few formal categorical observations. The bicategory $\Dist$ is the ur-example of the notion of \emph{pro-arrow equipment} relative to $\Cat$, as defined in \cite{Woodproarrow}. This question also relates to the Yoneda structure of $\Cat$ through the classifying property of the free cocompletion relative to distributors. As we are going to see, some of this structure is inherited by sites, although the dichotomy between morphisms and comorphisms breaks some other parts of this structure. 

\begin{division}
     It is well known that the 2-category $ \Dist$ is the \emph{Kleisli} 2-category of the relative lax-idempotent monad $ \widehat{(-)} : \Cat \rightarrow \CAT$. Although we are not going to enter in the detail of the technology underlying the notion of Kleisli 2-category of a 2-monad, let us examine how this statement refines for sites. While in absence of topology, the free cocompletion is only lax-idempotent and rises size issues, in the case of small generated sites, we actually can consider a true pseudomonad on the 2-category of sites together with morphisms of sites
    \[\begin{tikzcd}
	   {\Site^{\flat}} & {\Site^{\flat}}
	   \arrow["{\mathbb{T}}", from=1-1, to=1-2]
    \end{tikzcd}\]
    sending a site $ (\C,J)$ to the canonical site $ (\widehat{\C}_J, J_{can})$, which is again small generated. Hence, using that $J$-continuous functors $ \C \rightarrow \widehat{\D}_K$ are the same as morphisms of sites $ (\C,J) \rightarrow (\widehat{\D}_K, J_{can})$, we have
        \begin{align*}
        \Site^{\mustrill}_{\Cont}[(\C,J), (\D,K)] &\simeq \Cont_J[\C, \widehat{\D}_K] \\
        &\simeq \Site^{\flat}[(\C,J), (\widehat{\D}_K, J_{can})]
    \end{align*}
    which is exactly the property of the Kleisli 2-category of the pseudomonad $ \mathbb{T}$, for which we can state that
        \[ \Site^{\mustrill}_{\Cont} \simeq \mathbf{Kl}(\mathbb{T}) \]
        (Beware that we have to restrict to continuous distributors as sheaf topoi classify distributor of sites only up to sheafification.) This consideration hints at a possibility to examine analogs of the Yoneda structure of $\Cat$ in the context of sites, which will be the topic of a future work. 
\end{division}

\begin{division}[Equipments]
    The wide inclusion $ \Cat \rightarrow \Dist$ sending $ f : \C \rightarrow \D$ to the representable $ \D(1,f) : \C \looparrowright \D$ exhibits $ \Dist$ as an \emph{equipment}, in the sense of \cite{Woodproarrow}. Recall that for $ \mathcal{K}$ be a bicategory; then a pseudofunctor $ (-)_* : \mathcal{K} \rightarrow \mathcal{M}$ is said to \emph{equip $\mathcal{K}$ with proarrows} if \begin{enumerate}
        \item $(-)_*$ is a bijection on objects
        \item $ (-)_* $ is locally fully faithful
        \item for any arrow $f$ in $ \mathcal{K}$, $ f_*$ admits a right adjoint $ f^*$ in $\mathcal{M}$. 
    \end{enumerate}

    The main example is the inclusion $ \Cat \rightarrow \Dist$ sending $ f : \C \rightarrow \D$ to the representable distributor $ \D[1,f] : \C \looparrowright \D$, which we saw at \cref{representable distributors} to be left adjoint to $ \D[f,1] : \D \looparrowright \C$. Similarly, as established in \cite{woodtopos198471}, there is an equipment $ \Top \rightarrow \Top_\Lex^{\co} $ where $ \Top_\Lex$ is the bicategory of topoi with lex functors, sending $ f$ to its direct image part $f_* $: then $f^*$, as a left adjoint to $f_* $, is right adjoint in $\Top_\Lex^{\co}  $. 
\end{division}

\begin{division}
    Consisting of a bicategory of distributors, it is natural to expect from $ \Site^{\mustrill}$ to retain some of this structure relative to $ \Site^\flat$. Indeed the inclusion $ \Site^\flat \rightarrow \Site^{\mustrill}$ sending a morphism of sites $ f : (\C,J) \rightarrow (\D,K)$ to the representable $ \D[1,f] : (\C,J) \looparrowright (\D,K)$ is bijective on objects and locally fully faithful; however we see that the third axiom fails: for a morphism of site, there is no reason for the corepresentable $ \D[f,1] $ to be neither cover-flat nor cover-distributive, for which this right adjoint does not exist in $\Site^{\mustrill}$. On the other hand, we have a dual embedding $ (\Site^\sharp)^{\co} \rightarrow (\Site^{\mustrill})^{\op}$ sending a comorphism $f : (\D,K) \rightarrow (\C,J) $ to the corepresentable $ \D[f,1] : \C \looparrowright \D $, but again the candidate to be a right adjoint $ \D[1,f]$ fails to be a distributor of sites. However if $f$ happens to be both a morphism and a comorphism, then the adjunction $ \D[1,f] \simeq \D[f,1]$ exists in $ \Site^{\mustrill}$. However, for functors are seldom morphisms and comorphism of sites at once, we conjecture that a more relevant approach will be to relax the third axiom to study the statute of $ \Site^{\mustrill}$ to the pair $(\Site^\flat, \Site^\sharp) $.  
\end{division}   

Although the third axiom fails, some further requirements examined at \cite{Woodproarrow}[Axiom (C)] are still available. It is well known that distributors in $\Cat$ correspond with \emph{codiscrete cofibrations}, given by their gluing. Similarly, it appears that distributors of sites admit gluing and those gluings sit both in $\Site^\flat$ and $\Site^\sharp$.

\begin{division}[Gluing of a distributor]
    Let $H : \C \looparrowright \D$ be a distributor; then one can consider its \emph{gluing}, which is the category $ \Gl(H) $ having as objects the coproduct $ \Ob(\D) + \Ob(\C)$ and as morphism\begin{itemize}
        \item $\Gl(H)[(0,d),(0,d')] = \D[d,d']$
        \item $\Gl(H)[(1,c), (1,c')] = \C[c,c']$
        \item $\Gl(H)[d,c] = H(d,c)$ 
        \item $\Gl(H)[c,d] = \emptyset$
    \end{itemize}
    with composition provided by the functoriality of $H$ in the left and right variable, and identities provided by those of $\D$ and $\C$. Denote as $ \iota_{\C}$ and $\iota_{\D}$ the obvious inclusions. Those data are related in $ \Dist$ by a 2-cell
\[\begin{tikzcd}
	\C && \D \\
	& {\Gl(H)}
	\arrow["H", from=1-1, to=1-3]
	\arrow[""{name=0, anchor=center, inner sep=0}, "{\iota_\C}"', from=1-1, to=2-2]
	\arrow[""{name=1, anchor=center, inner sep=0}, "{\iota_\D}", from=1-3, to=2-2]
	\arrow["{\phi_H}"', between={0.2}{0.8}, Rightarrow, from=1, to=0]
\end{tikzcd}\]
    which is the transformation of distributors $ \Gl(H)[1, \iota_\D] \otimes H \Rightarrow \Gl(H)[1, \iota_\C]$ with component\begin{itemize}
        \item at $((0,d), c)$ given by the arrow 
    \[ \int^{d' \in \D} H(d',c) \times \Gl(H)[(0,d), (0,d')] \rightarrow \Gl(H)[(0,d), (1,c)] \]
    sending $[(x,v)]_\sim$ with $ x \in H(d',c)$ and $ v: d \rightarrow d'$ to the formal composite $ x(0,v)$ in $\Gl(H)$ 
        \item at $ ((1,c'),c)$ given by the initial map $ ! : \emptyset \rightarrow  \Gl[(1,c'), (1,c)]$ as each set of the form $ \Gl[(1,c'), (0,d')]$ is empty. 
    \end{itemize} 
    Then if $\C$ and $\D$ are endowed with topologies $J$ and $K$, one can equip $ \Gl(H)$ with a topology $J_H$ defined as follows:\begin{itemize}
        \item $J_H(0,d)$ is generated from sieves of the form $\iota_{\D}[R]$ in $K(d)$ together with those of the form $ x^*H[S]$ for $ S$ in $J(c)$ and $ x \in H(d,c)$: those sieves coincide with the pullback sieves $x^*\iota_{\C}[S]$ seein $x$ as an arrow $ x : (0,d) \rightarrow (1,c)$;
        \item $J_H(1,c) = J(c)$. 
    \end{itemize} 
    This topology makes both $ \iota_{\C}$ and $ \iota_{\D}$ trivially cover-preserving; moreover, $ \iota_{\C}$ is also cover-lifting, while $ \iota_\D$ is also trivially cover-flat relative to $J_H$; moreover, if $H$ is cover-distributing, then each $x^*H[S]$ is already in $K(d)$, for which in fact $ \iota_{\D}$ also becomes cover-lifting. If in addition $H$ is $K$-flat, then it is immediate that $ \iota_\C$ also becomes covering-flat for $ J_H$: this sums up as the following proposition:
\end{division}

\begin{proposition}\label{gluing}
    Let $ H : (\C,J) \looparrowright (\D,K)$ be a distributor of sites; then the inclusions $ \iota_{\C}$, $ \iota_{\D}$ are both morphisms and comorphisms of sites into $ (\Gl(H), J_H)$. Moreover, for a 2-cell as below 
\[\begin{tikzcd}
	\C && \D \\
	& \B
	\arrow["H", from=1-1, to=1-3]
	\arrow[""{name=0, anchor=center, inner sep=0}, "p"', from=1-1, to=2-2]
	\arrow[""{name=1, anchor=center, inner sep=0}, "q", from=1-3, to=2-2]
	\arrow["\phi"', between={0.2}{0.8}, Rightarrow, from=1, to=0]
\end{tikzcd}\]
    with $ (\B,L)$ a site and $p$, $ q$ morphisms of sites (resp. comorphisms of sites), then the unique functor $ \langle \phi \rangle : \Gl(H) \rightarrow \B$ such that $ \langle \phi \rangle * \phi_H = \phi$ is a morphism of sites (resp. a comorphism of sites). 
\end{proposition}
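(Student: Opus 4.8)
The plan is to verify for $\langle\phi\rangle$ directly the defining conditions of a (co)morphism of sites, using the explicit description of $\Gl(H)$ and of $J_{H}$. The first observation is that, $H$ being a distributor of sites, the topology $J_{H}$ collapses: by cover-distributivity each generator $x^{*}H[S]$ of $J_{H}$ at $(0,d)$ already lies in $K(d)$, so $J_{H}(0,d)=K(d)$ once sieves on $(0,d)$ are identified with sieves on $d$ in $\D$ (there being no non-trivial arrows into $(0,d)$ from the $\C$-part), while $J_{H}(1,c)=J(c)$, a generating $J_{H}$-cover of $(1,c)$ being obtained from a $J$-cover $S$ of $c$ by adjoining the heteromorphisms factoring through $S$. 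Granting this, the first assertion of the proposition is exactly the content of the paragraph preceding the statement. For the second assertion I would recall that a functor $\Gl(H)\to\B$ restricting to $p$ on $\C$ and to $q$ on $\D$ is the same datum as a transformation of distributors $\B[1,q]\otimes H\Rightarrow\B[1,p]$ --- the usual collage/cograph universal property --- which furnishes the existence and uniqueness of $\langle\phi\rangle$, with $\langle\phi\rangle\iota_{\C}=p$, $\langle\phi\rangle\iota_{\D}=q$, and $\langle\phi\rangle$ carrying a heteromorphism $x\in H(d,c)$ to the corresponding component of $\phi$ in $\B[q(d),p(c)]$.

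For the comorphism case I would simply compute preimages of covering sieves. If $T$ is an $L$-covering sieve on $\langle\phi\rangle(0,d)=q(d)$, then, as every arrow of $\Gl(H)$ into $(0,d)$ comes from a $\D$-object, $\langle\phi\rangle^{-1}(T)=q^{-1}(T)$, which is $K$-covering on $d$ since $q$ is cover-lifting, hence lies in $J_{H}(0,d)=K(d)$. If $T$ is an $L$-covering sieve on $\langle\phi\rangle(1,c)=p(c)$, then $\langle\phi\rangle^{-1}(T)$ contains the $J_{H}$-covering sieve on $(1,c)$ generated by $p^{-1}(T)=\{u\colon c'\to c\mid p(u)\in T\}$ --- a heteromorphism into $(1,c)$ factoring through such a $u$ has $\langle\phi\rangle$-image of the form $p(u)\circ(-)$, which lies in the sieve $T$ --- and $p^{-1}(T)\in J(c)=J_{H}(1,c)$ as $p$ is cover-lifting. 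So $\langle\phi\rangle$ is cover-lifting; this case is purely formal, using only the cover-distributivity of $H$ (to identify $J_{H}$ at the $\D$-objects) and the cover-lifting property of $p$ and $q$.

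For the morphism case, cover-preservation is again read off on generators: $\langle\phi\rangle$ sends a generating $J_{H}$-cover of $(1,c)$ (induced by $S\in J(c)$) to a sieve containing the $L$-covering family $\{p(u)\mid u\in S\}$, and a $J_{H}$-cover of $(0,d)$, being a $K$-cover, to a sieve containing the $L$-covering family obtained by applying $q$. The substantive point is covering-flatness, which I would establish by showing $\mathfrak{a}_{L}\hirayo_{\B}\langle\phi\rangle\colon\Gl(H)\to\widehat{\B}_{L}$ filtering in the sense of \cite{maclane&moerdijk}[VII.8]. The three filtering conditions are automatic for diagrams lying entirely in the full subcategory $\C$ (resp.\ entirely in $\D$), being then implied by covering-flatness of $p$ (resp.\ of $q$) and the fact that joint epimorphy is preserved under enlarging a family; the content is thus concentrated in the ``mixed'' instances, where one object is of the form $(1,c)$ and another of the form $(0,d)$ and the spans or parallel pairs relating them are witnessed by heteromorphisms of $H$. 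I expect these mixed conditions to be the main obstacle: the span condition at the pair $\bigl((1,c),(0,d)\bigr)$, for instance, amounts to demanding that for every generalized element $(\gamma_{0},\gamma_{1})$ of $\mathfrak{a}_{L}\hirayo_{\B}p(c)\times\mathfrak{a}_{L}\hirayo_{\B}q(d)$ the sieve of maps along which $\gamma_{1}$ lifts, $L$-locally, through some $q(v)$ with $v\colon d'\to d$ to a morphism $k$ satisfying $\gamma_{0}=\langle\phi\rangle(x')\circ k$ for some $x'\in H(d',c)$ is $L$-covering, and producing such a cover forces one to feed in simultaneously the $K$-flatness and cover-distributivity of $H$ (to supply and control the heteromorphisms $x'$) and the covering-flatness and cover-preservation of $q$ and $p$. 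Once the mixed conditions are in hand $\mathfrak{a}_{L}\hirayo_{\B}\langle\phi\rangle$ is filtering, hence flat, so $\langle\phi\rangle$ is covering-flat; together with cover-preservation this makes $\langle\phi\rangle$ a morphism of sites, which concludes the proof.
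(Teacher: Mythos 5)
Your handling of the comorphism half and of cover-preservation is correct and in fact more explicit than the paper's own proof, which merely asserts that $\langle\phi\rangle$ ``inherits'' the relevant properties from $p$ and $q$; your identification of $J_H(0,d)$ with $K(d)$ via cover-distributivity (using that no arrow enters $(0,d)$ from the $\C$-part) is also the right reading of the construction. For the first assertion you defer entirely to the preceding paragraph, whereas the paper does supply the one non-trivial step there, namely the covering-flatness of $\iota_\C$, by reducing via \cref{cover-flatness for distributor synthetic form} to cones of the form $(x_i : (0,d)\to (1,c_i))_{i\in I}$ and invoking $K$-flatness of $H$; you should include that argument rather than cite the paragraph.

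The genuine gap is exactly where you locate it: the covering-flatness of $\langle\phi\rangle$ in the morphism case. You declare the mixed filtering instances to be ``the main obstacle'' and list which hypotheses one would ``feed in'', but you never produce the required covers, so the proof is incomplete at its only substantive point --- and this step cannot in fact be completed from the stated hypotheses. Writing $P=\mathfrak{a}_L\hirayo_\B p$, $Q=\mathfrak{a}_L\hirayo_\B q$ and $g^*=\lan_{\hirayo_\D}Q$, the joint image in $P(c)\times Q(d)$ of the family indexed by spans $(0,d'')\to ((1,c),(0,d))$ is computed by a coend which, by lexness of $g^*$, is $g^*(\mathfrak{a}_K\widehat{H}(c))\times Q(d)$ mapping via $\alpha_c\times\mathrm{id}$, where $\alpha_c : g^*(\mathfrak{a}_K\widehat{H}(c))\to P(c)$ is induced by $\phi$; so the mixed instance of the second filtering condition holds precisely when each $\alpha_c$ is an epimorphism, which is an extra condition on $\phi$ rather than a consequence of $p,q$ being morphisms of sites and $H$ a distributor of sites. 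Concretely, take $\C=\D=\B=\mathbf{2}$ with trivial topologies, $H=\hom_{\mathbf{2}}$, $q=\mathrm{id}$, $p$ the functor constant at the terminal object, and $\phi$ corresponding to the unique transformation $q\Rightarrow p$: the objects $(1,0)$ and $(0,1)$ of $\Gl(H)$ have meet $(0,0)$, sent by $\langle\phi\rangle$ to $0$ while both $(1,0)$ and $(0,1)$ are sent to $1$, so $\lan_{\hirayo_{\Gl(H)}}(\hirayo_\B\langle\phi\rangle)$ fails to preserve the product $\hirayo_{(1,0)}\times\hirayo_{(0,1)}$ and $\langle\phi\rangle$ is not covering-flat. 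The paper's proof passes over this with ``it is immediate''; your instinct that the mixed conditions carry all the difficulty is sound, but the conclusion you are asked to reach requires either an additional local-surjectivity hypothesis on the components of $\phi$ or a weakening of the statement, and your argument as written establishes only the comorphism half of the universal property.
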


\begin{proof}
    From what precedes the gluing inclusions are both cover-preserving and lifting; they are hence comorphisms. Moreover, observe that $ \iota_\D$ is trivially cover-flat as all arrows left to $ \iota_{\D}$ come from $ \D$; on the other hand, in order to establish cover-flatness of $ \iota_\C$, it suffices to test relative to a cone of the form $(x_i : (0,d) \rightarrow (0,c_i)_{i \in I})$ for a finite diagram $(c_i)_{i \in I}$, but hence by cover-flatness of $H$ and \cref{cover-flatness for distributor synthetic form}, there is $R \in K(d)$ such that for any $v : d' \rightarrow d$ in $ R$ there exists a cone $ (u_{i} : c \rightarrow c_i)$ together with $ x' : d' \rightarrow c$ factorizing $ x$, but such local cone provides the desired data in $ \Gl(H)$. Hence both $\iota_\C, \iota_\D$ are simultaneously morphisms and comorphisms of sites.  

    Regarding the universal functor $ \langle \phi \rangle $, it sends a pair $ (0,d)$ to $ q(d)$, $ (1,c) $ to $ p(c)$ and acts accordingly on arrows on the left and right components of $ \Gl(H)$, while for a formal arrow $ x : (0,d) \rightarrow (1,c)$ we have
    \[ \langle \phi \rangle (x) = \phi_{(q(d),c)}([(x, \id_{q(d)})]_\sim)  \]
    where $\phi_{(q(d),c)}$ is the component of the transformation $ \phi : \B[1,q] \otimes H \Rightarrow \B[1,p]$. By definition, $ \langle \phi \rangle $ acts respectively on the two classes of objects $ (0,d)$ and $(1,c)$ as $q$ and $p$, and it is immediate that it inherits the property of being a morphism of sites (resp. a comorphism of sites) if both $p$ and $q$ share it. 
\end{proof} 

\begin{remark}
    This lemma says that the functors involved in the \emph{cotabulators} of a distributor that is a distributor of sites are both morphisms and comorphisms and are universal relative both to morphisms and to comorphisms.    
\end{remark}

\begin{remark}
    We can also consider the dual construction of the \emph{graph} $\Gr(H)$ of a distributor $H$, and in a way analogous to our observations regarding comma construction in the double category $ \Site^\natural$ of \cite{caramello2025morphisms}[subsection 2.4], the projections can be shown to inherit a part of the properties of $H$, but in this case the correspondence is less perfect than for the gluing construction. 
\end{remark}

\begin{division}
    Amongst further conditions for equipments, one also often includes the condition of local completeness and cocompleteness; this is for instance a property of $\Dist $ (see for instance \cite{benabou1973distributeurs}[proposition 2.3.4], as each homcategory $ \Dist[\C,\D]$ is both complete and cocomplete. However for distributors of sites, the situation is more delicate: the category $ \Site^{\mustrill}[(\C,J), (\D,K)]$ needs not be complete nor cocomplete in general ! However, one has the following result:
\end{division}

\begin{proposition}
    For any $(\C,J)$, $ (\D,K)$, $\Site^{\mustrill}_\Cont[(\C,J), (\D,K)] $ has filtered colimits. 
\end{proposition}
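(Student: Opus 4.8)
The plan is to transport the problem along the equivalence underlying \cref{continuous distributors of sites corresponds with geometric morphisms}, which identifies $\Site^{\mustrill}_{\Cont}[(\C,J), (\D,K)]$ with the full subcategory $\Cont_J[\C, \widehat{\D}_K] \subseteq [\C, \widehat{\D}_K]$ of those functors $F : \C \to \widehat{\D}_K$ that are flat and send $J$-covers to epimorphic families in $\widehat{\D}_K$ (these are exactly the functors classifying inverse images of geometric morphisms $\widehat{\D}_K \to \widehat{\C}_J$, by Diaconescu). Since $\C$ is small and $\widehat{\D}_K$ is cocomplete, the functor category $[\C, \widehat{\D}_K]$ has all colimits, computed pointwise; in particular a filtered colimit $F = \colim_{i} F_i$ of functors $\C \to \widehat{\D}_K$ is computed as the pointwise filtered colimit in the Grothendieck topos $\widehat{\D}_K$. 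It therefore suffices to verify that $\Cont_J[\C, \widehat{\D}_K]$ is closed under such filtered colimits, which amounts to two stability statements for the defining conditions.

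For flatness, I would use the \emph{filtering} reformulation of \cite{maclane&moerdijk}[VII.8--9] already invoked in the proof of \cref{K-flat distributor equivalences}: a functor into a Grothendieck topos is flat precisely when three explicit families, each assembled from finite products and equalizers of its values, are epimorphic. Applying $F$ to any of those finite diagrams yields the pointwise filtered colimit of the corresponding diagrams for the $F_i$, because filtered colimits commute with finite limits in any Grothendieck topos; and a filtered colimit of epimorphic families is again epimorphic (the colimit functor $[I, \widehat{\D}_K] \to \widehat{\D}_K$ along a small filtered $I$ is cocontinuous and, by the same exactness, left exact, hence preserves regular epimorphisms, which in a topos are all the epimorphisms). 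Thus the three families for $F$ are epimorphic and $F$ is flat. For cover-preservation, let $(u_\alpha : c_\alpha \to c)_\alpha$ generate a $J$-covering sieve; since coproducts commute with colimits we have $\coprod_\alpha F(c_\alpha) \cong \colim_{i} \coprod_\alpha F_i(c_\alpha)$, each transition map $\coprod_\alpha F_i(c_\alpha) \to F_i(c)$ is an epimorphism, and again a filtered colimit of epimorphisms is an epimorphism, so $\coprod_\alpha F(c_\alpha) \to F(c)$ is an epimorphism; that is, $F$ sends the cover to an epimorphic family.

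Putting these together, $\Cont_J[\C, \widehat{\D}_K]$ is closed under filtered colimits inside $[\C, \widehat{\D}_K]$, hence has filtered colimits, and transporting along the equivalence shows $\Site^{\mustrill}_{\Cont}[(\C,J), (\D,K)]$ has filtered colimits. The only point requiring care is the pair of stability statements above, whose common root is the exactness of filtered colimits in the Grothendieck topos $\widehat{\D}_K$ together with the stability of (regular) epimorphisms under filtered colimits; both are standard, but one must keep in mind that the relevant colimits are taken in the sheaf topos and not in presheaves — which is also why these filtered colimits are in general \emph{not} computed in $\Dist[\C, \D]$, the pointwise colimit in $\widehat{\D}$ of a filtered diagram of $K$-sheaves failing to be a $K$-sheaf, so that one must sheafify.
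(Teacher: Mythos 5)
Your proof is correct, but it takes a more laborious route than the paper, which simply observes that under the equivalence $\Site^{\mustrill}_{\Cont}[(\C,J),(\D,K)] \simeq \Top[\widehat{\D}_K,\widehat{\C}_J]$ of \cref{continuous distributors of sites corresponds with geometric morphisms} the claim reduces to the standard fact that $\Top$ locally has filtered colimits, and stops there. What you do is essentially unfold the proof of that standard fact: you transport instead to $\Cont_J[\C,\widehat{\D}_K]$ (an equivalent category, by Diaconescu) and verify directly that flatness---via the filtering criterion of Mac Lane--Moerdijk already used in \cref{K-flat distributor equivalences}---and cover-preservation are stable under pointwise filtered colimits in $\widehat{\D}_K$, using commutation of filtered colimits with finite limits in a Grothendieck topos and preservation of epimorphisms by the (cocontinuous) colimit functor. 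Both arguments are sound; yours buys self-containedness and makes visible exactly where exactness of filtered colimits enters, while the paper's buys brevity by outsourcing to a known property of $\Top$. Two minor remarks: the appeal to left exactness of the colimit functor to preserve epimorphisms is superfluous, since any cocontinuous functor preserves cokernel pairs and hence epimorphisms; and your closing caveat that these colimits are not computed in $\Dist[\C,\D]$ but require sheafification is a worthwhile observation that the paper leaves implicit.
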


\begin{proof}
    From the equivalence $\Site^{\mustrill}_\Cont[(\C,J), (\D,K)] \simeq \Top[\widehat{\D}_K, \widehat{\C}_J]$ it is clear that $\Site^{\mustrill}_\Cont$ locally has filtered colimits since $\Top$ has. 
\end{proof}

\printbibliography

\end{document}